\newcommand{\citecomment}[2][]{\citen{#2}#1\citevar}
\newcommand{\citeone}[1]{\citecomment{#1}}
\newcommand{\citetwo}[2][]{\citecomment[,~#1]{#2}}
\newcommand{\citevar}{\@ifnextchar\bgroup{;~\citeone}{\@ifnextchar[{;~\citetwo}{]}}}
\newcommand{\citefirst}{\@ifnextchar\bgroup{\citeone}{\@ifnextchar[{\citetwo}{]}}}
\newcommand\hrefdefaultfont{\ttfamily}
\xpatchcmd\href{\setkeys{href}{#1}}{\setkeys{href}{font=\hrefdefaultfont,#1}}{}{\fail}
\renewcommand*{\backref}[1]{}
\renewcommand*{\backrefalt}[4]{
  \ifcase #1 
  [No citations.]
  \or [#2]
  \else [#2]
  \fi }
\let\originalleft\left
\let\originalright\right
\renewcommand{\left}{\mathopen{}\mathclose\bgroup\originalleft}
\renewcommand{\right}{\aftergroup\egroup\originalright}
\newcommand{\calF}{\mathcal{F}}
\newcommand{\calG}{\mathcal{G}}
\newcommand{\calH}{\mathcal{H}}
\newcommand{\calL}{\mathcal{L}}
\newcommand{\calN}{\mathcal{N}}
\newcommand{\calT}{\mathcal{T}}
\newcommand{\TT}{\mathbb{T}}
\newcommand{\from}{\colon} 
\newcommand{\cross}{\times}
\newcommand{\cover}[1]{{\widetilde{#1}}}
\newcommand{\bdy}{\partial} 
\newcommand{\interior}{{\operatorname{interior}}}
\theoremstyle{plain}
\newtheorem{XXXtheoremQED}[equation]{Theorem} 
  {\pushQED{\qed}\begin{XXXtheoremQED}}
  {\popQED\end{XXXtheoremQED}}
\newcommand{\fakeenv}{} 
\newenvironment{restate}[2]  
{ 
 \renewcommand{\fakeenv}{#2} 
 \theoremstyle{plain} 
 \newtheorem*{\fakeenv}{#1~\ref{#2}} 
 \begin{\fakeenv}
}
{
 \end{\fakeenv}
}
\newenvironment{restated}[2]  
{ 
 \renewcommand{\fakeenv}{#2} 
 \theoremstyle{definition} 
 \newtheorem*{\fakeenv}{#1~\ref{#2}} 
 \begin{\fakeenv}
}
{
 \end{\fakeenv}
}
\newcommand{\Nature}{\operatorname{\mathsf{Nat}}}
\newcommand{\Visited}{\operatorname{\mathsf{Vis}}}
\newcommand{\phidelta}{\delta}  
\newcommand{\phif}{f} 
\title[Connecting essential triangulations]{Connecting essential triangulations II: \\ via 2-3 moves only}
\author[Kalelkar, Schleimer, Segerman]{Tejas Kalelkar, Saul Schleimer, and Henry Segerman} 
\date{\today}
\begin{document}

\begin{abstract}
In previous work we showed that for a manifold $M$, whose universal cover has infinitely many boundary components, the set of essential ideal triangulations of $M$ is connected via 2-3, 3-2, 0-2, and 2-0 moves.
Here we show that this set is also connected via 2-3 and 3-2 moves alone, if we ignore those triangulations for which no 2-3 move preserves essentiality.
If we also allow V-moves and their inverses then the full set of essential ideal triangulations of $M$ is once again connected.
These results also hold if we replace essential triangulations with $L$--essential triangulations.
\end{abstract}




\maketitle

\section{Introduction}

\subsection{Graphs of triangulations}

Combinatorial moves on triangulations have been studied extensively. 
A particularly elegant formulation was given by Pachner~\cite{Pachner78}, who showed that bistellar moves connect any two triangulations of a given manifold.
For a three-dimensional manifold $M$, these are the 2-3, 3-2, 1-4, and 4-1 moves.
If we do not want to change the number of vertices, we may restrict ourselves to the former two moves by the work of 
Matveev~\cite[Theorem~1.2.5]{Matveev07}, Piergallini~\cite[Theorem~1.2]{Piergallini88}, and Amendola~\cite[Theorem~2.1]{Amendola05}.
To state their results, let $\TT(M)$ be the set of one-vertex triangulations (if $M$ is closed) \emph{or} the set of ideal triangulations (if $M$ has boundary).
Let $\TT_2(M)$ be those triangulations of $\TT(M)$ having at least two tetrahedra.

\begin{figure}[htbp]
\includegraphics[width = \textwidth]{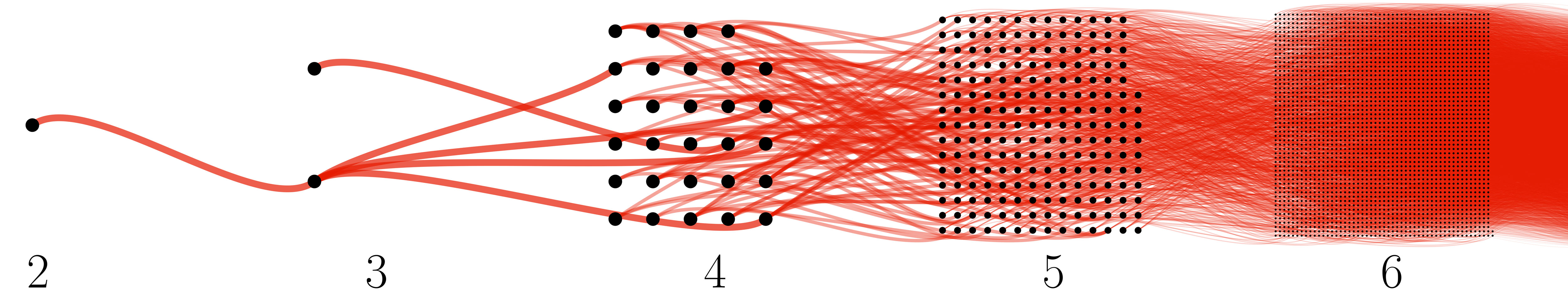}
\caption{Part of the graph having $\TT_2(S^2 \cross S^1)$ as vertices (triangulations arranged by number of tetrahedra) and 2-3 moves as edges.  
This and the following graphs were generated using Regina~\cite{regina}.}
\label{Fig:AllTri}
\end{figure}

\begin{theorem}[Matveev--Piergallini--Amendola]
\label{Thm:MPA}
Suppose that $M$ is a three-manifold.
Then $\TT_2(M)$ is connected via 2-3 and 3-2 moves. \qed
\end{theorem}


We illustrate this for $S^2 \times S^1$ in \reffig{AllTri}.
An \emph{essential} ideal triangulation has the property that none of its edges are homotopic, relative to their endpoints, into $\bdy M$.
(When $M$ is closed, we require instead that none of its edges are null-homotopic. Equivalently, by removing regular open neighbourhoods of the vertices, we may convert material triangulations into ideal ones.)
Essentiality is required in many applications.
For example, an ideal triangulation of a cusped hyperbolic three-manifold is essential if and only if it admits a solution to Thurston's gluing equations~\cite[Theorem~1]{SegermanTillmann11}.
(Here we do not assume that all tetrahedra are positively oriented.)
Thus the 1-loop invariant of Dimofte and Garoufalidis~\cite[Definition~1.2]{DimofteGaroufalidis13} is only defined on essential triangulations, as is the Bloch invariant~\cite[Definition~2.5]{NeumannYang99}.
Moreover, a triangulation admitting a strict angle structure or a taut angle structure is necessarily essential~\cite[Theorem~6.1]{HodgsonRubinsteinSegermanTillmann15}.
Unfortunately, the set of essential triangulations need not be connected via 2-3 and 3-2 moves.
This is the case for $S^2 \cross S^1$, as shown in \reffig{EssTriDisconn}.

\begin{figure}[htbp]
\includegraphics[width = \textwidth]{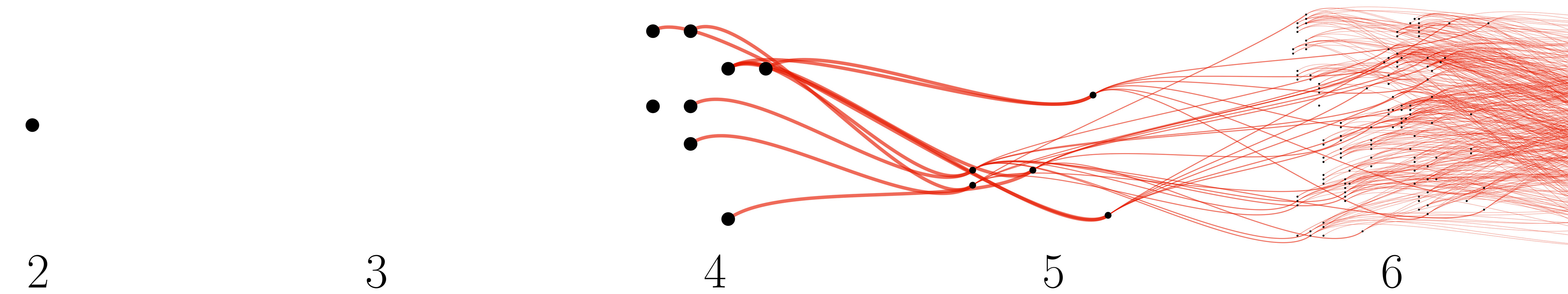}
\caption{The subgraph of \reffig{AllTri} spanned by essential triangulations.
For a discussion of the isolated essential triangulation with two tetrahedra, 
see \refexa{S2xS1}.}
\label{Fig:EssTriDisconn}
\end{figure}

In the prequel~\cite{KSS24a}  to this paper we prove a connectivity result similar to \refthm{MPA} but where the initial and terminal triangulations, as well as all intermediate triangulations, are essential. 
To do this, we use \emph{0-2 and 2-0 moves}; see \refsec{0-2}.
The resulting connected graph (again for $S^2 \cross S^1$) is shown in \reffig{EssTriConn}.

\begin{figure}[htbp]
\includegraphics[width = \textwidth]{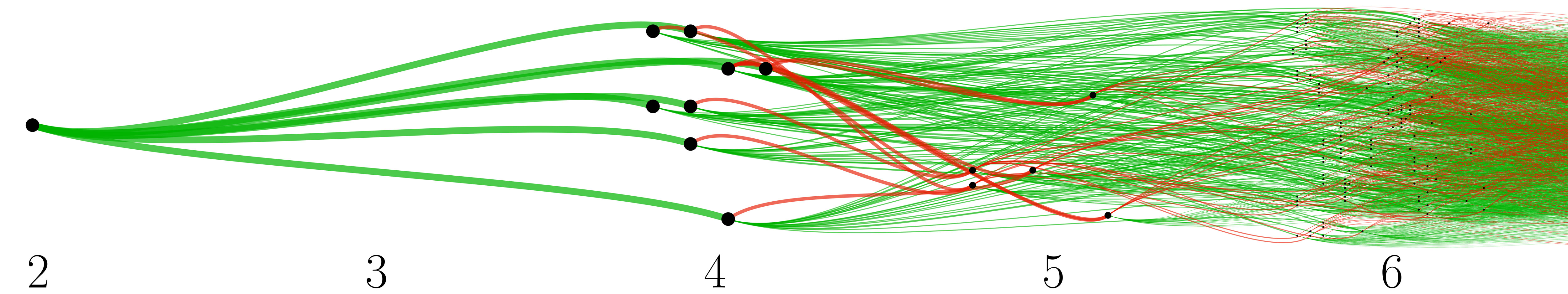}
\caption{The graph of \reffig{EssTriDisconn} with additional edges for 0-2 moves.}
\label{Fig:EssTriConn}
\end{figure}

Also in that paper we generalise to \emph{$L$--essential} triangulations.
These involve a choice of \emph{labelling} $L$ of the boundary components $\Delta_M$ of the universal cover of the manifold $M$. 
See \refdef{LEssential} for the details. 
Let $\TT(M, L)$ be those (necessarily ideal) triangulations in $\TT(M)$ which are $L$--essential.
The full result~\cite[Theorem~6.1]{KSS24a} is as follows. 

\begin{theorem}
\label{Thm:ConnectivityWith0-2}
Suppose that $M$ is a compact, connected three-manifold with boundary.
Suppose that $L$ is a labelling of $\Delta_M$ with infinite image.
Then $\TT(M, L)$
is connected via 2-3, 3-2, 0-2, and 2-0 moves. \qed
\end{theorem}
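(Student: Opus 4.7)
My plan is to deduce the theorem from Matveev--Piergallini--Amendola (\refthm{MatveevPiergallini}) by a detour argument. Given $T_0, T_1 \in \TT(M,L)$, I would first arrange for them to lie in $\TT_2(M)$ by a preliminary $2$-$3$ move if needed (with the one-tetrahedron case treated as a small base case). By \refthm{MatveevPiergallini} there is a sequence $T_0 = S_0, S_1, \ldots, S_n = T_1$ of $2$-$3$ and $3$-$2$ moves inside $\TT_2(M)$, which I must modify so that every intermediate triangulation lies in $\TT(M,L)$.

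The first observation is that $3$-$2$ and $2$-$0$ moves preserve $L$-essentiality: they only remove edges, and thus cannot create a new edge homotopic rel endpoints into $\bdy M$. The entire obstacle is therefore the $2$-$3$ moves, which introduce a new edge whose lift to $\widetilde{M}$ may have both endpoints on boundary components sharing the same $L$-label. The core of the proof is a rerouting lemma: if a $2$-$3$ move on $S \in \TT(M,L)$ produces a non-$L$-essential triangulation, then there is a finite sequence of $0$-$2$ moves on $S$, staying in $\TT(M,L)$, yielding $T' \in \TT(M,L)$ on which the analogous $2$-$3$ move produces an $L$-essential triangulation $S''$; then $S''$ collapses via $2$-$0$ moves, through $\TT(M,L)$, to the intended target $S_{i+1}$. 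The infinite image hypothesis on $L$ supplies the needed flexibility: the $0$-$2$ bubbles can be arranged so that the new edges introduced, and ultimately the central edge of the $2$-$3$ move, land on lifts of boundary components carrying labels distinct from the offending pair.

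The main obstacle is the rerouting lemma itself. Each $0$-$2$ move inserts its own new edge, and $L$-essentiality must be checked on \emph{every} lift of that edge to $\widetilde{M}$, not only on a preferred one. Since $L$ is equivariant under the deck action of $\pi_1(M)$ on $\Delta_M$, verifying essentiality is a globally constrained combinatorial condition: a seemingly safe local bubble choice could still connect same-labelled boundary components via a distant deck translate. The infinite image hypothesis is precisely the property that lets one dodge every forbidden label simultaneously, but translating this into an explicit local gadget, showing that the intermediate triangulations along the bubble/$2$-$3$/debubble sequence are all $L$-essential, and verifying that the gadget composes correctly across successive problematic moves in the Matveev--Piergallini--Amendola path, is where the technical work lies.
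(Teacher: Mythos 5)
Note first that the paper you are looking at does not prove \refthm{ConnectivityWith0-2} at all: it is imported verbatim from the first paper in the series \cite[Theorem~6.1]{KSS24a}, which is why it carries a \qed\ with no argument here. So your sketch has to be judged on its own terms, and it has a genuine structural gap. Your plan is to take the Matveev--Piergallini--Amendola path $T_0 = S_0, \ldots, S_n = T_1$ and repair each bad $2$-$3$ move $S_i \to S_{i+1}$ by a local detour (bubble with $0$-$2$ moves, do ``the analogous'' $2$-$3$ move, then collapse by $2$-$0$ moves ``through $\TT(M,L)$ to the intended target $S_{i+1}$''). But if the $2$-$3$ move $S_i \to S_{i+1}$ creates a non-$L$-essential edge, that edge is an edge \emph{of} $S_{i+1}$, so $S_{i+1} \notin \TT(M,L)$; no detour whose final triangulation is $S_{i+1}$ can stay inside $\TT(M,L)$, because its last term is outside. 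In other words, the MPA path passes through triangulations that are themselves not $L$-essential, and the correct strategy cannot return to the MPA path after each move --- it must build a parallel path that agrees with it only at the two ends (or abandon it entirely), which is a much more global piece of bookkeeping than a per-move rerouting lemma.

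The second problem is with the mechanism you propose for the rerouting itself. A $0$-$2$ move does not change the set of ideal vertices, nor the labels of the boundary components of $\cover{M}$; the new edge introduced by a $2$-$3$ move connects two specific ideal vertices in a homotopy class (rel endpoints) determined by the two tetrahedra being merged, and that is what decides whether its lifts join same-labelled components of $\Delta_M$. Bubbles inserted away from the site leave this homotopy class unchanged, so ``the analogous $2$-$3$ move'' produces exactly the same inessential edge; bubbles inserted at the site change the combinatorics so that the $2$-$3$ move you can now perform is a genuinely different move, whose result does not collapse by $2$-$0$ moves back to $S_{i+1}$. So the infinite-image hypothesis cannot be spent the way you describe (``arranging the central edge to land on differently labelled lifts''); what it actually buys, both in \cite{KSS24a} and in the present paper, is the existence of distant regions with fresh labels that can be grown toward a site to make moves available --- and exploiting that requires the kind of global transport machinery (here: augmented $2$-$3$ moves, nature reserves, parallel sequences) that occupies most of this paper even for the much smaller task of realising a single $0$-$2$ move through $L$-essential triangulations. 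Your first observation (that $3$-$2$ and $2$-$0$ moves cannot destroy $L$-essentiality since they only delete edges) is correct, but it does not rescue the overall scheme.
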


Let $\TT^\circ(M, L)$ be the triangulations in $\TT(M, L)$ which admit some 2-3 or 3-2 move preserving $L$--essentiality.
The main goal of this paper is to prove the following.

\begin{restate}{Theorem}{Thm:Connectivity}
Suppose that $M$ is a compact, connected three-manifold with boundary.
Suppose that $L$ is a labelling of $\Delta_M$ with infinite image.
Then $\TT^\circ(M, L)$ is connected via 2-3 and 3-2 moves.
\end{restate}

This implies that in the graph illustrated in \reffig{EssTriDisconn} there is only one component with edges.
We say that a triangulation is \emph{isolated} if it lies in $\TT(M, L) - \TT^\circ(M, L)$.

As it happens, every isolated triangulation can be connected to $\TT^\circ(M, L)$ by a single \emph{V-move}, a more ``local'' 0-2 move.
See \refdef{VMove} and \reflem{IsolatedVMove}. 
Using (selected) V-moves instead of the more general 0-2 moves yields the connected graph shown in \reffig{EssTriConnV}.

\begin{figure}[htbp]
\includegraphics[width = \textwidth]{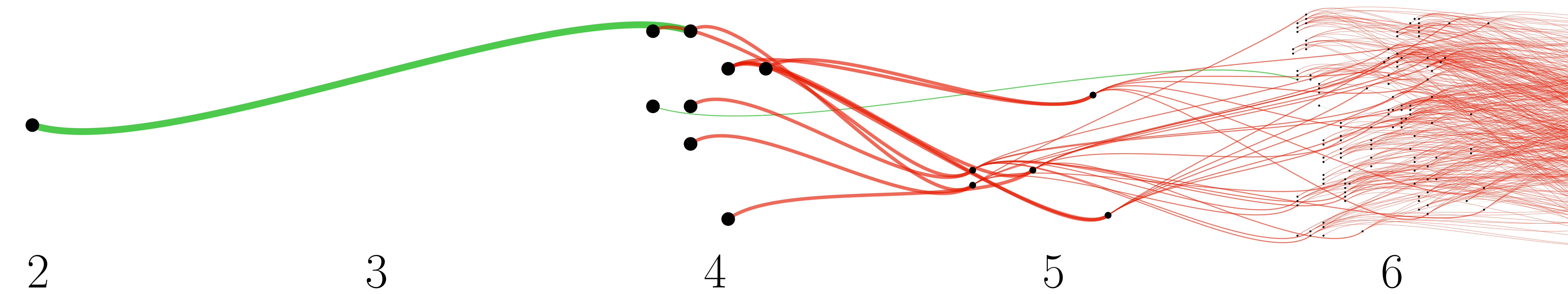}
\caption{Essential one-vertex triangulations of $S^2 \cross S^1$, as connected by 2-3 and
(selected) V-moves.}
\label{Fig:EssTriConnV}
\end{figure}

Combining \refthm{Connectivity} and \reflem{IsolatedVMove}, we obtain the following improvement of \refthm{ConnectivityWith0-2}.

\begin{corollary}
\label{Cor:ConnectivityVMove}
Suppose that $M$ is a compact, connected three-manifold with boundary.
Suppose that $L$ is a labelling of $\Delta_M$ with infinite image.
Then $\TT(M, L)$ is connected via 2-3 moves, V-moves, and their inverse moves. \qed
\end{corollary}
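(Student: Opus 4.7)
The plan is to deduce the corollary directly by combining \refthm{Connectivity} with \reflem{IsolatedVMove}. The strategy is that V-moves serve purely as a tool for escaping the isolated triangulations in $\TT(M, L) \setminus \TT^\circ(M, L)$; once we are inside $\TT^\circ(M, L)$, we appeal to \refthm{Connectivity} and use 2-3 and 3-2 moves alone.

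In more detail, let $\calT_0$ and $\calT_1$ be any two triangulations in $\TT(M, L)$. For each $i \in \{0, 1\}$ we produce a triangulation $\calT_i' \in \TT^\circ(M, L)$ which is connected to $\calT_i$ by at most one move from our extended list. If $\calT_i$ already lies in $\TT^\circ(M, L)$, set $\calT_i' = \calT_i$. Otherwise, $\calT_i$ is isolated in the sense that no 2-3 move preserves $L$--essentiality; then \reflem{IsolatedVMove} supplies a V-move (or an inverse V-move) taking $\calT_i$ to some $\calT_i' \in \TT^\circ(M, L)$. Applying \refthm{Connectivity} to $\calT_0'$ and $\calT_1'$ produces a sequence of 2-3 and 3-2 moves connecting them through $\TT^\circ(M, L)$. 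Concatenating these middle moves with the (at most two) V-moves at the ends yields a path in $\TT(M, L)$ from $\calT_0$ to $\calT_1$ built from 2-3 moves, V-moves, and their inverses, as desired.

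The hard part of the argument is therefore entirely displaced to the two cited results: \refthm{Connectivity} does the heavy lifting of establishing connectivity through $\TT^\circ(M, L)$, while \reflem{IsolatedVMove} handles the subtle isolated cases (such as the one illustrated in \refexa{S2xS1}). With both of those in hand, the proof of the corollary itself is no more than the concatenation above, and the hypothesis that $L$ has infinite image is inherited exactly from the hypotheses of the inputs.
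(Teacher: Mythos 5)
Your proposal is correct and matches the paper's intended argument: the corollary is stated with a \qed precisely because it follows by combining \refthm{Connectivity} with \reflem{IsolatedVMove} exactly as you describe, using a single V-move (or its inverse) to move each isolated triangulation into $\TT^\circ(M, L)$ and then connecting within $\TT^\circ(M, L)$ by 2-3 and 3-2 moves. The only cosmetic point is that \reflem{IsolatedVMove} always supplies a V-move (not its inverse) out of an isolated triangulation; the inverse move simply appears when you traverse that step backwards at the other end of the path, which your concatenation already handles.
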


In~\cite{SchleimerSegerman24Finiteness}, the second and third authors will use \refcor{ConnectivityVMove} to classify the veering triangulations of hyperbolic once-punctured torus bundles and various other manifolds.
To do this, we carry a ``winding'' and a compatible circular ordering through a sequence of essential triangulations, connected by 2-3 moves, V-moves, and their inverses.
Surprisingly, some windings cannot be carried through certain 2-0 moves. 
Thus \refthm{ConnectivityWith0-2} does not suffice for this application. 

\subsection{Outline}

Given \refthm{ConnectivityWith0-2}, it suffices to be able to implement a 0-2 move between $L$--essential triangulations via a sequence of 2-3 and 3-2 moves where all intermediate triangulations are also $L$--essential. 
In the absence of a requirement of $L$--essentiality, this can be done purely locally.
See Lemma~1.2.11 and Proposition~1.2.8 of~\cite{Matveev07}.

In \refsec{FollowingMatveev}, we lay out the hypotheses and tools needed to make this local construction go through with the additional requirement of $L$--essentiality.
Our versions of the local construction are set out in Lemmas~\ref{Lem:Do0-2ThreeSides} and~\ref{Lem:Do0-2ManyEdges}.
However, as illustrated in \refsec{LocallyFrozen}, it may be that near the site of the 0-2 move there are no 3-2 moves, and any 2-3 move destroys $L$--essentiality.
That is, any nearby 2-3 move introduces an edge between two vertices in the universal cover with the same label. 
Thus we must bring some ``distant'' vertex, with a different label, ``close'' to the site of the 0-2 move.

In \refsec{DistantLabels} we give a collection of moves (the \emph{augmented 2-3 move} and the \emph{nature reserve moves}) that preserve $L$--essentiality and serve as tools to ``transport'' the distant vertex.
In \refsec{ParallelSequences}, we assemble these moves into two parallel sequences, appropriately commuting with the 0-2 move.
These sequences move the distant vertex into contact with the site of the 0-2 move, at which point we apply \reflem{Do0-2ManyEdges}.

\subsection*{Acknowledgements}

The third author was supported in part by National Science Foundation grant DMS-2203993.

\section{Background}

Here we essentially follow the notation and definitions of~\cite[Section~2]{KSS24a}.

\subsection{Triangulations and foams}

Our main theorem, \refthm{Connectivity}, is stated in terms of triangulations.
However, their dual \emph{foams} are often easier to understand.
(One possible explanation for this is that the complexity concentrated at the vertices of a triangulation is spread across the boundaries of the three-cells in the dual foam. 
For a simple example, see \reffig{HypothesisExamples}.)
Both triangulations and foams are complexes formed by taking a disjoint union of cells and gluing them together.
We refer to the cells, before gluing, as \emph{model} cells.

\begin{definition}
A \emph{triangulation} $\calT$ is a collection of model tetrahedra $\{t_k\}$ together with a collection of \emph{face pairings} $\{\phi_{ij}\}$.
Here $\phi_{ij}$ is an isomorphism from some model face $f_i$ of some model tetrahedron $t_{k(i)}$ to some model face $f_j$ of some model tetrahedron $t_{k(j)}$.
The \emph{realisation} of $\calT$ is the topological space $|\calT|$ obtained by taking the disjoint union of the $t_i$ and forming the quotient by the $\phi_{ij}$.
The zero-skeleton of $|\calT|$ is the image of the model vertices.

Suppose that $M$ is a compact, connected three-manifold with boundary.
A triangulation $\calT$ is an \emph{ideal triangulation} of $M$ if $M - \bdy M$ is homeomorphic to $|\calT|$ minus its zero-skeleton.
\end{definition}

\begin{definition}
We denote the universal covering map by $\phi_M \from \cover{M} \to M$.
We use $\Delta_M$ to denote the set of boundary components of $\cover{M}$.
\end{definition}

\begin{definition}
\label{Def:Foam}
Suppose that $M$ is a compact, connected three-manifold, with boundary.
Suppose that $\calT$ is an ideal triangulation of $M$.
We call $\calF$, the dual two-complex to $\calT$, a \emph{foam} in $M$.
We refer to the components of $M - \calF$ as \emph{(complementary) regions}.
\end{definition}

See Figures~\ref{Fig:PointTypesOnFoam1},~\ref{Fig:PointTypesOnFoam2}, and~\ref{Fig:PointTypesOnFoam3} for small neighbourhoods of points of $\calF$ in $M$.

\begin{figure}[htbp]
\subfloat[]{
\includegraphics[height = 2.4cm]{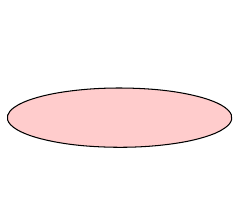}
\label{Fig:PointTypesOnFoam1}
}
\quad
\subfloat[]{
\includegraphics[height = 2.4cm]{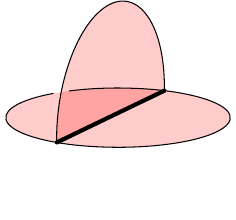}
\label{Fig:PointTypesOnFoam2}
}
\quad
\subfloat[]{
\includegraphics[height = 2.4cm]{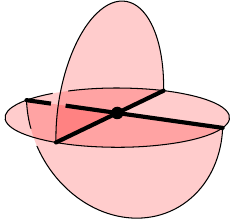}
\label{Fig:PointTypesOnFoam3}
}
\quad
\subfloat[]{
\includegraphics[height = 2.4cm]{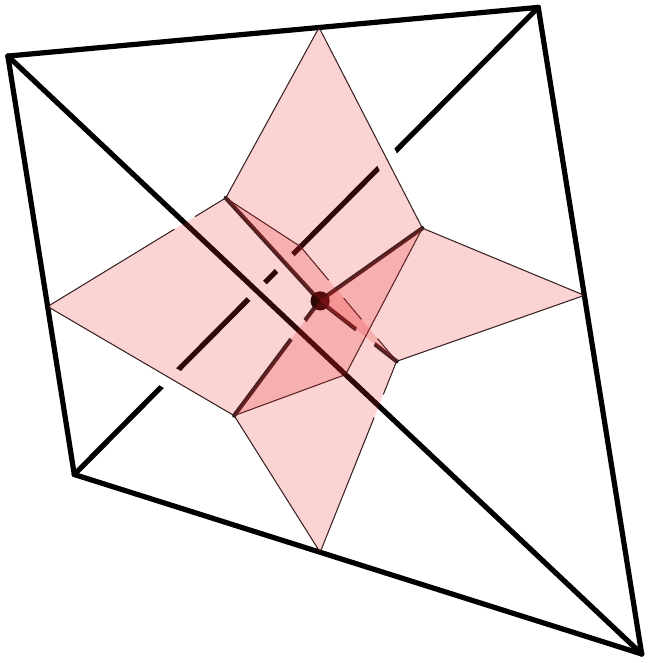}
\label{Fig:Butterfly}
}
\caption{Local pictures of foams.}
\label{Fig:Foam}
\end{figure}

\begin{definition}
\label{Def:EdgeLoop}
An edge $e$ of a foam is an \emph{edge loop} if $e$ has both ends at a single vertex.
Any lift of $e$ to a cover (including the trivial cover) is a \emph{cyclic edge}.
\end{definition}  

For examples of edge loops and cyclic edges see \reffig{HypothesisExamples}.

\subsection{Labellings and $L$--essentiality}
\label{Sec:Labellings}

\begin{definition}
\label{Def:Labelling}
Suppose that $\calL$ is a set of \emph{labels} equipped with an action of $\pi_1(M)$.
Suppose that $L \from \Delta_M \to \calL$ is a $\pi_1(M)$--equivariant function.
Then we call $L$ a \emph{labelling} of $\Delta_M$.
\end{definition}

\begin{definition}
\label{Def:LEssential}
Suppose that $\calT$ is an ideal triangulation of $M$.
Suppose that $L$ is a labelling of $\Delta_M$, as in \refdef{Labelling}.
Suppose that $e$ is an edge of $\calT$ with a lift $\cover{e}$ in $\cover{\calT}$.
Suppose that $\cover{u}$ and $\cover{v}$ are the endpoints of $\cover{e}$.
If $L(\cover{u}) \neq L(\cover{v})$ then we say that $e$ is \emph{$L$--essential}.
If all edges of $\calT$ are $L$--essential then we say that $\calT$ is \emph{$L$--essential}.
\end{definition}

See \cite[Section~2.9]{KSS24a} for examples of labellings.
The simplest labelling is the identity map on $\Delta_M$.
As noted in \cite[Remark~2.18]{KSS24a}, with this labelling $L$--essential triangulations are essential triangulations in the sense of \cite[Definitions~3.2 and~3.5]{HodgsonRubinsteinSegermanTillmann15} and \cite[page~336]{LuoTillmannYang13}.

Dually, our notions of $L$--essentiality apply to foams as follows.

\begin{definition}
\label{Def:LEssentialFoam}
Suppose that $\calF$ is a foam in $M$.
We extend the labelling function $L$ to components of $\cover{M} - \cover{\calF}$ as follows.
Suppose that $C$ is a component of $\cover{M} - \cover{\calF}$ with boundary component $c \in \Delta_M$.
Then we set $L(C) = L(c)$.

Now suppose that $f$ is a face of a foam $\calF$ with a lift $\cover{f}$ in $\cover{\calF}$, with components $U$ and $V$ of $\cover{M} - \cover{\calF}$ incident to $\cover{f}$.
We say that $f$ is \emph{$L$--essential} if $L(U) \neq L(V)$.
If all faces of $\calF$ are $L$--essential then we say that $\calF$ is \emph{$L$--essential}.
\end{definition}

\subsection{Moves on foams}

The three-dimensional bistellar moves~\cite{Pachner78} are the 1-4, 2-3, 3-2, and 4-1 moves.
These can be performed equally well on triangulations or their dual foams.
Of these we only consider the 2-3 and 3-2 moves, applied to foams.
The former is called the \emph{T move} by Matveev~\cite[page~14]{Matveev07}.
It can be performed along any edge of $\calF$ that is not an edge loop. 
See \reffig{2-3}.
The 3-2 move can be performed on any triangular face whose closure is embedded in $\calF$.

\begin{figure}[htbp]
\subfloat[]{
\includegraphics[height = 4.cm]{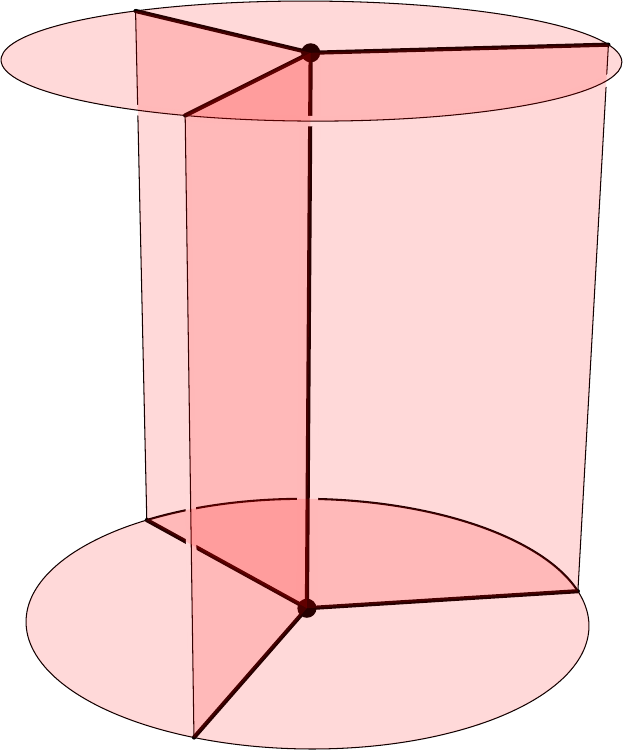}
\label{Fig:2-3Before}
}
\qquad
\subfloat[]{
\includegraphics[height = 4.cm]{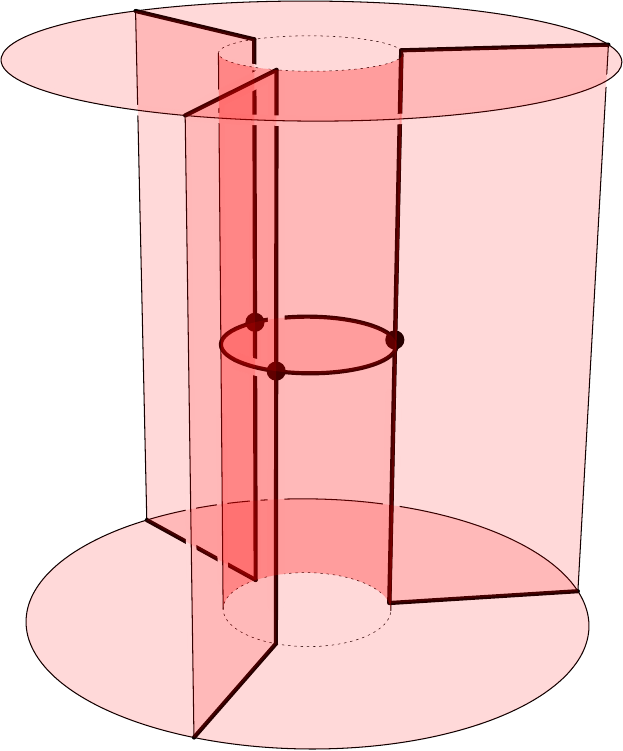}
\label{Fig:2-3After}
}
\caption{The 2-3 move.}
\label{Fig:2-3}
\end{figure}

\subsubsection{The 0-2 move}
\label{Sec:0-2}

The \emph{0-2 move}, defined by \reffig{0-2}, is called the \emph{ambient lune} move by Matveev~\cite[page~17]{Matveev07}.
It is applied along an arc $\delta$ properly embedded in, and avoiding the vertices of, a face of $\calF$.
The 0-2 move creates two new vertices and a new bigon face.
We denote the result by $\calF[\delta]$.

\begin{figure}[htbp]
\subfloat[]{
\labellist
\hair 2pt \small
\pinlabel $\delta$ [r] at 144 200
\endlabellist
\includegraphics[height = 4cm]{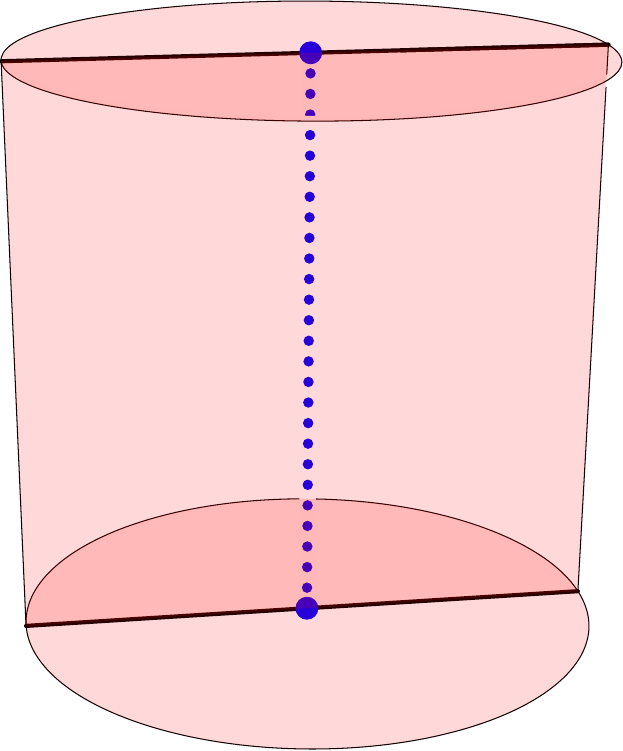}
\label{Fig:0-2Before}
}
\qquad
\subfloat[]{
\labellist
\hair 2pt \small
\pinlabel $f$ at 58 195
\pinlabel $f'$ at 242 200
\endlabellist
\includegraphics[height = 4cm]{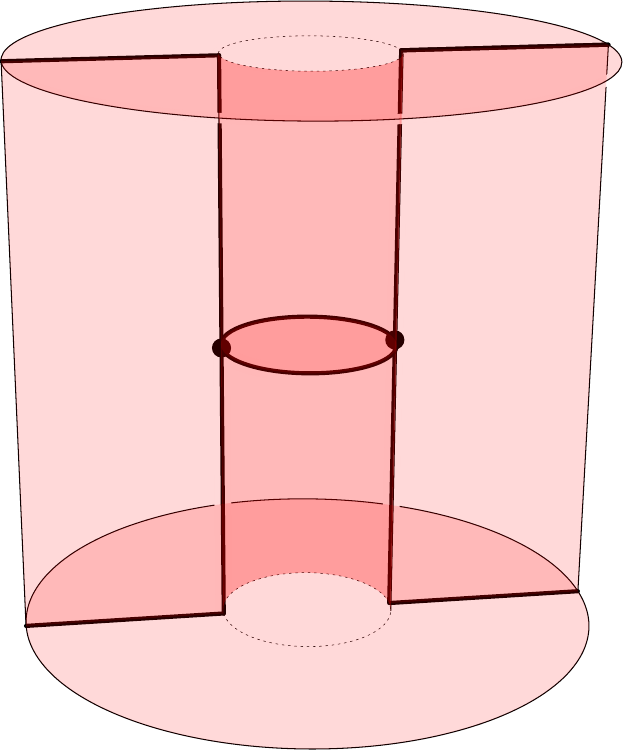}
\label{Fig:0-2After}
}
\caption{The 0-2 move. The vertical dotted arc $\delta$ in \reffig{0-2Before} indicates the arc along which the 0-2 move acts.}
\label{Fig:0-2}
\end{figure}

\begin{definition}
\label{Def:Ancestor}
Suppose that $e$ is an edge of a foam $\calF$.
Let $\calN(e)$ be a small regular neighbourhood of $e$.
Suppose that $\calF'$ is the result of applying a 2-3 move to $\calF$ along $e$.
We assume that the move is supported in $\calN(e)$.
Suppose that $c$ is an open cell (or open complementary region) of $\calF$.
Suppose that $c'$ is similarly obtained from $\calF'$.
If $c \cap c' - \calN(e)$ is non-empty then we say that $c$ the \emph{ancestor} of $c'$ and $c'$ is the \emph{descendant} of $c$.

We make similar definitions for the 3-2, 0-2, 2-0, and various other moves we define later in the paper.
Finally we make the relation transitive through multiple moves.
\end{definition}

\begin{remark}
In a small abuse of notation we often use the same name for an ancestor and its descendants.
\end{remark}

\subsection{Edge loops cause issues}

Here we give an explicit example of an isolated essential triangulation: that is, no 2-3 or 3-2 move preserves essentiality.

\begin{figure}[htbp]
\subfloat[A foam in $S^1 \times D^2$.]{
\includegraphics[height = 4.0cm]{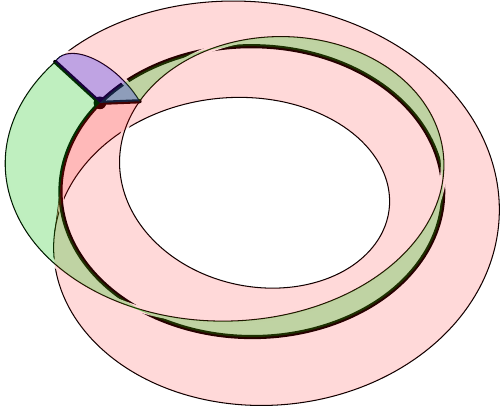}
\label{Fig:S2xS1Example}
}
\quad
\subfloat[The dual triangulation. The internal triangle is shaded.]{
\includegraphics[height = 4.0cm]{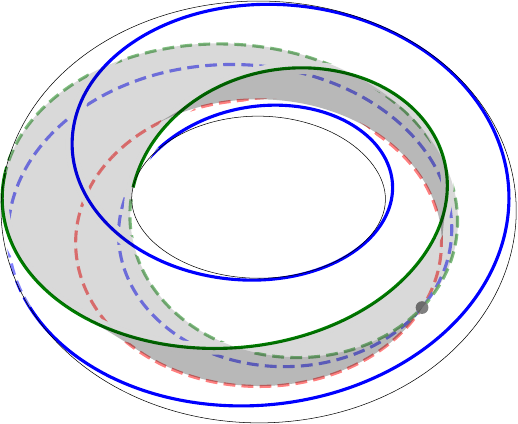}
\label{Fig:SolidTorusTetrahedron}
}

\subfloat[Part of the universal cover for the foam in \reffig{S2xS1Example}.]{
\includegraphics[height = 4.0cm]{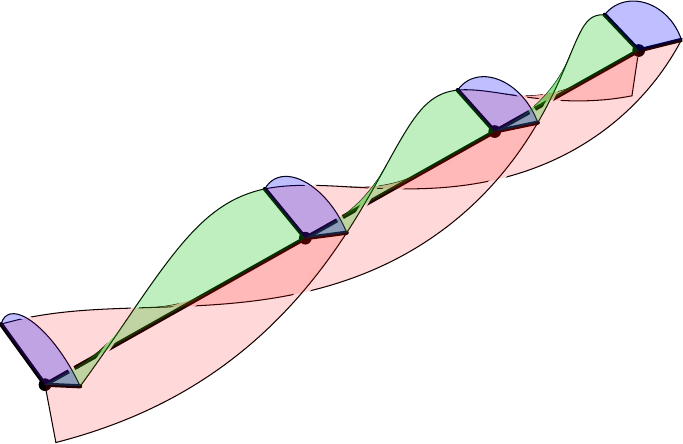}
\label{Fig:S2xS1ExampleCover}
}
\caption{We obtain a foam in $S^1 \times S^2$ by doubling the foam in \reffig{S2xS1Example} across the boundary of the solid torus.
There is an edge loop in \reffig{S2xS1Example}.
Its lifts in \reffig{S2xS1ExampleCover} are cyclic edges. }
\label{Fig:HypothesisExamples}
\end{figure}

\begin{example}
\label{Exa:S2xS1}
\reffig{S2xS1Example} shows a foam in a solid torus.
Mirroring this foam across the boundary torus produces a foam $\calF$ in $S^2 \times S^1$ with two vertices, four edges, three faces, and one complementary region. 
Let $L$ be the identity labelling (as described in \refsec{Labellings}).
All faces of $\calF$ are essential (in the universal cover the regions to either side of each face are distinct).
There are no triangular faces so no 3-2 move is possible. 
Two of the edges of $\calF$ are edge loops so a 2-3 move cannot be applied along them. 
The other two edges bound a bigon face, which implies that a 2-3 move applied along them introduces an inessential face.
It follows that the triangulation $\calT$ dual to $\calF$ is isolated.
In fact, $\calT$ is the unique triangulation with two tetrahedra in \reffig{EssTriDisconn}.
\end{example}

This example is similar in spirit to the finite number of exceptions to Matveev's, Piergallini's, and Amendola's original result (\refthm{MPA}).
There, a triangulation with a single tetrahedron is ``isolated''.
However in our context, starting with $\calF$ and repeatedly applying \refrem{MoreIsolated} produces infinitely many examples. 

\section{$L$--flippable edges and cyclic edges}

Suppose that $M$ is a compact, connected three-manifold with boundary.
Suppose that $L$ is a labelling of $\Delta_M$.

\begin{definition}
\label{Def:LFlippable}
Suppose that $\calT$ is an $L$--essential ideal triangulation of $M$.
Suppose that $f$ is a face of $\calT$ and suppose that performing a 2-3 move across $f$ produces $\calT'$, which is also $L$--essential.
Then we say that $f$ is \emph{$L$--flippable}.
We make the analogous definition for edges of foams.
\end{definition}

\begin{remark}
Suppose that $\calT$ is an $L$--essential ideal triangulation.
Suppose that $\calT$ has a 3-2 move along the edge $e$ that preserves $L$--essentiality.
Then, for $f$ any face adjacent to $e$, the 2-3 move across $f$ also preserves $L$--essentiality. 
\end{remark}

Therefore, in the graph with $\TT(M,L)$ for vertices and 2-3 moves for edges, 
a triangulation is isolated if and only if it has no $L$--flippable faces.

\begin{definition}
\label{Def:NonIsolated}
Let $\TT^\circ(M,L)$ be the set of $L$--essential triangulations of $M$ that have at least one $L$--flippable face.
\end{definition}

\begin{remark}
If $L$ has infinite image then $\TT(M,L)$ is non-empty by \cite[Theorem~3.1]{KSS24a}.
\reflem{IsolatedVMove} then implies that $\TT^\circ(M,L)$ is non-empty.
\end{remark}

Our main result is the following.

\begin{theorem}
\label{Thm:Connectivity}
Suppose that $M$ is a compact, connected three-manifold with boundary.
Suppose that $L$ is a labelling of $\Delta_M$ with infinite image.
Then the set $\TT^\circ(M,L)$ is connected via 2-3 and 3-2 moves.
\end{theorem}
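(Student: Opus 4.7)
The plan is to reduce \refthm{Connectivity} to \refthm{ConnectivityWith0-2}. By that earlier theorem, any two triangulations in $\TT^\circ(M,L) \subseteq \TT(M,L)$ are connected by a path of 2-3, 3-2, 0-2, and 2-0 moves through $L$--essential triangulations. It therefore suffices to show that each 0-2 move (and, by time-reversal, each 2-0 move) occurring in such a path can be simulated by a finite sequence of 2-3 and 3-2 moves, all of whose intermediate triangulations lie in $\TT^\circ(M,L)$. Because the endpoints of the path were chosen in $\TT^\circ(M,L)$, once we secure this simulation we are done.

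First I would dispose of the ``local'' case. Given a 0-2 move along an arc $\delta$ in a face $f$ of a foam $\calF$, the classical Matveev construction (Lemma~1.2.11 and Proposition~1.2.8 of~\cite{Matveev07}) realises the 0-2 move via a short sequence of 2-3 and 3-2 moves supported in a small neighbourhood of $f$. Lemmas~\ref{Lem:Do0-2ThreeSides} and~\ref{Lem:Do0-2ManyEdges} of \refsec{FollowingMatveev} will be the $L$--essential refinements: provided some vertex with an appropriate distinct label is adjacent to $f$ (or its neighbourhood) and certain side conditions hold, the Matveev sequence can be carried out while keeping every intermediate foam $L$--essential. I would also check, at each intermediate step, the existence of at least one $L$--flippable face, so that the whole sequence lies in $\TT^\circ(M,L)$; this is automatic at most stages since each 2-3 move in the sequence is itself $L$--flippable and its reverse exhibits a 3-2 move, and the remark following \refdef{LFlippable} then guarantees further nearby flippable faces.

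The main obstacle is the ``locally frozen'' case described in \refsec{LocallyFrozen}: near the bigon created by the 0-2 move there may be no 3-2 move available and every nearby 2-3 move would introduce an edge between two lifts with equal $L$--label, destroying essentiality. To handle this I would invoke the tools of \refsec{DistantLabels}, namely the augmented 2-3 move and the nature reserve moves, each of which preserves $L$--essentiality by design. Following the strategy of \refsec{ParallelSequences}, I would build two parallel sequences of such moves which commute appropriately with the 0-2 move: one sequence runs in the ambient foam $\calF$, the other in $\calF[\delta]$, and together they transport a vertex with a suitably different label into contact with the site of $\delta$. Since $L$ has infinite image, such a ``distant'' vertex is always available somewhere in $M$; the nature reserve construction is what guarantees we can walk it to the required location without ever freezing globally.

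Once the distant vertex has been brought into position, the hypotheses of \reflem{Do0-2ManyEdges} are satisfied, and the 0-2 move can be replaced by its local $L$--essential simulation. Reversing the transport sequence afterwards returns the ``auxiliary'' part of the foam to its original configuration, leaving exactly the intended 0-2 result. The transport and its reverse are built from moves that preserve $L$--essentiality, and each produces triangulations that retain $L$--flippable faces (the very edges used by the transport witness this), so every intermediate triangulation lies in $\TT^\circ(M,L)$. This completes the simulation of a single 0-2 move; concatenating such simulations along the path from \refthm{ConnectivityWith0-2} yields a 2-3/3-2 path in $\TT^\circ(M,L)$, proving \refthm{Connectivity}. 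The technically delicate step will be verifying that the parallel transport sequences genuinely commute with the 0-2 move and keep the relevant faces $L$--essential throughout; I expect this verification to be the heart of the argument and the main content of \refsec{ParallelSequences}.
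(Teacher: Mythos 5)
Your overall strategy---take the path from \refthm{ConnectivityWith0-2} and simulate each 0-2 and 2-0 move by a 2-3/3-2 sequence, handling the locally frozen case by transporting a distant label via augmented 2-3 and nature reserve moves---is exactly the shape of the paper's argument (this simulation is \refprop{Make0-2}, and its proof is indeed the bulk of the paper). However, there is a genuine gap in the reduction itself. The simulation of a 0-2 move along $\delta$ requires, as a hypothesis of \refprop{Make0-2}, that \emph{both} $\calF$ and $\calF[\delta]$ already possess an $L$--flippable edge: the transport machinery cannot even begin without one, since every augmented 2-3 move and nature reserve move is seeded by an $L$--flippable (in particular non-cyclic) edge. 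The path supplied by \refthm{ConnectivityWith0-2} passes through $L$--essential foams that may have cyclic edges and \emph{no} $L$--flippable edge at all---that is, foams outside $\TT^\circ(M,L)$---and at such a foam the adjacent 0-2 or 2-0 moves cannot be simulated; indeed such a foam can be isolated in the 2-3/3-2 graph, as \refexa{S2xS1} shows. Your remark that flippability is ``automatic at most stages'' because the 2-3 moves in the simulation are themselves flippable does not address this: the issue is the foams of the \emph{original} path at which the next move is a 0-2 or 2-0 move, and a 2-0 move can destroy the only flippable edges present.

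The missing ingredient is control of cyclic edges. The paper first applies \reflem{RemoveCyclic} (V-moves) to the two endpoint foams to reach foams with no cyclic edges, then proves the refinement \reflem{ConnectivityNoCyclic}, which reruns the \refthm{ConnectivityWith0-2} path while inserting V-moves so that \emph{no} intermediate foam ever acquires a cyclic edge. Since $L$ has infinite image, \reflem{Exists2-3Move} (via \reflem{LInfinite5Labels}) then guarantees that every foam along this modified path has an $L$--flippable edge, so the hypotheses of \refprop{Make0-2} hold at every 0-2 and 2-0 move, and only then can each such move be replaced by a 2-3/3-2 sequence. Your proposal would become correct if you added this cyclic-edge preprocessing step; as written, the reduction to \refthm{ConnectivityWith0-2} is too direct and fails precisely at the isolated or frozen foams that the theorem's statement (the restriction to $\TT^\circ(M,L)$) is designed to avoid.
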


To prove this we require several tools.

\subsection{Finding $L$-flippable edges} 

\begin{lemma}
\label{Lem:LInfinite5Labels}
Suppose that the labelling $L$ has infinite image.
Then there is an edge $e$ of $\cover{\calF}$ that is incident to complementary regions with five distinct labels. 
\end{lemma}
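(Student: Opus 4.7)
The plan is to argue by contradiction, exploiting the $L$--essentiality of the ambient triangulation $\calT$ together with the connectedness of $\cover{M}$.  Since $\calT$ is $L$--essential, every edge of $\cover{\calT}$ has endpoints with distinct labels; hence every tetrahedron of $\cover{\calT}$ carries four pairwise distinct vertex labels.  I would fix one such tetrahedron $t$, with vertices $v_1, v_2, v_3, v_4$, and write $l_i = L(v_i)$, so that $\{l_1, l_2, l_3, l_4\}$ is a set of size four.

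Now suppose, for contradiction, that no edge of $\cover{\calF}$ is incident to regions carrying five distinct labels; equivalently, for every triangle $T$ of $\cover{\calT}$ the five bipyramid vertices of $T$ carry at most four distinct labels.  For each face $f_i$ of $t$ (opposite $v_i$), let $t_i$ denote the other tetrahedron of $\cover{\calT}$ sharing $f_i$, and let $p_i$ denote its apex.  The five bipyramid vertices at the foam-edge dual to $f_i$ are exactly $\{v_1, v_2, v_3, v_4, p_i\}$, so the contradictory hypothesis forces $L(p_i) \in \{l_1, l_2, l_3, l_4\}$.  On the other hand, the three slanted edges of $t_i$ joining $p_i$ to the vertices of $f_i$ are $L$--essential, so $L(p_i) \neq l_j$ for each $j \neq i$.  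The only remaining possibility is $L(p_i) = l_i$, whence the vertex-label set of $t_i$ equals $\{l_1, l_2, l_3, l_4\}$.

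I would then iterate this equality along chains of face-adjacent tetrahedra of $\cover{\calT}$: the vertex-label set is preserved at each step.  Because $\cover{M}$ is connected, the face-adjacency graph on the tetrahedra of $\cover{\calT}$ is connected, so every tetrahedron of $\cover{\calT}$ has vertex-label set $\{l_1, l_2, l_3, l_4\}$.  In particular the image of $L$ is contained in this four-element set, contradicting the infinite-image hypothesis.

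The delicate moment in the plan is the short case analysis pinning down $L(p_i) = l_i$ as the only way the bipyramid can have fewer than five distinct labels; it is $L$--essentiality that collapses all other failure modes to this one.  Once this is in hand, the propagation step across face-adjacencies is essentially automatic, and I do not anticipate any further obstacle.
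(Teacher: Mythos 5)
Your argument is correct and is essentially the paper's own proof, just phrased dually in the triangulation rather than the foam: your observation that the two apexes of the bipyramid over a face must share a label is exactly the paper's step that the two regions meeting only the endpoints of the dual foam edge have equal labels, and the propagation along the face-adjacency graph is the paper's propagation along the connected one-skeleton of $\cover{\calF}$, yielding in both cases that $|L(\Delta_M)| \leq 4$, contradicting (equivalently, contraposing) the infinite-image hypothesis.
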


\begin{proof}
We prove the contrapositive. 
Suppose that $e$ is an edge of $\cover{\calF}$ with endpoints $v$ and $w$.
We assume that $e$ is incident to at most four distinct labels.
Let $A$, $B$, and $C$ be the labels of the complementary regions meeting the interior of $e$.
Let $D$ and $E$ be the labels of the complementary regions meeting $v$ and $w$ but not meeting the interior of $e$.
The labels $A$, $B$, and $C$ are distinct because $\calF$ is $L$--essential.
Similarly, the labels $D$ and $E$ are each distinct from $A$, $B$, and $C$.
Thus we must have that $D = E$.

The one-skeleton of $\cover{\calF}$ is connected, so propagating the above argument we find that every vertex of $\cover{\calF}$ is incident to regions with labels $A$, $B$, $C$, and $D$.
Thus $|L(\Delta_M)| = 4$.
\end{proof}

\begin{lemma}
\label{Lem:Exists2-3Move}
Suppose that the labelling $L$ has infinite image.
Suppose that $\calF$ is an $L$--essential foam in $M$.
Suppose that $\calF$ has no cyclic edges.
Then $\calF$ contains an $L$--flippable edge.
\end{lemma}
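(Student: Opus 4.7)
The plan is to combine \reflem{LInfinite5Labels} with a direct analysis of which regions the new face of a 2-3 move separates. Since $L$ has infinite image, \reflem{LInfinite5Labels} supplies an edge $\cover{e}$ of $\cover{\calF}$ whose incident complementary regions carry five distinct labels. Following the notation of that lemma's proof, call the three regions meeting the interior of $\cover{e}$ by the labels $A$, $B$, $C$, and call the remaining regions at the two endpoints of $\cover{e}$ by the labels $D$ and $E$; in particular $L(D) \neq L(E)$.

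Next I would descend to the image edge $e = \phi_M(\cover{e})$ in $\calF$. Because $\calF$ has no cyclic edges, $e$ is not an edge loop, so a 2-3 move along $e$ is well-defined. Let $\calF'$ denote the resulting foam. The move is supported in a small neighbourhood $\calN(e)$ and leaves everything outside it unchanged, so every face of $\calF'$ outside $\calN(e)$ inherits $L$--essentiality from $\calF$. Inside $\calN(e)$ there are two kinds of faces to check: the descendants of the three old faces adjacent to $e$, and the single new triangular face $\tau$.

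The key observation is that each descendant of an old face still separates the same pair of regions among (the descendants of) $A$, $B$, $C$, so remains $L$--essential because $L(A)$, $L(B)$, $L(C)$ are pairwise distinct. The new triangular face $\tau$ separates precisely the two regions that were at the endpoints of $e$, that is, the descendants of $D$ and $E$. Lifting $\tau$ back to the cover at $\cover{e}$ witnesses $L(D) \neq L(E)$, so $\tau$ is $L$--essential; $\pi_1(M)$--equivariance of $L$ then ensures the same holds at every other lift of $e$. Thus $\calF'$ is $L$--essential, and $e$ is $L$--flippable.

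There is no real obstacle here: the content has already been concentrated in \reflem{LInfinite5Labels}, and this lemma is essentially bookkeeping for how the local combinatorics of a 2-3 move interact with the region labels. The only points that need careful articulation are (i) that the hypothesis of no cyclic edges is exactly what we need in order to legitimately perform a 2-3 move on the edge produced by \reflem{LInfinite5Labels}, and (ii) that the new triangular face separates the descendants of the two endpoint regions, so that $L(D) \neq L(E)$ is precisely the condition needed for the move to preserve $L$--essentiality.
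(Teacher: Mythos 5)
Your proposal is correct and follows the paper's proof exactly: invoke \reflem{LInfinite5Labels} to get an edge of $\cover{\calF}$ incident to five distinct labels, use the no-cyclic-edges hypothesis to perform the 2-3 move along its image, and conclude it is $L$--flippable. The paper compresses the final step to a single sentence; your explicit check that the only new face separates the two endpoint regions (with distinct labels) and that old faces separate the same regions as before is precisely the bookkeeping the paper leaves implicit.
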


\begin{proof}
By \reflem{LInfinite5Labels} there is an edge $e$ of $\cover{\calF}$ that is incident to complementary regions with five distinct labels.
By hypothesis, $e$ is not cyclic.
Therefore $\phi_M(e)$ is $L$--flippable.
\end{proof}

\subsection{V-moves}

We use some of the same tools as Matveev, beginning with the V-move~\cite[Definition~1.2.6]{Matveev07}.

\begin{definition}
\label{Def:VMove}
Suppose that $\calF$ is a foam.
Suppose that $v$ is a vertex of $\calF$.
Let $\calN$ be a small regular neighbourhood of $v$.
Suppose that $\delta_+$ and $\delta_-$ are a pair of opposite edges of $\bdy(\calF \cap \calN)$.
Then the \emph{V-move at $v$ along $\delta_+$} is the 0-2 move along $\delta_+$.
(Equivalently, it is also the 0-2 move along $\delta_-$).
\end{definition}


The V-move is shown for both a triangulation and the dual foam in \reffig{VMove}.
Note that there is a reflection symmetry in the resulting triangulation and thus (combinatorially) in the foam.

\begin{figure}[htbp]
\subfloat[]{
\includegraphics[width = 0.43\textwidth]{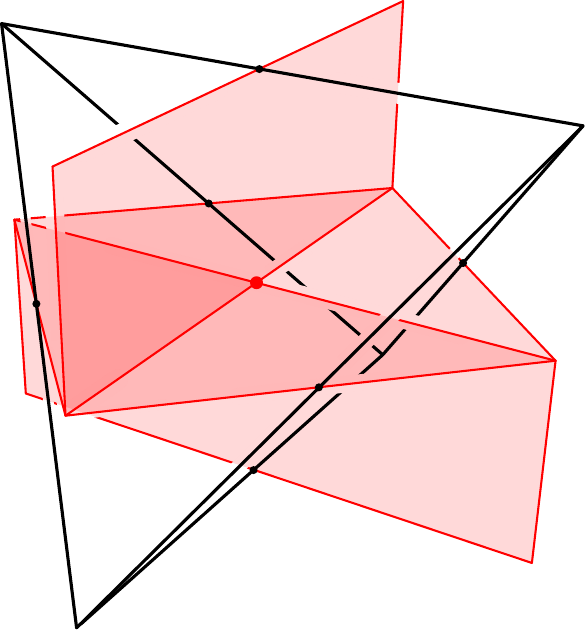}
\label{Fig:VMove1}
}
\quad
\subfloat[]{
\labellist
\hair 2pt \small
\endlabellist
\includegraphics[width = 0.43\textwidth]{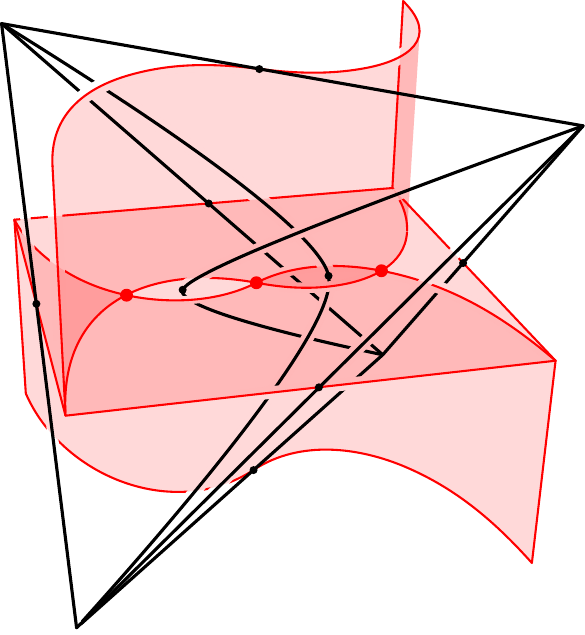}
\label{Fig:VMove2}
}
\caption{The V-move, applied in the right/left direction. Small black dots indicate intersections between the edges of the triangulation and their dual faces in the foam.}
\label{Fig:VMove}
\end{figure}

\begin{lemma}
\label{Lem:VMoveL}
Suppose that $\calF$ is a foam in $M$.
Suppose that applying a V-move to $\calF$ produces $\calF'$.
Then we have the following.
\begin{itemize}
\item $\calF$ is $L$--essential if and only if $\calF'$ is $L$--essential.
\item If $\calF$ has an $L$--flippable edge then so does $\calF'$.  
\item If $e$ is a cyclic edge of $\calF'$ then it has a cyclic ancestor in $\calF$.
\end{itemize}
\end{lemma}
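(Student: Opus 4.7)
The V-move at $v$ is supported in a small regular neighborhood $\calN$ of $v$; the plan is to exploit that the foam is unchanged outside $\calN$, while the complementary regions of $M - \calF$ and of $M - \calF'$ are canonically identified (only their cellular boundaries are refined). In particular, the labels $L$ of complementary regions transfer through the V-move. Inside $\calN$, the V-move adds two new vertices and one new bigon face, together with the accompanying new edges.

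For the first bullet, I would classify the faces of $\calF'$ as (i) unchanged faces of $\calF$, (ii) pieces of faces of $\calF$ subdivided by the V-move, or (iii) the new bigon face. Faces of types (i) and (ii) are bounded by the same pair of complementary regions as their ancestor face in $\calF$, so they are $L$--essential if and only if that ancestor is. The bigon in (iii) lies between the two regions that meet the face of $\calF$ through which $\delta_+$ passes, so it is $L$--essential if and only if that face of $\calF$ is. This would prove the biconditional.

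For the third bullet, I would observe that every new edge created by the V-move is incident to at least one of the two new vertices, and these new vertices are distinct from each other and from the vertices of $\calF$. Therefore no new edge has both endpoints at a single vertex, so no new edge is an edge loop. Any edge loop of $\calF'$ is thus the descendant of an edge of $\calF$, and since the V-move does not alter identifications outside $\calN$, that ancestor is itself an edge loop. Passing to lifts in $\cover{\calF}$ and $\cover{\calF'}$ would give the statement for cyclic edges.

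For the second bullet, let $e$ be an $L$--flippable edge of $\calF$. If $e$ is disjoint from $\calN$, its descendant in $\calF'$ has the same incident regions and is still not an edge loop, so it remains $L$--flippable. The remaining case---when $e$ is incident to $v$---will be the \emph{main obstacle}: here I would use the explicit local picture of the V-move in \reffig{VMove} to identify an edge of $\calF'$ in or adjacent to $\calN$ whose incident complementary regions inherit their labels from those around $e$ in $\calF$ and which is not an edge loop, yielding the required $L$--flippable edge of $\calF'$.
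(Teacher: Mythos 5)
Your proposal has two genuine problems, both traceable to a mistaken local model of the 0-2/V-move. First, the new bigon face does \emph{not} lie between the two regions adjacent to the face $F$ containing $\delta_+$. The 0-2 move pushes a finger of a complementary region through one of the faces meeting an endpoint of $\delta_+$, and the resulting bigon separates the two \emph{outer} regions at the two edges of $F$ containing $\bdy\delta_+$; for a V-move at $v$ these are precisely the two regions at $v$ that do \emph{not} meet $F$ (dually: the new edge of the triangulation joins the two vertices of the tetrahedron dual to $v$ that are not endpoints of the edge dual to $F$). This is exactly how the paper uses the move elsewhere, e.g.\ in \reflem{Do0-2ThreeSides}, where $L$--essentiality of $\calF[\delta]$ forces the labels of the outer regions $A$ and $B$ at the two ends of $\delta$ to differ. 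So your justification of the forward direction of the first bullet rests on a false identification; the statement survives only because those two outer regions already meet along another face of $\calF$ at $v$, which is the paper's actual argument (``no new pair of regions comes into contact''). The same misreading infects your claim that every subdivided piece of an old face keeps the same pair of adjacent regions: the piece of $F$ under the snakelet, for instance, has one adjacent region changed to the outer region; the claim is only true for the genuine faces of $\calF'$, after one sees how such pieces merge with neighbouring old faces across the opened hole.

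Second, for the middle bullet you explicitly defer the only case with content, namely an $L$--flippable edge incident to $v$; this is where the lemma is actually used (flippable edges may be scarce, cf.\ \reflem{IsolatedVMove}), so the bullet is not proved. The paper's argument is that every edge of $\calF$ has a descendant in $\calF'$ incident to the same five complementary regions (three along its interior and the two ``apex'' regions at its ends), so an $L$--flippable edge has an $L$--flippable descendant, with non-cyclicity of the descendant supplied by the third bullet; checking this for the edges of $F$ through $v$ requires the correct local picture (the old edges are rerouted along the sides of the new bigon and end on the new vertices), which your model does not give. A minor further point: in the third bullet, ``incident to a new vertex'' does not by itself rule out an edge loop (a loop based at a new vertex is incident to a new vertex); you should instead observe that each edge without an ancestor has its two endpoints at distinct vertices of the local picture, as in the paper's count of the four ancestor-free edges.
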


\begin{proof}
The first conclusion follows since no new pair of regions comes into contact as a result of the V-move or its inverse.
For the second conclusion, note that an edge of $\calF$ and its descendant in $\calF'$ (\refdef{Ancestor}) are incident to the same complementary regions.
For the third conclusion, note that
there are four edges in $\calF'$ without ancestors; none of these are cyclic. 
If some other edge in  $\calF'$ is cyclic then its ancestor in $\calF$ is also cyclic.
\end{proof}

\subsection{Avoiding cyclic edges}

As usual, we assume that $M$ is a compact, connected three-manifold with boundary.

\begin{lemma}
\label{Lem:VMoveCyclic}
Suppose that $e$ is a cyclic edge of $\calF$ with both endpoints at $v$.
Then there is an arc $\delta_+$ as in \refdef{VMove} with the additional hypothesis that it meets $e$ in exactly one point.
Moreover, the V-move at $v$ along $\delta_+$ gives a foam $\calF'$ with fewer cyclic edges than $\calF$.
\end{lemma}

\begin{proof}
There are six possibilities for $\delta_+$ in \refdef{VMove}.
One of these meets $e$ in two points and one of these meets $e$ in zero points.
We choose one of the four remaining arcs for $\delta_+$.
The cyclic edge $e$ is then destroyed by the vertices added by the V-move.
See \reffig{VMove}.
By \reflem{VMoveL}, no new cyclic edges are created.
\end{proof}

\begin{lemma}
\label{Lem:RemoveCyclic}
Suppose that $\calF$ is an $L$--essential foam in $M$.
Suppose that $\calF$ contains an $L$--flippable edge.
Then there is a sequence of $L$--essential foams $\calF = \calF_0, \ldots, \calF_n$ where 
\begin{itemize}
\item each foam is related to the next by a 0-2 move, 
\item each foam contains an $L$--flippable edge, and
\item $\calF_n$ has no cyclic edges.
\end{itemize}
\end{lemma}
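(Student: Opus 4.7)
The plan is to induct on the number of cyclic edges of $\calF$, removing them one at a time by V-moves (which are a special kind of 0-2 move). For the base case, if $\calF$ has no cyclic edges we take $n = 0$. For the inductive step, pick any cyclic edge $e$ of $\calF$, with both endpoints at a vertex $v$. Let $\calN$ be a small regular neighbourhood of $v$. Since $\calF$ is dual to a triangulation, the local structure of $\calF$ at $v$ is the cone on the $1$-skeleton of a tetrahedron, so $\bdy(\calF \cap \calN)$ is combinatorially a copy of $K_4$ embedded on the sphere $\bdy \calN$. The four vertices of this graph are the four edges of $\calF$ incident to $v$; two of them, call them $p$ and $q$, are the two ends of $e$.

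The six edges of a tetrahedron come in three pairs of opposite edges, each pair inducing a partition of the four vertices into two pairs. Exactly one of these partitions groups $\{p,q\}$ together; the other two place $p$ and $q$ in different pairs. Choose one of these latter partitions and take $(\delta_+, \delta_-)$ to be the corresponding opposite edge pair. Then exactly one endpoint of $\delta_+$ lies on $e$, so the V-move at $v$ along $\delta_+$ produces a foam $\calF_1$ with strictly fewer cyclic edges than $\calF$, by \reflem{VMoveCyclic}. By \reflem{VMoveL}, this $\calF_1$ is $L$-essential and still contains an $L$-flippable edge, and by \refdef{VMove} the V-move is a 0-2 move. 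Since $\calF$ has finitely many edges to begin with, iterating this process terminates after finitely many steps and yields the desired sequence $\calF = \calF_0, \calF_1, \ldots, \calF_n$ of $L$-essential foams, each related to the next by a 0-2 move, each containing an $L$-flippable edge, and with $\calF_n$ free of cyclic edges.

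The only subtle point is verifying that an opposite pair $(\delta_+, \delta_-)$ satisfying the hypothesis of \reflem{VMoveCyclic} is always available, and this is settled by the elementary combinatorial observation about the three pairs of opposite edges of a tetrahedron. Beyond this, the argument is a routine assembly of the existing lemmas on V-moves, so I do not expect a genuine obstacle.
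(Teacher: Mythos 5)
Your proposal is correct and takes essentially the same route as the paper: repeatedly apply \reflem{VMoveCyclic} to strictly reduce the number of cyclic edges, with \reflem{VMoveL} guaranteeing that $L$--essentiality and the existence of an $L$--flippable edge persist and that no new cyclic edges appear. Your explicit check, via the three opposite-edge pairs of the tetrahedral link, that a direction $\delta_+$ with exactly one endpoint on $e$ always exists is a point the paper leaves implicit, and it is verified correctly.
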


\begin{proof}
We repeatedly apply \reflem{VMoveCyclic}.
By \reflem{VMoveL}, each resulting foam is $L$--essential and has an $L$--flippable edge.
\end{proof}

The following is a refinement of \refthm{ConnectivityWith0-2}.

\begin{lemma}
\label{Lem:ConnectivityNoCyclic}
Suppose that the labelling $L$ has infinite image.
Suppose that $\calF$ and $\calF'$ are $L$--essential foams in $M$, each with no cyclic edges.
Then there is a path $\calF = \calF_0, \ldots, \calF_n = \calF'$ of $L$--essential foams without cyclic edges where each foam is related to the next by a 2-3, 3-2, 0-2, or 2-0 move.
\end{lemma}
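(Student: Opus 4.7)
The plan is to deduce \reflem{ConnectivityNoCyclic} from \refthm{ConnectivityWith0-2} by modifying any given path to eliminate cyclic edges from intermediate foams. The central tool is the V-move: by \reflem{VMoveL}, V-moves preserve $L$-essentiality and create no cyclic edges, while by \reflem{VMoveCyclic} they can destroy existing cyclic edges.

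First, I apply \refthm{ConnectivityWith0-2} to obtain a path $\calF = \calG_0, \calG_1, \ldots, \calG_m = \calF'$ of $L$-essential foams connected by 2-3, 3-2, 0-2, and 2-0 moves. I then proceed by induction on the total cyclic-edge count $N$ summed over the intermediate foams $\calG_1, \ldots, \calG_{m-1}$. If $N = 0$ the path already satisfies the conclusion, so assume $N > 0$.

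Let $i$ be the smallest index with $\calG_i$ containing a cyclic edge $e$. Since $\calG_{i-1}$ is cyclic-edge-free (either by minimality of $i$ or as the endpoint $\calF$), the move $\calG_{i-1} \to \calG_i$ must have created $e$. I check separately that 0-2 moves never create cyclic edges — the two new vertices introduced by a 0-2 move are distinct and bound a new bigon, and subdivided edges still have distinct endpoints — so the creating move is of type 2-3, 3-2, or 2-0. In each case, $e$ arises from an identification of two vertices of $\calG_{i-1}$ that are distinct there but become glued by the move. I insert a V-move at one of these vertices in $\calG_{i-1}$, in the direction that separates the pair, producing $\calG'_{i-1}$; by \reflem{VMoveL} this is $L$-essential and still cyclic-edge-free. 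The analog of the original move can then be performed on $\calG'_{i-1}$ to produce $\calG'_i$, in which the descendant of $e$ has distinct endpoints and hence is not cyclic. Finally, I apply a 2-0 move (the inverse V-move) to return to a foam matching $\calG_i$ away from the V-move support, but with the cyclic edge $e$ eliminated; I then continue with the original sequence toward $\calG_{i+1}$. The total cyclic-edge count $N$ strictly decreases, completing the induction.

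The principal obstacle is the case-by-case verification that the V-move can always be chosen compatibly with the subsequent original move. For each of the three creating move types, I must identify which vertex of $\calG_{i-1}$ becomes identified with another by the move, apply a V-move there in the correct direction, and confirm that the modified move can still be performed and produces the desired effect. The locality of all four moves — each supported in a small regular neighbourhood — provides the needed flexibility to place the V-move compatibly, and the infinite image of $L$, via \reflem{LInfinite5Labels}, supplies the label-distinctness required to preserve $L$-essentiality throughout the modification.
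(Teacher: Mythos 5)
Your overall strategy (take the path from \refthm{ConnectivityWith0-2} and locally insert V-move-type modifications to prevent cyclic edges) is the right one, but two steps fail. First, your separate check that ``0-2 moves never create cyclic edges'' is false. The two endpoints of the arc $\delta$ may lie on the \emph{same} edge of the foam (the boundary of a face can traverse an edge more than once), and with the actual combinatorics of the 0-2 move only one end of the new bigon carries the two new vertices, while at the other end the old edge merely turns a corner into a long side of the new face. Consequently an edge can run from a new vertex, out along the old edge, around the corner, and back to that same new vertex: a cyclic edge based at a new vertex. The paper points this out explicitly (see the remark in the proof of \reflem{Do0-2ManyEdges} about the case $\phi_M(e_0)=\phi_M(e_N)$) and devotes one of the four cases of its proof of \reflem{ConnectivityNoCyclic}, with \reffig{0-2BeforeDestroyCyclic} and \reffig{0-2AfterDestroyCyclic}, to exactly this situation. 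Since your argument never repairs cyclic edges created by 0-2 moves, the path you produce need not be cyclic-edge-free.

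Second, your rejoining step is incoherent. After inserting the V-move and performing the analogue of the move $m$, you apply the inverse V-move ``to return to a foam matching $\calG_i$ away from the V-move support, but with the cyclic edge $e$ eliminated.'' But $e$ has both ends at the vertex $v$ inside the V-move support; the only reason it is not cyclic in the modified foam is the presence of the V-move bigon, so undoing the V-move recreates $e$ and you are back at $\calG_i$ with no decrease in your count $N$. If instead you keep the bigon, you have not rejoined the original sequence, and you must say how to continue: later moves go through only because they avoid a neighbourhood of $v$, and this breaks down exactly at the move of the original sequence that destroys $e$ (which may act on $e$ itself and hence cannot be performed verbatim in the modified foam). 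The paper's proof handles this by carrying the extra V-move at $v$ forward unchanged until the step $\calG_{q-1}\to\calG_q$ that destroys $e$, and there merging back onto the original path by running the reversed version of the same local construction. Without this mechanism (and without the 0-2 case above), your induction does not close.
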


\begin{proof}
Let $\calF = \calG_0, \ldots, \calG_n = \calF'$ be the path of $L$--essential foams (dual to triangulations) given to us by \refthm{ConnectivityWith0-2}.
To be precise in our counting, we say that a move destroys a cyclic edge when the move alters all neighbourhoods of the edge.
A move creates a cyclic edge when the reverse move destroys it.
Note that a single move can destroy one cyclic edge and create another.
Some finite number of cyclic edges are each created and then destroyed in the sequence, since $\calF$ and $\calF'$ have no cyclic edges.
We modify this sequence recursively, so that after each modification one fewer cyclic edge is created.
After a finite number of these modifications, we have the desired sequence.

Suppose that a cyclic edge $e$, with both ends at a vertex $v$, is created by the move $m$ transforming $\calG_p$ into $\calG_{p+1}$.
There are four cases to consider, as the move $m$ is a 2-3, 3-2, 0-2, or 2-0 move.
In each case, we apply a 0-2 move along an arc $\delta$ before the move $m$ to avoid making the cyclic edge.
In some cases, we then apply a second 0-2 move along an arc $\delta'$ followed by a 2-0 move to undo the first 0-2 move.
The foam resulting from this process also results from performing $m$ and then destroying the cyclic edge by performing a V-move at $v$. 
(We choose $\delta$ and $\delta'$ so that all intermediate foams remain $L$--essential.)
As the V-move does not alter the foam outside of a small neighbourhood of $v$, it does not affect later moves in our sequence, until we reach a move that destroys the cyclic edge $e$.
Suppose that this occurs between $\calG_{q-1}$ and $\calG_{q}$.
Viewing the sequence in reverse, between $\calG_{q}$ and $\calG_{q-1}$ we create the cyclic edge.
We can therefore get from $\calG_{q-1}$ to $\calG_{q}$ using the reverse of one of the same four constructions we use for the forward direction.

\begin{figure}[htbp]
\subfloat[Before 2-3 move.]{
\labellist
\hair 2pt \small
\pinlabel $e$ [l] at 294 340
\pinlabel $e$ [l] at 277 80
\pinlabel $\delta$ [b] at 180 27
\endlabellist
\includegraphics[height = 4.5cm]{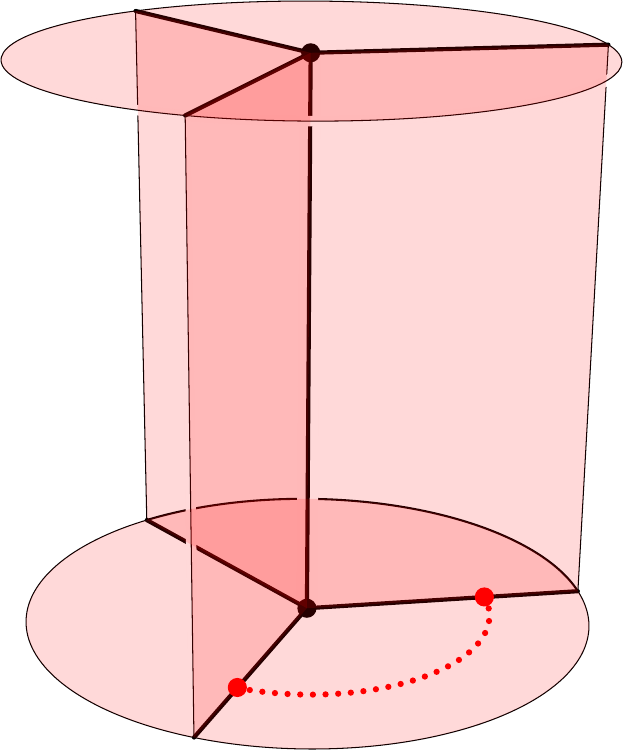}
\label{Fig:2-3BeforeDestroyCyclic}
}
\qquad
\subfloat[After 2-3 move.]{ 
\labellist
\hair 2pt \small
\pinlabel $e$ [l] at 294 340
\pinlabel $e$ [l] at 277 80
\pinlabel $v$ [l] at 190 200
\pinlabel $\delta$ [b] at 180 27
\pinlabel $\delta'$ [tr] at 175 157
\endlabellist
\includegraphics[height = 4.5cm]{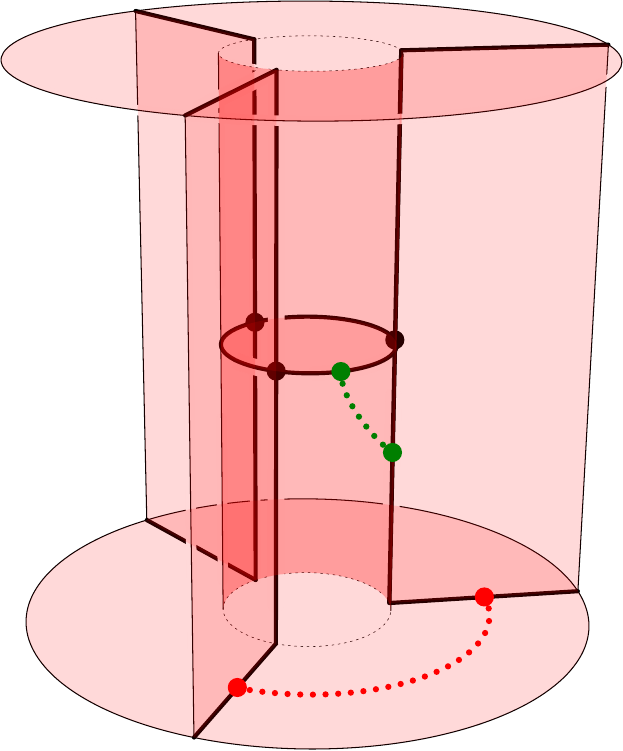}
\label{Fig:2-3AfterDestroyCyclic}
}

\subfloat[Before 3-2 move.]{
\labellist
\hair 2pt \small
\pinlabel $e$ [br] at 70 108
\pinlabel $e$ [l] at 277 80
\pinlabel $\delta$ [b] at 180 27
\endlabellist
\includegraphics[height = 4.5cm]{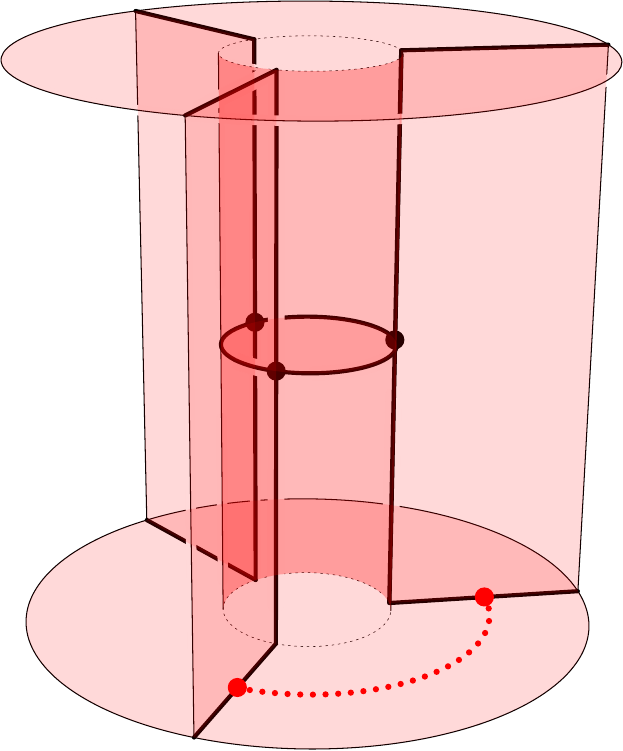}
\label{Fig:3-2BeforeDestroyCyclic}
}
\qquad
\subfloat[After 3-2 move.]{
\labellist
\hair 2pt \small
\pinlabel $e$ [br] at 70 108
\pinlabel $e$ [l] at 277 80
\pinlabel $v$ [bl] at 150 72
\pinlabel $\delta$ [b] at 180 27
\endlabellist
\includegraphics[height = 4.5cm]{Figures/2-3_before_destroy_cyclic}
\label{Fig:3-2AfterDestroyCyclic}
}

\subfloat[Before 0-2 move.]{
\labellist
\hair 2pt \small
\pinlabel $e$ [l] at 455 278
\pinlabel $e$ [l] at 440 45
\pinlabel $w$ [r] at 100 75
\pinlabel $e'$ [t] at 55 25
\pinlabel $\delta$ [b] at 208 30
\endlabellist
\includegraphics[height = 3.7cm]{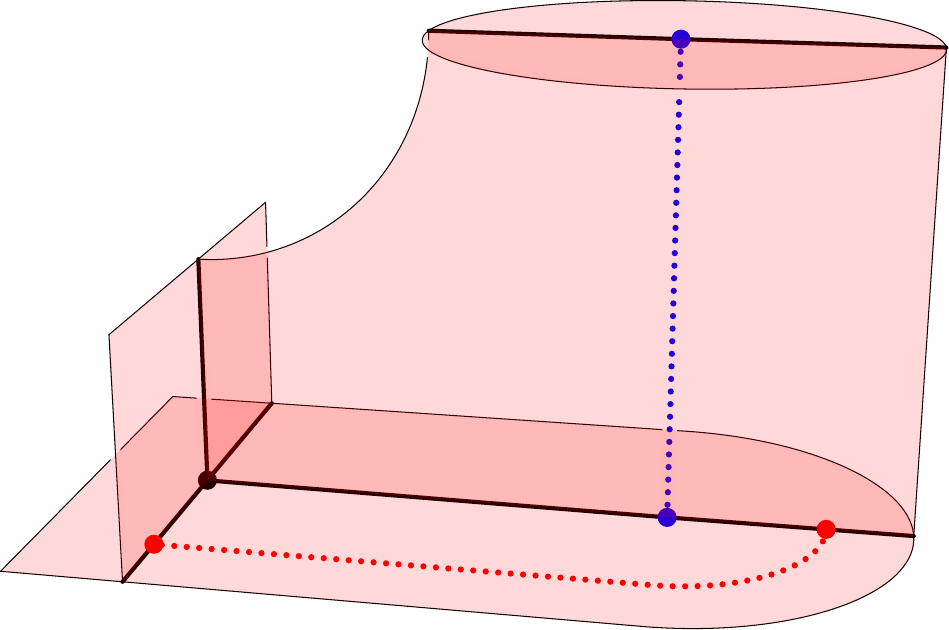}
\label{Fig:0-2BeforeDestroyCyclic}
}
\qquad
\subfloat[After 0-2 move.]{
\labellist
\hair 2pt \small
\pinlabel $e$ [l] at 455 278
\pinlabel $e$ [l] at 440 45
\pinlabel $v$ [l] at 362 163
\pinlabel $w$ [r] at 100 75
\pinlabel $e'$ [t] at 55 25
\pinlabel $\delta$ [b] at 208 30
\pinlabel $\delta'$ [tr] at 330 135
\endlabellist
\includegraphics[height = 3.7cm]{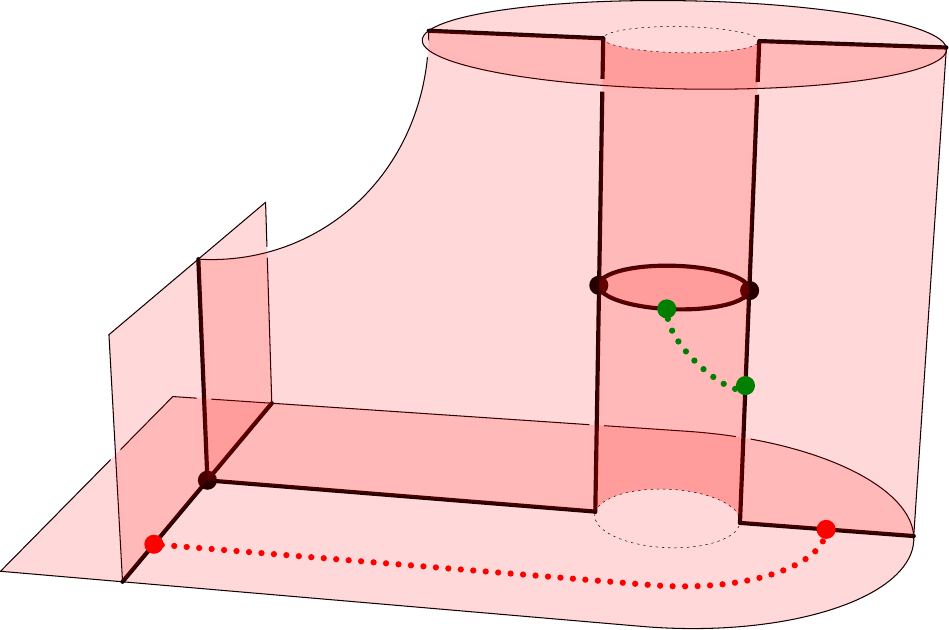}
\label{Fig:0-2AfterDestroyCyclic}
}
\caption{Steps to avoid creating a cyclic edge.}
\label{Fig:DestroyCyclic}
\end{figure}

Suppose that $m$ is a 2-3 move. 
See \reffig{2-3BeforeDestroyCyclic}. 
Breaking symmetry, in $\calG_{p+1}$ the two edge-ends labelled $e$ are connected together at the vertex $v$ after the 2-3 move, as shown in \reffig{2-3AfterDestroyCyclic}.
Before $m$ we apply a 0-2 move along the arc marked $\delta$ in \reffig{2-3BeforeDestroyCyclic}.
This move does not itself introduce another cyclic edge.
Next we apply $m$.
Note that $m$ no longer creates a cyclic edge because of the two extra vertices formed by the 0-2 move along $\delta$.
Next, we apply a 0-2 move along the arc marked $\delta'$ in \reffig{2-3AfterDestroyCyclic}.
Last, we undo the 0-2 move that occurred along $\delta$.

Now suppose that $m$ is a 3-2 move.
See \reffig{3-2BeforeDestroyCyclic}. 
Breaking symmetry, in $\calG_{p+1}$ the two edge-ends labelled $e$ are connected together at the vertex $v$ after the 3-2 move, as shown in \reffig{3-2AfterDestroyCyclic}.
Before $m$ we apply a 0-2 move along the arc marked $\delta$ in \reffig{3-2BeforeDestroyCyclic}.
Again, this move does not itself introduce another cyclic edge.
We then apply $m$.
In this case we do not need to do a second 0-2 move because we already have the result of a V-move at $v$.

Now suppose that $m$ is a 0-2 move.
See \reffig{0-2BeforeDestroyCyclic}. 
Breaking symmetry, in $\calG_{p+1}$ the two edge-ends labelled $e$ are connected together at the vertex $v$ after the 0-2 move, as shown in \reffig{0-2AfterDestroyCyclic}.
Following the edge in the opposite direction, we find a vertex $w$. 
Breaking symmetry again, we find an edge $e'$ incident to $w$ which is not $e$.
Before $m$ we apply a 0-2 move along the arc marked $\delta$ in \reffig{0-2BeforeDestroyCyclic}.
We then apply $m$.
Next, we apply a 0-2 move along the arc marked $\delta'$ in \reffig{0-2AfterDestroyCyclic}.
Last, we undo the 0-2 move that occurred along $\delta$.

Finally, suppose that $m$ is a 2-0 move.
(There are multiple ways in which such a 2-0 move can create a cyclic edge depending on how the four edge ends exiting \reffig{0-2After} are connected to each other.)
In this case, the vertex $v$ exists before $m$ is applied.
Before applying $m$, we apply a 0-2 move to implement a V-move at $v$, chosen as in \reflem{VMoveCyclic}.
Again we do not need to do a second 0-2 move because we already have the result of a V-move at $v$.
\end{proof}

The following is the final tool we use to prove \refthm{Connectivity}.

\begin{proposition}
\label{Prop:Make0-2}
Suppose that the labelling $L$ has infinite image.
Suppose that $\calF$ is an $L$--essential foam in $M$.
Suppose that applying a 0-2 move along an arc $\delta$ in a face of $\calF$ produces $\calF[\delta]$, which is also $L$--essential.
Suppose that each of $\calF$ and $\calF[\delta]$ has an $L$-flippable edge.
Then there is a path $\calF = \calF_0, \ldots, \calF_n = \calF[\delta]$ of $L$--essential foams where each foam is related to the next by a 2-3 or 3-2 move.
\end{proposition}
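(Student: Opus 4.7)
The plan is to mimic Matveev's local implementation of a 0-2 move as a short sequence of bistellar moves \cite[Lemma~1.2.11]{Matveev07}, but with the added requirement that every intermediate foam remains $L$--essential. First I would carry out the local construction in \refsec{FollowingMatveev}: apply a 2-3 move at an edge incident to $\delta$, which after a subsequent 3-2 move realises the 0-2 effect along $\delta$. For the intermediate foam to be $L$--essential, the face created by the 2-3 move must separate complementary regions carrying distinct labels in the universal cover. This reduces to a combinatorial condition on the labels around $\delta$: there must exist a vertex, in a prescribed position in the neighbourhood, whose label differs from a prescribed collection of nearby labels. The precise statements of these local constructions are \reflem{Do0-2ThreeSides} and \reflem{Do0-2ManyEdges}, which handle the local combinatorial patterns that can occur near $\delta$.

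The main obstacle is the \emph{locally frozen} configurations of \refsec{LocallyFrozen}: here every nearby 2-3 move would introduce a face whose two incident regions carry equal labels, and no nearby 3-2 move exists. To resolve this, following \refsec{DistantLabels}, I would construct the \emph{augmented 2-3 move} and the \emph{nature reserve moves}, each realised as a short composition of 2-3 and 3-2 moves that preserves $L$--essentiality. The existence of suitably labelled vertices somewhere in $\cover{\calF}$ is guaranteed by \reflem{LInfinite5Labels}, since $L$ has infinite image. These compound moves serve as transport tools: they can drag a distant vertex with a useful label through $M$, across edges and faces, while the ``nature reserve'' protects the combinatorial structure we need to keep intact, and every intermediate foam stays $L$--essential.

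The final step, carried out in \refsec{ParallelSequences}, is to run two sequences of transport moves in parallel, one starting at $\calF$ and the other at $\calF[\delta]$. The sequences are chosen to commute with the 0-2 move along $\delta$: each transport move applied on the $\calF$ side has a well-defined counterpart on the $\calF[\delta]$ side that acts identically away from $\delta$. Both sequences terminate at foams whose neighbourhoods of $\delta$ now contain a suitably labelled distant vertex, so that the hypotheses of \reflem{Do0-2ManyEdges} are satisfied; a single application of that lemma then bridges the two sides using 2-3 and 3-2 moves, completing the desired path from $\calF$ to $\calF[\delta]$.

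The hard part is verifying the commutation in \refsec{ParallelSequences}. I must check that, despite the extra vertices and faces introduced near $\delta$ by the 0-2 move, every augmented 2-3 move and every nature reserve move in the $\calF$ sequence has a well-defined analogue in the $\calF[\delta]$ sequence, that both analogues preserve $L$--essentiality, and that after the two parallel sequences the resulting foams differ only in the very specific local pattern expected by \reflem{Do0-2ManyEdges}. Controlling this bookkeeping, while simultaneously ensuring that no inessential face is ever created by the transport moves interacting with the structure already present near $\delta$, is the heart of the argument.
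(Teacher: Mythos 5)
Your outline follows the paper's strategy exactly---local implementations via \reflem{Do0-2ThreeSides} and \reflem{Do0-2ManyEdges}, transport of a distant label by augmented 2-3 and nature reserve moves, and parallel sequences commuting with the 0-2 move---but it stops precisely where the content begins, and the gaps you leave are genuine. First, you give no mechanism for the commutation you flag as ``the hard part.'' In the paper this rests on concrete machinery you never supply: the nature reserve edges, installed during a preliminary \emph{loosening stage} (\refsec{LooseningStage}), which protect the endpoints of $\phidelta$ (and hence the snakelet created by the 0-2 move) from every subsequent augmented 2-3 move; and the handle-structure/carrying bookkeeping of \reflem{AugmentedOmnibus} and \reflem{NatureReserveOmnibus}, which preserves connectivity of the one-skeleton inside each zero-handle minus the nature reserve and so makes the tunnelling of \refsec{Tunnelling} and \refsec{BringDistant} possible. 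You also omit the loosening stage's other role: destroying \emph{all} cyclic edges of the original foam. Without that, the hypotheses of \reflem{Do0-2ManyEdges} (non-cyclicity of $e_{k-1}$, $e_k$, $e_{k+1}$; $\phi_M(e_k)$ distinct from the other edges of the side; $L(E)$ appearing exactly once as an outer-region label for both $s$ and $f_s$; and the single contact edge $e_E$, which needs the extra V-moves of \refsec{UniqueContact}) cannot be verified, so the final ``bridge'' application of that lemma is not justified by anything in your plan.

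Second, you miss an essential case split. When both endpoints of $\phidelta$ lie on the \emph{same} edge of $\calF$, the transport scheme must use the pair nature reserve move, and its compatibility with the 0-2 move (the dashed arrow of \refdia{CommutingDiagram}, i.e.\ \reflem{PairNatureReserveMoveDelta}) cannot be obtained by the local V-move arguments that work in the singleton case: in $\calG[\delta]$ the blue snakelet sits astride the edge one wants to modify. The paper handles this by a bootstrap---first proving the proposition under \refhyp{OneEndpoint}, then applying that special case to four auxiliary 0-2 moves (the cores of the red and green snakelets) to realise the pair nature reserve move on the $\calG[\delta]$ side. In your proposal the ``well-defined counterpart on the $\calF[\delta]$ side'' of each transport move is simply asserted, and in this case it does not exist without that extra argument; so the proof as written would fail exactly there.
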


\begin{proof}[Proof of \refthm{Connectivity}]
Let $\calF$ and $\calF'$ be foams dual to the triangulations $\calT$ and $\calT'$ given in the statement of \refthm{Connectivity}.
We apply \reflem{RemoveCyclic} to $\calF$ and to $\calF'$ to produce sequences of $L$--essential foams ending at $\calG$ and $\calG'$ say (respectively).
Thus $\calG$ and $\calG'$ have no cyclic edges.
\reflem{RemoveCyclic} also tells us that every foam in these sequences has an $L$--flippable edge.

We apply \reflem{ConnectivityNoCyclic} to produce a sequence of $L$--essential foams without cyclic edges connecting $\calG$ to $\calG'$.
By \reflem{Exists2-3Move}, every foam in the sequence connecting $\calG$ to $\calG'$ has an $L$--flippable edge.

Concatenating the three sequences together, we obtain a sequence of $L$--essential foams connecting $\calF$ to $\calF'$.
Consecutive foams are related by a 2-3, 3-2, 0-2, or 2-0 move, and each foam contains an $L$--flippable edge.
We then use \refprop{Make0-2} to replace each 0-2 or 2-0 move in the sequence with a sequence of 2-3 and 3-2 moves.
\end{proof}


The proof of \refprop{Make0-2} is quite difficult;
it begins in \refsec{FollowingMatveev} and takes up the remainder of the paper.

\subsection{Connecting the isolated}

We can now connect isolated triangulations to $\TT^\circ(M,L)$ using V-moves.

\begin{lemma}
\label{Lem:IsolatedVMove}
Suppose that $L \from \Delta_M \to \calL$ has infinite image.
Suppose that $\calF$ is an $L$--essential foam in $M$. 
Suppose that $\calF$ contains no $L$--flippable edges.
Then there is a V-move on $\calF$ that results in an $L$--essential foam $\calF'$ which contains an $L$--flippable edge.
\end{lemma}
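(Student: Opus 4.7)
The plan is as follows. Observe first that the hypotheses force $\calF$ to contain cyclic edges: by the contrapositive of \reflem{Exists2-3Move}, an $L$--essential foam whose labelling has infinite image and which contains no $L$--flippable edge must contain a cyclic edge. Now apply \reflem{LInfinite5Labels} to produce an edge $\tilde e$ of $\cover\calF$ incident to five complementary regions with pairwise distinct labels. Since $\phi_M(\tilde e)$ is not $L$--flippable, the same reasoning as in the proof of \reflem{Exists2-3Move} forces $\tilde e$ to be cyclic; equivalently, $e \defeq \phi_M(\tilde e)$ is an edge loop at some vertex $v$ of $\calF$.

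Let $V_1, V_2$ be the two vertices of the link tetrahedron at $v$ corresponding to the two ends of $e$, and let $V_3, V_4$ be the other two. I then choose a V-move direction $\{\delta_+, \delta_-\}$ at $v$ that separates $V_1$ from $V_2$: of the three pairs of opposite edges of the link tetrahedron, two such pairs accomplish this, the exception being $\{V_1 V_2, V_3 V_4\}$. Taking for example $\delta_+ = V_1 V_3$ and $\delta_- = V_2 V_4$ and performing the corresponding V-move yields $\calF'$, which is $L$--essential by \reflem{VMoveL}. In the universal cover this move inserts a new vertex $\tilde f$ on the $V_1$-end of $\tilde e$ at $\tilde v$, splitting $\tilde e$ into a short stub and a long sub-edge $\tilde e'$ running from $\tilde f$ to the unmoved endpoint $\tilde v''$ (at the $V_2$-direction there, since $V_2 \notin \{V_1, V_3\}$). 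These endpoints are distinct, so $\tilde e'$ is non-cyclic.

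Finally, to conclude that $\phi_M(\tilde e')$ is $L$--flippable it suffices to show that the two ``far'' complementary regions of $\tilde e'$ in $\cover{\calF'}$ carry distinct labels. At $\tilde v''$ the V-move leaves the $V_2$-direction (and hence the sector opposite it) alone, so the far region there agrees with that of $\tilde e$ in $\cover\calF$, one of the five distinctly labelled regions. At $\tilde f$ the far region sits in the sector opposite $\tilde e'$ in the new link tetrahedron; this sector is bounded by the face wing of the newly inserted bigon, so the region occupying it must be one of the two that the bigon separates. Since the bigon is attached along $\delta_+$ to the face of $\calF$ with face wing $V_1 V_3$ at $\tilde v$, it separates exactly the pair of regions lying on either side of that face, namely those opposite $V_2$ and $V_4$. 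Both of these are interior regions of $\tilde e$, hence among the five distinctly labelled regions, and by the 5-label hypothesis their labels differ from the far label at $\tilde v''$. Thus the 2-3 move on $\tilde e'$ preserves $L$--essentiality and $\phi_M(\tilde e')$ is the desired $L$--flippable edge of $\calF'$.

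The main obstacle is the last step: identifying which global region occupies the sector opposite $\tilde e'$ at the new vertex $\tilde f$. This requires tracking how the V-move reconfigures the link tetrahedron at $\tilde f$ and using the key fact that the inserted bigon is adjacent to precisely the same pair of regions that flank the face of $\calF$ carrying $\delta_+$ at $\tilde v$.
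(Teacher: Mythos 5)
Your route is the same as the paper's: use \reflem{LInfinite5Labels} to find an edge $\tilde e$ of $\cover{\calF}$ meeting regions with five distinct labels, note that $\phi_M(\tilde e)$ must then be an edge loop, perform a V-move whose arc $\delta_+$ meets exactly one end of that loop (as in \reflem{VMoveCyclic}), quote \reflem{VMoveL} for $L$--essentiality, and check that the long descendant $\tilde e'$ is $L$--flippable. The paper leaves that last check implicit; you spell it out, but the key combinatorial claim you use is wrong. The new bigon face produced by a 0-2 move along an arc in a face $g$ is \emph{not} adjacent to the two regions flanking $g$ (your ``regions opposite $V_2$ and $V_4$''); it is adjacent to the two \emph{outer} regions of the two edges containing the endpoints of the arc --- here the sectors opposite $V_3$ and opposite $V_1$ at $v$. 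If your identification were correct, every 0-2 move applied to an $L$--essential foam would automatically preserve $L$--essentiality, contrary to the paper's treatment of this as a genuine hypothesis; compare the proof of \reflem{Do0-2ThreeSides}, where the inequality $L(A) \neq L(B)$ for the outer regions $A$, $B$ at the two ends of $\delta$ is deduced precisely from the assumed essentiality of $\calF[\delta]$. (For a V-move the two outer regions for $\delta_+$ are exactly the two regions flanking the face containing $\delta_-$, which are already in contact --- this is why \reflem{VMoveL} gets essentiality for free, and it may be the source of your confusion.)

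Fortunately your conclusion survives the correction, and in fact becomes cleaner. Your claim that the sector of $\tilde f$ opposite $\tilde e'$ is bounded by a wing of the new bigon, so is filled by one of the two regions the bigon separates, is correct; with the correct pair, that region is the outer region of the $V_3$-end, i.e.\ the sector opposite $V_1$ at $\tilde v$, which is exactly the old ``far'' region of $\tilde e$ at that endpoint. Thus the two end regions of $\tilde e'$ are the same pair as the two end regions of $\tilde e$; both lie among the five distinctly labelled regions, so their labels differ, $\tilde e'$ is non-cyclic, and $\phi_M(\tilde e')$ is $L$--flippable --- which is the paper's unstated verification. So the proof stands once you repair the sentence ``it separates exactly the pair of regions lying on either side of that face'': as written that sentence is false, even though the weaker fact you actually need (both regions adjacent to the new bigon are among the five distinctly labelled regions incident to $\tilde e$, hence carry labels different from the far label at $\tilde v''$) remains true.
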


\begin{proof}
By \reflem{LInfinite5Labels}, there is an edge $e$ of $\cover{\calF}$ that is incident to complementary regions with five distinct labels. 
Since $\calF$ contains no $L$--flippable edges, $\phi_M(e)$ must be cyclic.
Let $\calN$ be a small regular neighbourhood of $v$, the unique endpoint of $\phi_M(e)$.
Let $\delta_+$ be as given in \reflem{VMoveCyclic}.
We obtain $\calF'$ by performing a 0-2 move along $\delta_+$.
By \reflem{VMoveL}, the foam $\calF'$ is $L$--essential. 
Also, the descendant of $\phi_M(e)$ not contained in $\calN$ is $L$--flippable.
\end{proof}

\begin{remark}
\label{Rem:MoreIsolated}
Suppose that, in the proof of \reflem{IsolatedVMove}, we instead choose $\delta_+$ to meet $e$ two or zero times.
Then the resulting foam $\calF'$ again has no $L$--flippable edge.
Thus if there is one isolated $L$--essential foam, there are infinitely many.
\end{remark}

\section{Following Matveev}
\label{Sec:FollowingMatveev}

Without the $L$--essentiality condition, \refprop{Make0-2} is proved by Matveev (combining Lemma~1.2.11 and Proposition~1.2.8 of~\cite{Matveev07}).

We will give a series of constructions that prove \refprop{Make0-2} under increasingly general circumstances.
Following \cite[Proposition~1.2.8]{Matveev07}, we begin with the construction of a V-move in \reflem{ImplementVMove}, where the arc $\delta$ that the 0-2 move is to be applied along cuts off a single vertex $v$ of its model face.

\reflem{ImplementVMove} requires that some edge incident to the vertex $v$ is $L$--flippable.
In the completely general case there may be no $L$--flippable edge anywhere near $\delta$, so the general construction must start work at such an edge and work towards $\delta$.
Thus we require a ``pre-processing'' stage (see Sections~\ref{Sec:LooseningMove} and~\ref{Sec:BringDistant}) where we generate good circumstances around $\delta$ that allows us to perform the 0-2 move.
\reflem{Do0-2ManyEdges} gives an implementation of a 0-2 move under these good circumstances.
That lemma relies on \reflem{Do0-2ThreeSides}, which implements a 0-2 move under even more stringent circumstances.

\begin{lemma}
\label{Lem:ImplementVMove}
Suppose that applying a V-move to a vertex $v$ of an $L$--essential foam $\calF$ produces $\calF'$.
Suppose that some edge $e$ incident to $v$ is $L$--flippable.
Then there is a sequence of 2-3 and 3-2 moves from $\calF$ to $\calF'$ such that all foams are $L$--essential.
\end{lemma}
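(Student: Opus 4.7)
The plan is to follow Matveev's construction of the V-move via bistellar moves (\cite[Proposition~1.2.8]{Matveev07}), verifying at each step that the intermediate foam is $L$--essential. Matveev's construction consists of a fixed finite sequence of 2-3 and 3-2 moves supported in a neighbourhood of $v$. Since a V-move adds two vertices to the foam (equivalently, two tetrahedra to the dual triangulation), this sequence necessarily contains exactly two more 2-3 moves than 3-2 moves.

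The first step of Matveev's sequence is a 2-3 move along the edge $e$ incident to $v$. This is precisely the hypothesis of the lemma, so the resulting foam is $L$--essential by $L$--flippability of $e$. Let $v'$ denote the other endpoint of $e$. The 2-3 move replaces the neighbourhoods of $v$ and $v'$ in $\calF$ by a configuration of three new vertices on the boundary of a fresh triangular face $f^*$, together with further edges joining each new vertex both to the former neighbours of $v$ and to those of $v'$.

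The remaining moves in Matveev's sequence are all supported in a small neighbourhood of $f^*$. To check that each intermediate foam is $L$--essential, I track the complementary regions incident to each face. Every face of an intermediate foam is either a descendant of a face of $\calF$, or a face created during the sequence after the initial 2-3 move. In the first case, the two incident complementary regions descend from distinct regions of $\calF$, so their labels are distinct by $L$--essentiality of $\calF$. In the second case, the two incident regions are descendants of regions of $\cover{\calF}$ incident to some lift of $e$; their labels are distinct because $e$ is $L$--flippable (the 2-3 move along $e$, and by extension all the bookkeeping of regions near $e$, preserves $L$--essentiality).

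The main obstacle is purely combinatorial: one must fix an explicit version of Matveev's sequence and verify $L$--essentiality case by case. This is a finite local check involving only a small number of labelled regions near $v$ and $v'$, and it goes through because the $L$--flippability of $e$ guarantees enough label-diversity among the regions incident to $e$ to support every new face that can be introduced by a move supported near the reshuffled configuration around $f^*$.
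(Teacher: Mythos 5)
Your proposal follows essentially the same route as the paper's proof: realise the V-move by Matveev's explicit local sequence (three 2-3 moves followed by a 3-2 move), use the $L$--flippability of $e$ to make the first move safe, and then observe that every subsequent face without an ancestor is created between complementary regions already adjacent near $e$, whose labels are pairwise distinct, so all intermediate foams stay $L$--essential. Two small points that the paper makes explicit and you gloss over: the later moves are actually performable because the edges and faces they use have distinct, newly created vertices (so no edge loops or non-embedded triangles arise), and the label-distinctness you need for the intermediate faces is supplied mostly by $L$--essentiality of $\calF$ itself --- flippability of $e$ only provides the one extra pair, namely the two regions meeting the endpoints of $e$ but not its interior --- so attributing the whole ``label-diversity'' to flippability is slightly misstated, though the ingredients you cite do suffice.
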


\begin{proof}
\reffig{VMoveConstruction} shows how to implement a V-move using three 2-3 moves followed by a 3-2 move.
(Note that if we rotate our picture of the foam around $e$ before applying these moves we also obtain the other two V-moves on $v$.)
By hypothesis, the first move along $e$ is possible and does not introduce an $L$--inessential face.
Each of the subsequent moves takes place on a collection of distinct vertices because those vertices were produced by the previous moves.
Thus each move can be applied.
Moreover, at each step, faces without ancestors are only created between regions that already have a face in common.
Thus the moves do not produce an $L$--inessential face.
\end{proof}

\begin{figure}[htbp]
\labellist
\hair 2pt \small
\pinlabel $e$ [b] at 60 196
\endlabellist
\includegraphics[width = \textwidth]{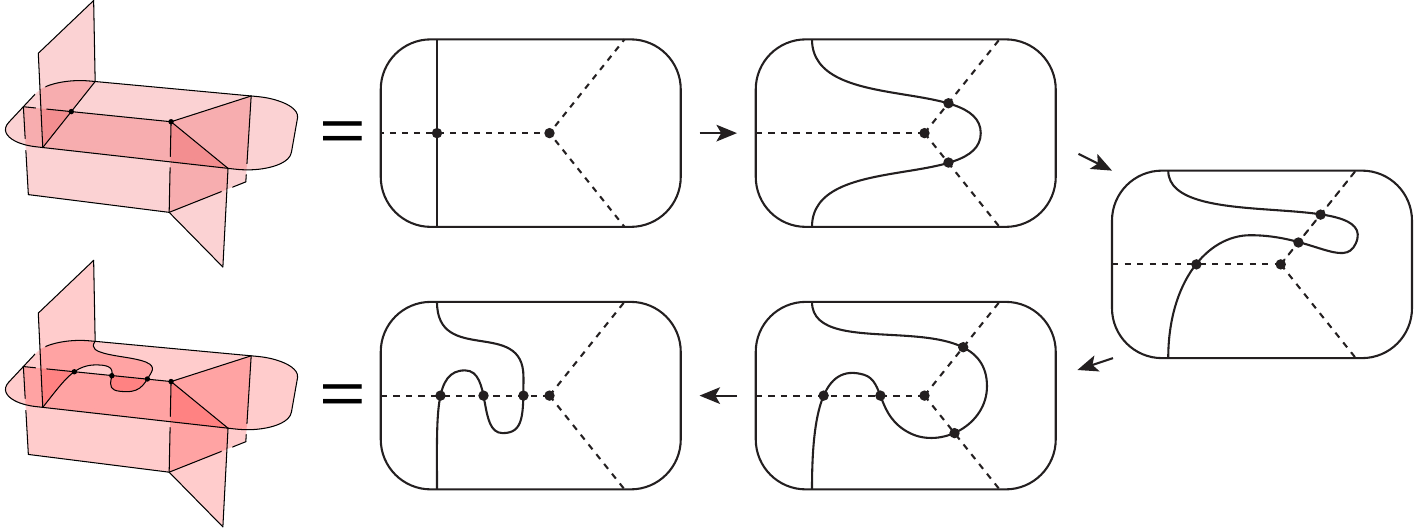}
\caption{A V-move can be implemented by three 2-3 moves followed by a 3-2 move. Adapted from \cite[Figure~1.15]{Matveev07}.}
\label{Fig:VMoveConstruction}
\end{figure}

It will be convenient for drawing pictures and describing the combinatorics to talk about 0-2 moves in the following way. 

\begin{definition}
\label{Def:Snakelet}
Suppose that $\calF$ is a foam in a manifold $M$.
Suppose that $f$ is a co-oriented model face of $\calF$.
Suppose that $\delta$ is an oriented arc properly embedded in $f$ which is disjoint from the vertices of $f$.
(It will be convenient to conflate $\delta$ with its image in $M$.)
Let $\calN$ be a regular neighbourhood of $\delta$ in $M$.
Let $S$ be the component of $\calN - \calF$ meeting the interior of $\delta$ and pointed at by the co-orientation of $\calF$.
We say that $S$ is the \emph{snakelet} generated by $\delta$.

After taking the closure, the boundary of $S$ consists of two bigons 
$b$ and $b'$ and two rectangles $r$ and $r'$.
Suppose that $b$ is the bigon pointed at by $\delta$ and suppose that $r$ is the rectangle contained in $f$.
We call $b$ and $b'$ the \emph{head} and \emph{tail} of $S$ respectively.
We call $r$ and $r'$ the \emph{belly} and \emph{back} of $S$ respectively.
See \reffig{Snakelet}.
\end{definition}

\begin{figure}[htbp]
\subfloat[]{
\labellist
\hair 2pt \small
\pinlabel $\delta$ [r] at 134 200
\endlabellist
\includegraphics[height = 4cm]{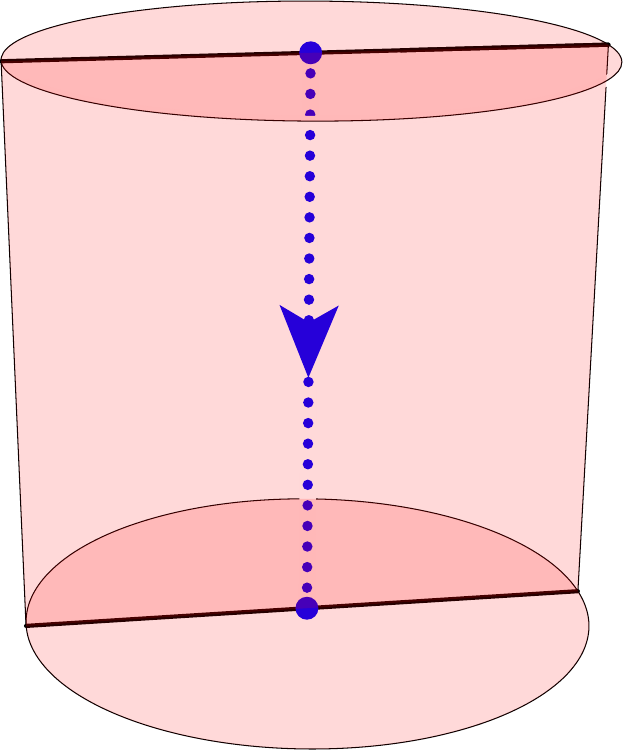}
\label{Fig:SnakeletBefore}
}
\qquad
\subfloat[]{
\includegraphics[height = 4cm]{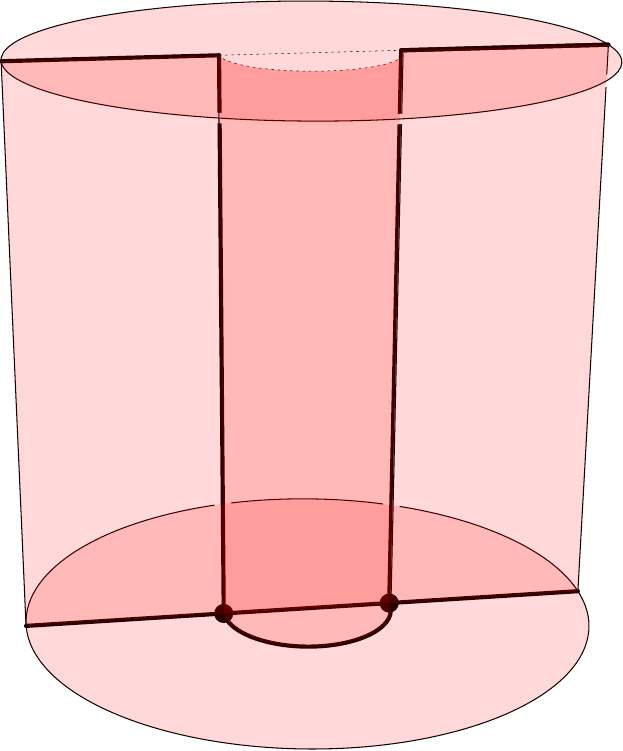}
\label{Fig:SnakeletAfter}
}
\caption{A 0-2 move generated by a snakelet along the arc $\delta$. Since $\delta$ points down, the head of the snakelet is at the bottom of \reffig{SnakeletAfter} while the tail is at the top. Compare with \reffig{0-2}.}
\label{Fig:Snakelet}
\end{figure}

Note that the foam $\calF' = (\calF \cup r') - \interior(b')$ is combinatorially identical to the foam obtained by performing a 0-2 move along $\delta$.

\begin{figure}[htbp]
\subfloat[]{
\includegraphics[width = 0.31\textwidth]{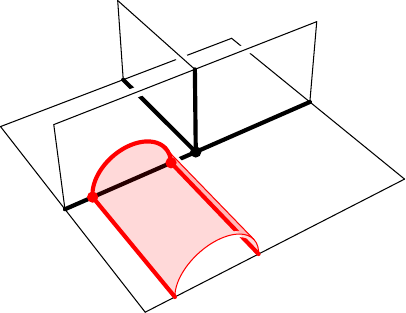}
\label{Fig:MoveSnakeletHeadPastVertex0}
}
\subfloat[]{
\includegraphics[width = 0.31\textwidth]{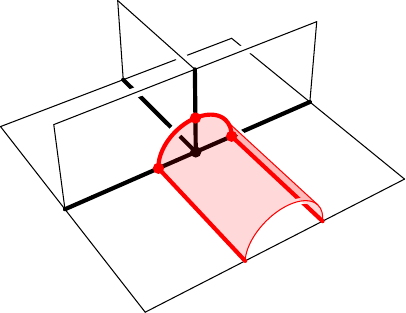}
\label{Fig:MoveSnakeletHeadPastVertex1}
}
\subfloat[]{
\includegraphics[width = 0.31\textwidth]{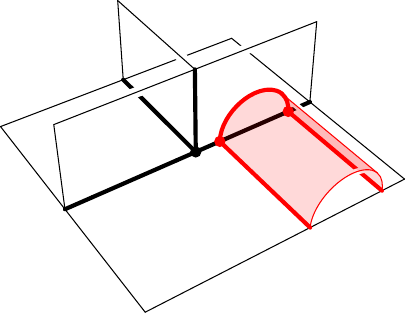}
\label{Fig:MoveSnakeletHeadPastVertex2}
}
\caption{Moving the head of a snakelet past a vertex.}
\label{Fig:MoveSnakeletHeadPastVertex}
\end{figure}

We now discuss how to move the head of a snakelet across a vertex.

\begin{lemma}
\label{Lem:SnakeletVertex}
Suppose that $\delta$ and $\delta'$ are arcs in $f$ that have the same start point but differ by an isotopy moving the terminal point from one edge of $f$ to the next.
Suppose that $\calF[\delta]$ and $\calF[\delta']$ are $L$--essential.
Then we can connect $\calF[\delta]$ to $\calF[\delta']$ by a 2-3 move followed by a 3-2 move without introducing an $L$--inessential face.
\end{lemma}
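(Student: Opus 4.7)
The strategy is to exhibit a single 2-3 move followed by a single 3-2 move taking $\calF[\delta]$ to $\calF[\delta']$, and then to verify that the intermediate foam is $L$-essential.

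Let $v$ be the vertex of $f$ that separates the edges of $f$ containing the terminal endpoints of $\delta$ and $\delta'$. The required 2-3 move is applied along a specific edge of $\calF[\delta]$ running between $v$ and a vertex of the head of the snakelet produced by the 0-2 move along $\delta$, as indicated by the passage from \reffig{MoveSnakeletHeadPastVertex0} to \reffig{MoveSnakeletHeadPastVertex1}. Since $v$ and the snakelet vertex are distinct, this edge is not an edge loop, so the 2-3 move is legal. The subsequent 3-2 move is applied to the triangular face produced on the opposite side of $v$, yielding $\calF[\delta']$ as in \reffig{MoveSnakeletHeadPastVertex2}.

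It remains to check $L$-essentiality of the intermediate foam. Every face is either a descendant of a face of $\calF[\delta]$, and hence (as in the proof of \reflem{VMoveL}) is incident to the same pair of complementary regions, or is one of three new triangular faces created by the 2-3 move. For each of these three new faces, I would argue that the pair of regions it separates coincides with the pair separated by some face of either $\calF[\delta]$ or $\calF[\delta']$. Since both of these foams are $L$-essential by hypothesis, each of the new faces is then $L$-essential.

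I expect the main obstacle to be the combinatorial bookkeeping identifying, for each of the three new faces, the corresponding face of $\calF[\delta]$ or $\calF[\delta']$ with matching incident regions; this is a routine local inspection of \reffig{MoveSnakeletHeadPastVertex}, parallel to the region-tracking argument already used in the proof of \reflem{ImplementVMove}.
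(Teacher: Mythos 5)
Your proposal is correct and follows essentially the same route as the paper: the 2-3 move is performed along the edge joining the vertex of $f$ to the snakelet head (legal because its endpoints are distinct, hence it is not cyclic), the 3-2 move then yields $\calF[\delta']$, and $L$-essentiality of the intermediate foam is checked by noting that any newly created face only separates regions already in contact in $\calF[\delta]$ or $\calF[\delta']$, which are $L$-essential by hypothesis. The paper likewise defers this region-tracking to an inspection of \reffig{MoveSnakeletHeadPastVertex}, so the bookkeeping you flag as the remaining obstacle is exactly what the published proof handles by reference to that figure.
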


\begin{proof}
See \reffig{MoveSnakeletHeadPastVertex}.
Since the arcs move by isotopy, their terminal points do not cross their initial points.
Note that the only new regions that come into contact are also in contact in $\calF[\delta']$.
Therefore all faces are $L$--essential throughout.
Also note that the edge along which we apply the 2-3 move cannot be cyclic since its endpoints are distinct: 
one endpoint is a vertex of $f$ and the other is part of the snakelet.
\end{proof}

\subsection{Moving snakelets to realise some 0-2 moves} 
\label{Sec:MovingSnakelets}

Throughout this subsection we make the following assumptions.

\begin{hypotheses}
\label{Hyp:MovingSnakelets}
Suppose that $M$ is a compact, connected three-manifold with boundary.
Suppose that $L$ is a labelling of $\Delta_M$. 
Suppose that $\calF$ is an $L$--essential foam in $M$.
Suppose that $f$ is a model face of $\cover{\calF}$.
Suppose that $\delta$ is an arc properly embedded in $f$ and disjoint from the model vertices of $f$.
Suppose that $\calF$ and $\calF[\phi_M(\delta)]$ are $L$--essential.
\end{hypotheses}

To simplify our notation, for the remainder of the paper we will conflate $\delta$ with $\phi_M(\delta)$ and $f$ with $\phi_M(f)$.

\begin{definition}
\label{Def:Sides}
The two components of $f - \delta$ are the \emph{sides} of $\delta$.
\end{definition}

To aid our exposition, we choose a co-orientation on $f$ and an orientation for $\delta$.
This allows us to realise the 0-2 move along $\phidelta$ as generating a snakelet $S$, as in \refdef{Snakelet}.

\begin{definition}
\label{Def:FaceFromSide}
Suppose that $s$ is a side of $\delta$.
Recall that $r$ is the belly of the snakelet: a small regular neighbourhood, in $f$, of $\delta$.
Choose a point $x$ in $s - r$.
We denote by $f_s$ the descendant (under the 0-2 move) of $f$ in $\calF[\delta]$ containing $x$.
\end{definition}

\begin{remark}
\label{Rem:FaceFromSide}
If $f$ has no self-gluings then $f_s$ is a subset of $f$.
However, when a model edge of $s \cap \bdy f$ other than the first or last meets $\delta$ then $f_s$ is more complicated.
To see this, consider \reffig{SnakeletBefore}.
In that figure, the dotted line indicating $\delta$ is on the face $f$. 
However, suppose that one of the other two disks at the top of the figure lies in (a translate of) $s$.
In this case $f_s$ extends along either the belly or the back of the snakelet to meet the head of the snakelet.
If instead one of the two disks at the bottom of the figure lies in (a translate of) $s$ then $f_s$ meets the head of the snakelet directly.
\end{remark}

Suppose that $s$ is one of the sides of $\delta$.
We orient $s \cap \bdy f$ from the initial point to the terminal point of $\delta$.
We name the model edges of $f$, that meet $s \cap \bdy f$ as $e_0, \ldots e_N$, with index increasing in the direction of the orientation.
Note that $s$ meets at least two model edges;
if it did not then the foam $\calF[\delta]$ would have an $L$--inessential face.
We also name the model vertex of $f$ where $e_i$ meets $e_{i+1}$ as $v_i$.

\begin{definition}
Suppose that $e$ is a model edge of $f$.
Let $x$ be a point in the interior of $e$;
let $\eta(x)$ be a small regular neighbourhood of $x$, taken in $\cover{M}$.
Note that $\eta(x)$ is a three-ball.
Let $y$ be a point in the intersection of $\eta(x)$ with an open collar of the model edge $e$ taken in $f$.
Let $\eta'$ be the component of $\eta(x) - \cover{\calF}$ whose closure does not contain $y$.
The \emph{outer region} for $e$ is the component $E$ of $\cover{M} - \cover{\calF}$ that contains $\eta'$.
We also say that $E$ is an \emph{outer region for} $f$.
\end{definition}

Let $E_i$ be the outer region for $e_i$.
Recall that $\phi \from \cover{M} \to M$ is the universal covering map.

\begin{lemma}
\label{Lem:Do0-2ThreeSides}
Assuming \refhyp{MovingSnakelets}, suppose that $s$ is a side of the arc $\delta$.
Suppose that $s$  meets only three model edges, $e_0$, $e_1$, and $e_2$.
Let $E$ = $E_1$.
Suppose that $L(E)$ appears precisely once as a label of an outer region for an edge of $s$, and precisely once as a label for an outer region for an edge of $f_s$.
Suppose that $e_1$ is not cyclic.
Suppose that $\phi_M(e_1)$ is distinct from $\phi_M(e_0)$ and $\phi_M(e_2)$.
Then there is a path $\calF = \calF_0, \ldots, \calF_n = \calF[\delta]$ of $L$--essential foams of $M$ where each foam is related to the next by a 2-3 or 3-2 move.
\end{lemma}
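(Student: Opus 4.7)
The plan is to factor the 0-2 move along $\delta$ into two stages. In the first stage, I would apply \reflem{ImplementVMove} to perform a V-move at the vertex $v_0 = e_0 \cap e_1$, using $e_1$ as the $L$--flippable edge. This produces an intermediate $L$--essential foam $\calF[\delta_0]$, where $\delta_0$ is an arc in $f$ from $e_0$ to $e_1$ cutting off only $v_0$. In the second stage, I would invoke \reflem{SnakeletVertex} to shift the head of the resulting snakelet from $e_1$ to $e_2$ across $v_1 = e_1 \cap e_2$, producing $\calF[\delta]$.

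The key step is verifying that $e_1$ is $L$--flippable. The hypotheses that $e_1$ is not cyclic and that $\phi_M(e_1)$ is distinct from $\phi_M(e_0)$ and $\phi_M(e_2)$ ensure that the 2-3 move on $e_1$ is combinatorially well-defined in $M$. A dual analysis, passing between foam vertices and tetrahedra, identifies the single new foam face created by this 2-3 move as separating the outer regions $E_0$ and $E_2$: namely, the fourth regions at $v_0$ and $v_1$ not adjacent to $e_1$. The remaining three regions around $e_1$ (the two sides of $f$ and the outer region $E$) have pairwise distinct labels by $L$--essentiality of $\calF$. Thus $L$--flippability of $e_1$ reduces to establishing $L(E_0) \neq L(E_2)$.

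This is where the two ``precisely once'' hypotheses enter. The first immediately gives $L(E_0), L(E_2) \neq L(E)$. The second, applied to the outer regions of the edges of $f_s$ in $\calF[\delta]$ (which are $E_0, E, E_2$, and the outer region $O_\eta$ of the new edge $\eta$ on $\bdy f_s$ produced by the 0-2 move), combined with the $L$--essentiality of $\calF[\delta]$ at the new foam vertices, rules out the coincidence $L(E_0) = L(E_2)$. Once $L$--flippability of $e_1$ is confirmed, \reflem{ImplementVMove} delivers the first stage as a sequence of $L$--essential 2-3 and 3-2 moves ending at $\calF[\delta_0]$. Then \reflem{SnakeletVertex}, applied to the arcs $\delta_0$ and $\delta$ (which share a start point on $e_0$ and whose terminal points differ by crossing $v_1$), completes the path to $\calF[\delta]$ with one more 2-3 move followed by a 3-2 move.

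The main obstacle is the label analysis that extracts $L(E_0) \neq L(E_2)$ from the ``precisely once'' hypotheses. It requires pinning down $O_\eta$ from the local combinatorics of the 0-2 move near $\bdy f_s$, and then using the $L$--essentiality of $\calF[\delta]$ to derive a contradiction in the case $L(E_0) = L(E_2)$. Everything else is a straightforward concatenation of the two cited lemmas.
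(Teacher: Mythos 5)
Your first stage and the label analysis are essentially right (indeed $L(E_0) \neq L(E_2)$ follows directly from $L$--essentiality of $\calF[\delta]$, since the new bigon face created by the 0-2 move separates exactly the outer regions $E_0$ and $E_2$; this, with $e_1$ not cyclic, makes $e_1$ $L$--flippable, and \reflem{ImplementVMove} plus \reflem{SnakeletVertex} then carry out your two stages). But there is a genuine gap: this construction only produces $\calF[\delta]$ when the resulting snakelet ends land in the correct position, and the hypotheses of the lemma allow $\phi_M(e_0) = \phi_M(e_2)$. In that case, downstairs in $M$ both the head and the tail of the snakelet lie on the single edge $\phi_M(e_0)=\phi_M(e_2)$, and after the V-move at $v_0$ and the slide across $v_1$ they may sit in the wrong order along that edge. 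The foam you reach is then $\calF[\delta'']$ for an arc $\delta''$ that is \emph{not} isotopic to $\phi_M(\delta)$ in the face rel the foam's vertices (you cannot isotope the head past the tail), so you have not reached $\calF[\delta]$. This is precisely the situation Matveev must also handle (swapping the ends of a snakelet), and it is where the bulk of the paper's proof lives: one builds auxiliary blue and green snakelets, slides the green one through the red one to exchange the head and tail, and then removes the helpers, with a case division according to whether $L(E) = L(\gamma(E))$ for the deck transformation $\gamma$ taking $e_0$ to $e_2$.

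Relatedly, you have misidentified the role of the ``precisely once'' hypotheses. You use them only to get $L(E_0) \neq L(E_2)$, which, as above, is already forced by $\calF[\delta]$ being $L$--essential. Their real purpose is to keep all intermediate foams $L$--essential during the swap: in particular, the hypothesis that $L(E)$ appears precisely once as a label of an outer region for $f_s$ is what guarantees $L(E) \neq L(\gamma(B))$ when the green snakelet is slid back through the red one in the case $L(E) \neq L(\gamma(E))$. So the missing swap is not a routine appendix to your argument; it is the step that actually consumes the hypotheses of the lemma, and without it your proof only establishes the statement under the extra assumption $\phi_M(e_0) \neq \phi_M(e_2)$ (or when the ends happen to come out in the right order).
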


\begin{figure}[htbp]
\subfloat[]{
\labellist
\hair 2pt \small
\pinlabel $E$ at 65 190
\pinlabel $A$ at 110 15
\pinlabel $B$ at 225 220
\pinlabel $e_0$ [tr] at 148 58
\pinlabel $v_0$ [l] at 80 85
\pinlabel $e_1$ [br] at 109 119
\pinlabel $v_1$ [t] at 151 151
\pinlabel $e_2$ [bl] at 209 137
\pinlabel $f$ at 180 100
\endlabellist
\includegraphics[width = 0.48\textwidth]{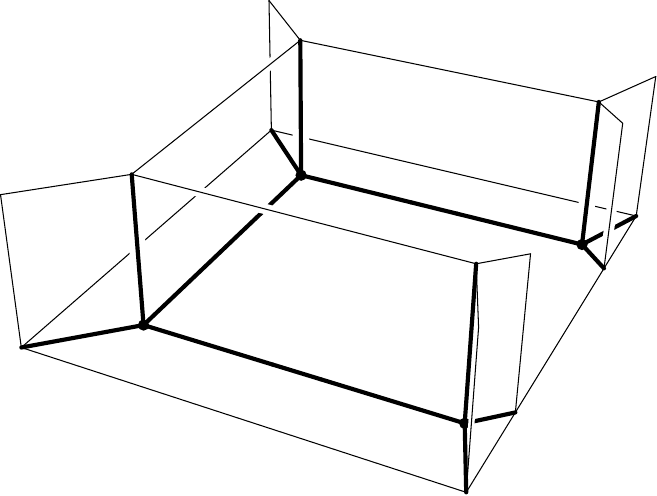}
\label{Fig:ThreeEdges0}
}

\subfloat[]{
\includegraphics[width = 0.41\textwidth]{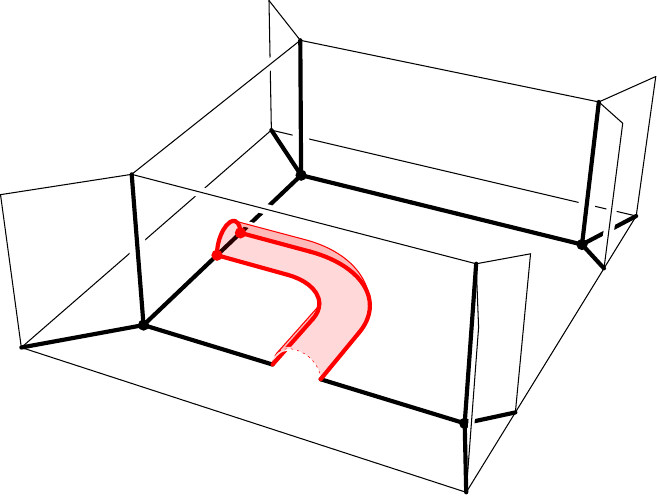}
\label{Fig:ThreeEdges1}
}
\subfloat[]{
\includegraphics[width = 0.41\textwidth]{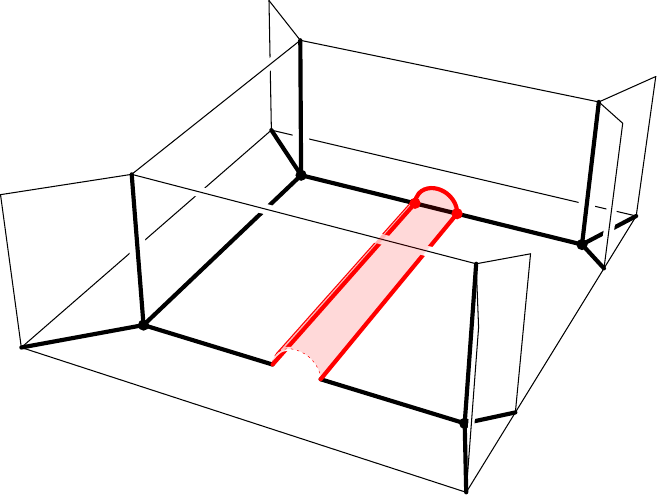}
\label{Fig:ThreeEdges2}
}
\caption{}
\label{Fig:ThreeEdgesEasy}
\end{figure}

\begin{figure}[htbp]
\subfloat[]{
\labellist
\hair 2pt \tiny
\pinlabel $e'_2$ [b] at 119 91
\pinlabel $e'_1$ [r] at 77 96
\pinlabel $v_1$ [t] at 95 97
\endlabellist
\includegraphics[width = 0.31\textwidth]{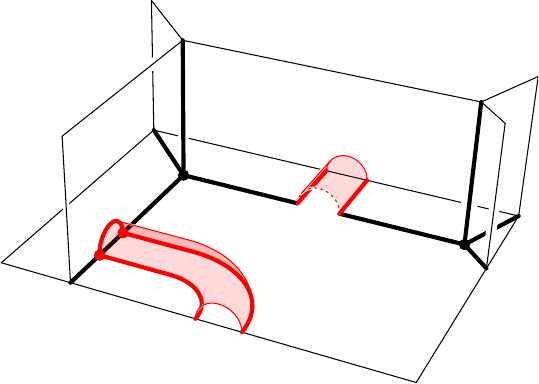}
\label{Fig:ThreeEdges1b}
}
\subfloat[]{
\includegraphics[width = 0.31\textwidth]{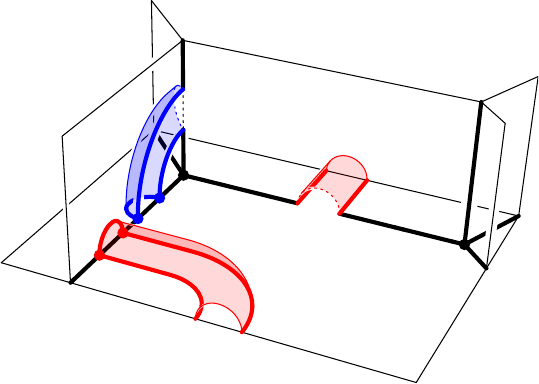}
\label{Fig:ThreeEdges2b}
}
\subfloat[]{
\includegraphics[width = 0.31\textwidth]{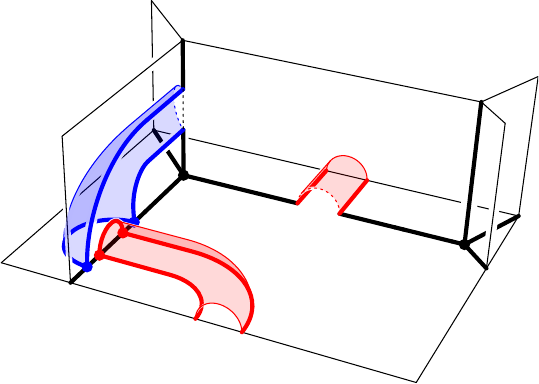}
\label{Fig:ThreeEdges3b}
}

\subfloat[]{
\includegraphics[width = 0.31\textwidth]{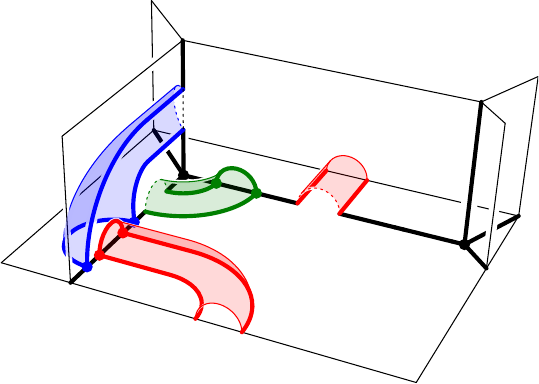}
\label{Fig:ThreeEdges4b}
}
\subfloat[]{
\includegraphics[width = 0.31\textwidth]{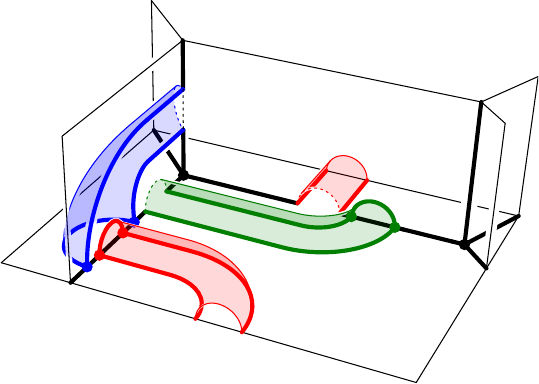}
\label{Fig:ThreeEdges5b}
}
\subfloat[]{
\includegraphics[width = 0.31\textwidth]{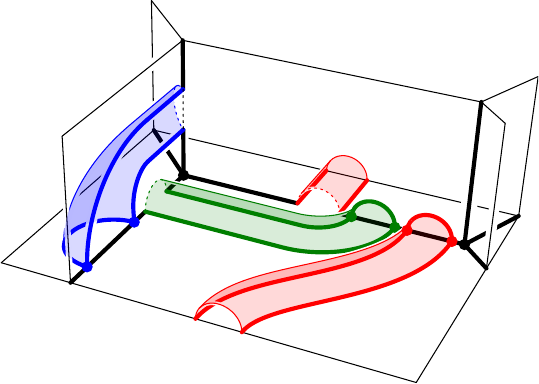}
\label{Fig:ThreeEdges6b}
}

\subfloat[]{
\includegraphics[width = 0.31\textwidth]{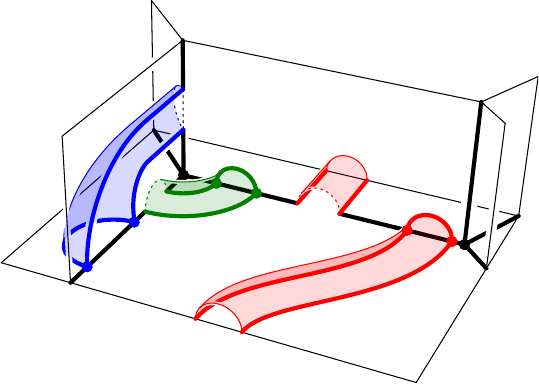}
\label{Fig:ThreeEdges7b}
}
\subfloat[]{
\includegraphics[width = 0.31\textwidth]{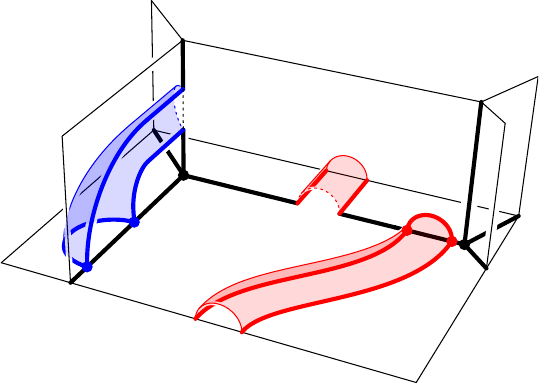}
\label{Fig:ThreeEdges8b}
}
\subfloat[]{
\includegraphics[width = 0.31\textwidth]{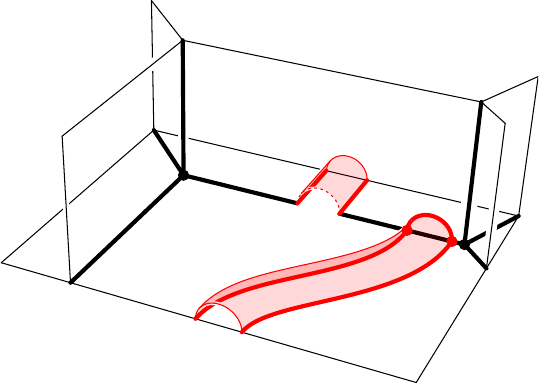}
\label{Fig:ThreeEdges9b}
}
\caption{}
\label{Fig:ThreeEdgesSwap}
\end{figure}

\begin{proof}
Let $A = E_0$ and $B = E_2$.
See \reffig{ThreeEdges0}.
Let $N$ and $S$ be the regions of $\cover{M} - \cover{\calF}$ incident to the interior of $f$.
The labels on $A$, $B$, $E$, $N$, and $S$ are all distinct because $\calF[\delta]$ is $L$--essential.
Because $L(A) \neq L(B)$ and $e_1$ is not cyclic, we have that $e_1$ is $L$--flippable.
By \reflem{ImplementVMove} we may perform a V-move at the vertex $v_0$ where $e_0$ meets $e_1$. 
See \reffig{ThreeEdges1}.
This produces the \emph{red snakelet}.
Its head is on $e_1$ and its tail is on $e_0$.
(Here we think of $e_1$ and $e_0$ as subsets of $M$, since after the V-move, they are no longer edges of the foam.)
Since $\phi_M(e_1)$ is distinct from $\phi_M(e_0)$, we may isotope the head of the red snakelet to the other end of $e_1$.
Applying \reflem{SnakeletVertex}, we move the head of the red snakelet onto $e_2$.
At this stage we are done unless we are in the case that $\phi_M(e_0) = \phi_M(e_2)$, and the head and the tail of the red snakelet are in the wrong order along the edge $e_2$.
(Matveev swaps the ends of a snakelet in~\cite[Figure~1.19]{Matveev07}; maintaining $L$--essentiality requires additional work as follows.)

So, suppose that $\phi_M(e_0) = \phi_M(e_2)$.
\reffig{ThreeEdges1b} shows the foam $\calF'$ obtained after performing the V-move in this case. 
(In \reffig{ThreeEdgesSwap} we no longer draw the edge $e_0$ so that we may continue to illustrate all cases without additional figures.
We would see different configurations of snakelets along $e_0$ depending on how the orientations of $e_0$ and $e_2$ relate to the orientation of $\phi_M(e_0) = \phi_M(e_2)$.)
The stabiliser of $e_0$ in $\pi_1(M)$ acting on $\cover{M}$ is trivial.
Thus there is a unique $\gamma \in \pi_1(M)$ taking $e_0$ to $e_2$.

Let $e_1'$ and $e_2'$ be the edges of $\cover{\calF'}$ leaving $v_1$ in the direction of $e_1$ and $e_2$ (in the original foam $\cover{\calF}$).
See \reffig{ThreeEdges1b}.
Note that neither $e_1'$ nor $e_2'$ is cyclic because the vertex at one end, $v_1$, is part of the original foam $\cover\calF$, while the vertex at the other end is one of the two vertices at the head of the red snakelet.
The regions $E$ and $\gamma(E)$ meet the endpoints of $e'_2$ but not the interior of $e'_2$.
There are two cases depending on whether or not $L(E) = L(\gamma(E))$.

\begin{case*}[$L(E) = L(\gamma(E))$]
In this case we perform a V-move at $v_1$ as shown in \reffig{ThreeEdges2b}.
This is possible by \reflem{ImplementVMove} using the fact that $e'_1$ is not cyclic and that the regions at its endpoints, $A$ and $B$, have distinct labels.
This builds the \emph{blue snakelet}.
We then apply \reflem{SnakeletVertex} to move the head of the red snakelet past one vertex of the blue snakelet.
See \reffig{ThreeEdges3b}.
The hypotheses of \reflem{SnakeletVertex} are satisfied because $L(A) \neq L(B)$.

Now the regions at the ends of $e'_2$ have labels $L(E)$ and $L(\gamma(B))$.
(The latter is because $B$ is at the end of $\delta$, and therefore $\gamma(B)$ is at the end of $\gamma(\delta)$.)
Since $\calF$ is $L$--essential, we have that $L(E)$ is not equal to $L(B)$.
By equivariance, $L(\gamma(E))$ is not equal to $L(\gamma(B))$.
By assumption, $L(E) = L(\gamma(E))$.
Thus the labels $L(E)$ and $L(\gamma(B))$ (of the regions at the ends of $e'_2$) are distinct.  
Since $e_2'$ remains non-cyclic it is $L$--flippable.
We apply \reflem{ImplementVMove} to perform a V-move at $v_1$ as shown in \reffig{ThreeEdges4b}.
This builds the \emph{green snakelet}.

Next, we slide the head of the green snakelet along the edge $e'_2$, into the red snakelet, over the two vertices of the red snakelet (using \reflem{SnakeletVertex}), and out of the red snakelet to reach \reffig{ThreeEdges5b}.
Again we are able to do this without creating any $L$--inessential faces because  $L(E) \neq L(\gamma(B))$.

Next, we slide the head of the red snakelet off of the belly of the blue snakelet and along the back of the green snakelet to reach \reffig{ThreeEdges6b}.
In this process, the head of the red snakelet meets regions $B$, $E$, and then $B$ again.
The labels on these are distinct from the label on $A$.
We then slide the green snakelet back through the inside of the red snakelet and back to its original position, as shown in \reffig{ThreeEdges7b}.
In this process, the head of the green snakelet meets regions $B$, $\gamma(B)$, and then $B$ again.
The labels on these are distinct from the label on $E$.

Finally we perform reverse V-moves (\reflem{ImplementVMove}) to remove the green snakelet (as shown in \reffig{ThreeEdges8b}), and the blue snakelet, to reach \reffig{ThreeEdges9b}.
To remove the green snakelet we again use the fact that $L(E) \neq L(\gamma(B))$.
To remove the blue snakelet we use the fact that $L(A) \neq L(B)$.
\end{case*}

\begin{case*}[$L(E) \neq L(\gamma(E))$]
In this case we do not add the blue snakelet and we omit all steps mentioning it.
The various applications of \reflem{ImplementVMove} and \reflem{SnakeletVertex} are justified similarly. 
The one subtle step is when we slide the green snakelet back through the inside of the red snakelet and back to its original position, as shown in \reffig{ThreeEdges7b} (again the blue snakelet is not present).
Here we require that $L(E)$ is distinct from $L(\gamma(B))$.
This holds by the hypothesis that $L(E)$ appears precisely once as the label of an outer region for $f_s$. \qedhere
\end{case*}
\end{proof}

\begin{lemma}
\label{Lem:Do0-2ManyEdges}
Assuming \refhyp{MovingSnakelets}, 
suppose that the side $s$ meets $N \geq 3$ model edges.
Suppose there is an index $k$ with the following properties.
Set $E = E_k$.
Suppose that $L(E)$ appears precisely once as a label of an outer region for an edge of $s$, and precisely once as a label of an outer region for an edge of $f_s$.
Suppose that $e_{k-1}$, $e_{k}$, and $e_{k+1}$ are not cyclic.
Suppose that the image $\phi_M(e_k)$ under the covering map is distinct from $\phi_M(e_i)$ for $i \neq k$.
Then there is a path $\calF = \calF_0, \ldots, \calF_n = \calF[\delta]$ of $L$--essential foams of $M$ where each foam is related to the next by a 2-3 or 3-2 move.
\end{lemma}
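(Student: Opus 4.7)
The plan is to reduce to \reflem{Do0-2ThreeSides} by first performing a 0-2 move along a short sub-arc $\delta'$ near the edge $e_k$, and then applying \reflem{SnakeletVertex} repeatedly to slide the endpoints of the resulting snakelet along $\bdy f$ until they reach the endpoints of $\delta$.

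First I would choose $\delta'$ properly embedded in $f$, with the same co-orientation as $\delta$, with endpoints in the interiors of $e_{k-1}$ and $e_{k+1}$, and arranged so that the side $s'$ of $\delta'$ corresponding to $s$ meets precisely the three edges $e_{k-1}, e_k, e_{k+1}$. I would then verify that the hypotheses of \reflem{Do0-2ThreeSides} hold for $\delta'$, with $e_k$ playing the role of the middle edge: the three-edge condition holds by construction; the non-cyclicity of $e_{k-1}, e_k, e_{k+1}$ and the distinctness of $\phi_M(e_k)$ from $\phi_M(e_{k-1})$ and $\phi_M(e_{k+1})$ are directly inherited from our hypotheses; the outer regions of $s'$ and of $f_{s'}$ inject into those of $s$ and of $f_s$, so the ``precisely once'' conditions on $L(E_k)$ transfer; and $L$-essentiality of $\calF[\delta']$ follows from that of $\calF[\delta]$ by a local check on the belly, back, head, and tail of the new snakelet. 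Applying \reflem{Do0-2ThreeSides} then produces a sequence of $L$-essential foams from $\calF$ to $\calF[\delta']$ via 2-3 and 3-2 moves.

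Next I would slide the snakelet so that $\delta'$ extends to $\delta$. Starting from $\calF[\delta']$, I would push the head of the snakelet from $e_{k+1}$ across the vertices $v_{k+1}, v_{k+2}, \ldots$ of $\bdy f$ until it lies at the terminal endpoint of $\delta$, and then push the tail from $e_{k-1}$ across $v_{k-2}, v_{k-3}, \ldots$ until it lies at the initial endpoint of $\delta$. Each vertex crossing is a single application of \reflem{SnakeletVertex}, contributing a 2-3 move followed by a 3-2 move. After all crossings the snakelet coincides with the one produced by a direct 0-2 move along $\delta$, so the resulting foam is $\calF[\delta]$.

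The main obstacle is confirming $L$-essentiality of each intermediate foam $\calF[\delta'']$ arising during the slides, as required by \reflem{SnakeletVertex}. The snakelet region is neither destroyed nor recreated during the slides, so it has a fixed label $\lambda$ throughout; tracking the V-move construction embedded in the proof of \reflem{Do0-2ThreeSides} identifies $\lambda$ with $L(E_k)$, since the snakelet is born as the bubble adjacent to $e_k$. Therefore $L$-essentiality of each $\calF[\delta'']$ reduces to checking that $L(E_k)$ differs from $L(N)$, $L(S)$, and the labels of the outer regions momentarily adjacent to the head and tail bigons of the snakelet. The first two conditions hold because $\calF[\delta]$ is $L$-essential. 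The remaining conditions hold by our hypothesis that $L(E_k)$ appears precisely once as an outer region label for $s$ and for $f_s$, the $f_s$ clause handling any self-gluings of $f$ that bring outer regions outside $s$ into adjacency with the snakelet. Consequently \reflem{SnakeletVertex} applies at every slide, and concatenating the moves gives the required path.
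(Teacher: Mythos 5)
Your plan founders at the first step, and the root cause is a misidentification of which regions the new bigon of a 0-2 move separates. For a 0-2 move along an arc in $f$, the bubble enclosed by the resulting snakelet is a finger of the \emph{outer region at the edge containing the tail of the arc}, and the new bigon face separates that region from the outer region at the head. So the 0-2 move along your short arc $\delta'$ (endpoints on $e_{k-1}$ and $e_{k+1}$) creates a face between lifts of $E_{k-1}$ and $E_{k+1}$; its $L$--essentiality requires $L(E_{k-1}) \neq L(E_{k+1})$, which is \emph{not} among the hypotheses of \reflem{Do0-2ManyEdges} (only the label $L(E_k)$ is controlled, via the ``precisely once'' conditions). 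The locally frozen configurations of \refsec{LocallyFrozen}, with outer labels alternating $a,b,a,b,\dots$ around $f$, show this can genuinely fail, and then $\calF[\delta']$ is not $L$--essential, so \reflem{Do0-2ThreeSides} (whose hypotheses include \refhyp{MovingSnakelets} for $\delta'$) cannot be invoked. The same error propagates through your sliding stage: your claim that the snakelet's interior carries the label $L(E_k)$ ``since it is born as the bubble adjacent to $e_k$'' is false — the foam $\calF[\delta'']$ is determined by the arc $\delta''$ alone, and its bigon separates the tail-edge outer region ($E_{k\pm 1}$) from whichever $E_j$ the head currently abuts. Hence each application of \reflem{SnakeletVertex} needs $L(E_{k\pm1}) \neq L(E_j)$ for all intermediate $j$, none of which is hypothesised. (A further, secondary issue: when $\phi_M$ identifies $e_j$ with the tail edge for some $j$, the sliding head can collide with its own tail in $M$; this is exactly the ``swap the ends'' difficulty that \reflem{Do0-2ThreeSides} handles with extra work, and your proposal does not address it.)

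The paper's proof is arranged precisely to avoid creating any face whose essentiality is not governed by the uniqueness of $L(E_k)$: it grows \emph{helper} snakelets that are fingers of the region $E = E_k$ itself, started by V-moves at the ends of $e_k$ (possible since $e_{k\pm1}$ are $L$--flippable by the ``precisely once'' hypothesis) and slid outwards along $s$, so that every new or slid face separates $E$ from some other outer region of $s$. This reduces the side $s$ of the \emph{original} arc $\delta$ to three edges with $E$ adjacent to the middle one; \reflem{Do0-2ThreeSides} is then applied to $\delta$ itself, and afterwards the helpers are retracted, their heads now travelling around $f_s$, where the second ``precisely once'' hypothesis (for $f_s$) is what guarantees essentiality. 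If you want to salvage your outline, you would have to replace your snakelet — a finger of $E_{k\pm1}$ — by fingers of $E_k$, which is essentially the paper's construction.
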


\begin{figure}[htbp]
\subfloat[]{
\labellist
\hair 2pt \small
\pinlabel $A$ [r] at 35 153
\pinlabel $E$ [b] at 210 270
\pinlabel $B$ [l] at 445 90
\endlabellist
\includegraphics[width = 0.45\textwidth]{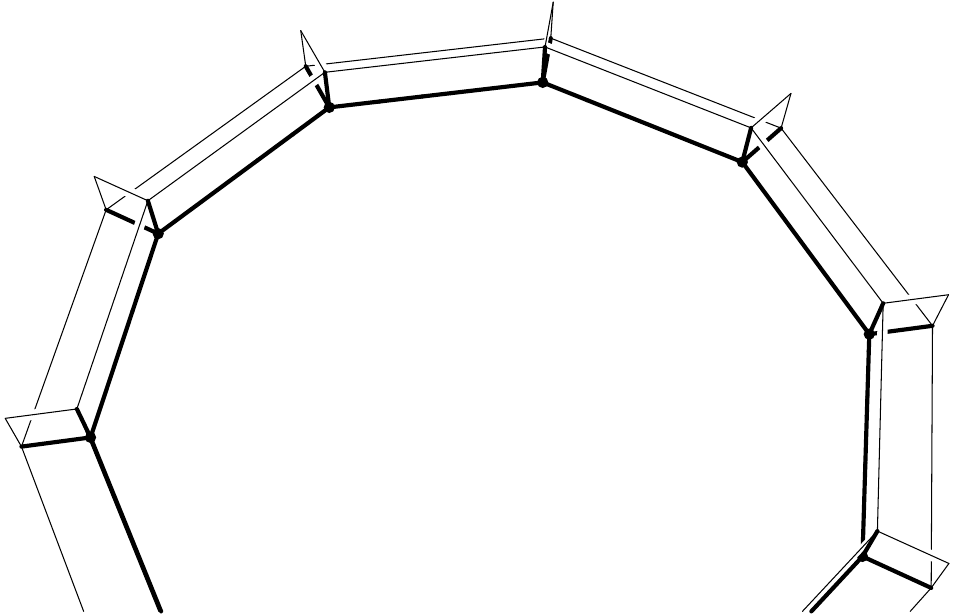}
\label{Fig:ManyEdges0}
}
\quad
\subfloat[]{
\includegraphics[width = 0.45\textwidth]{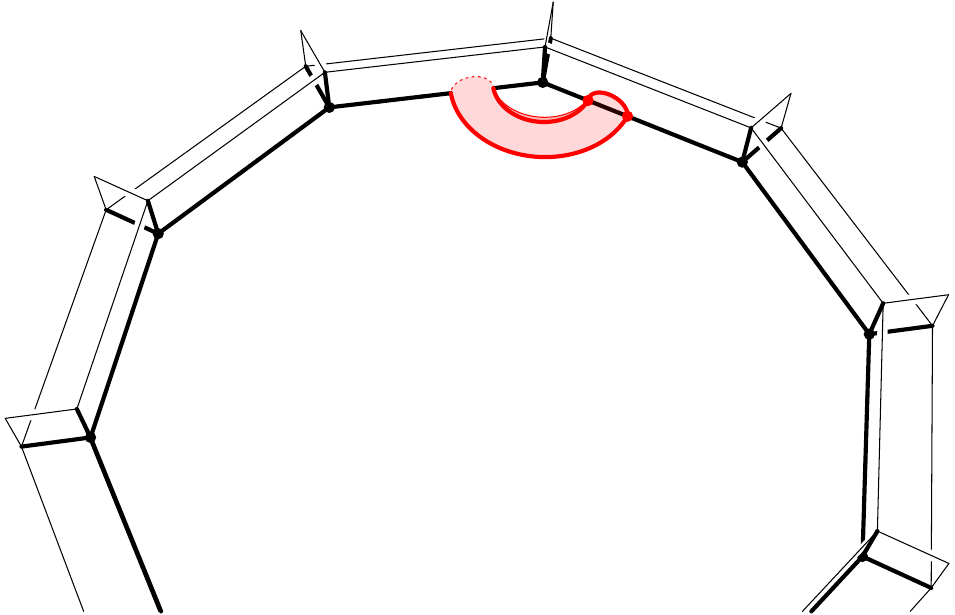}
\label{Fig:ManyEdges1}
}

\subfloat[]{
\includegraphics[width = 0.45\textwidth]{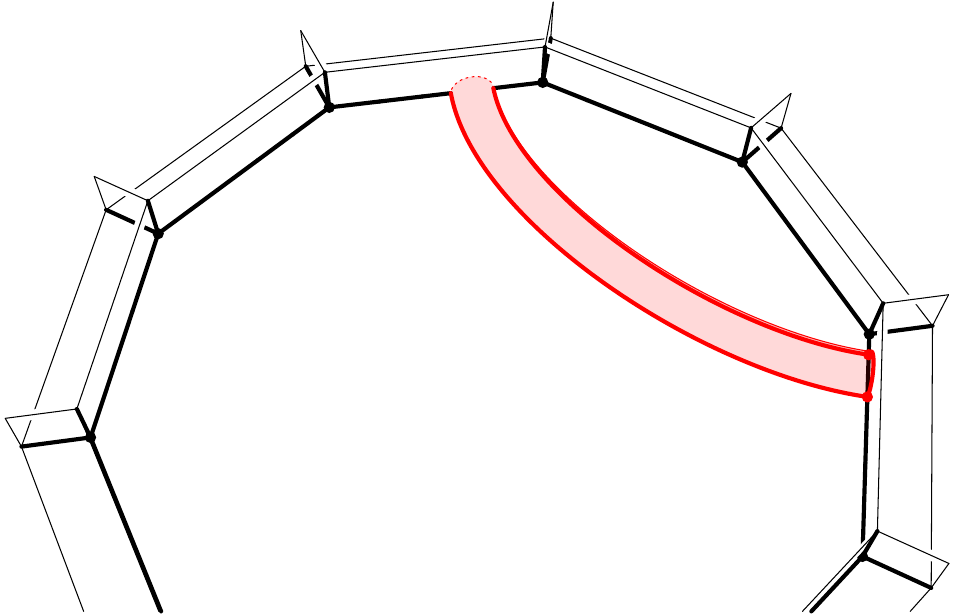}
\label{Fig:ManyEdges2}
}
\quad
\subfloat[]{
\includegraphics[width = 0.45\textwidth]{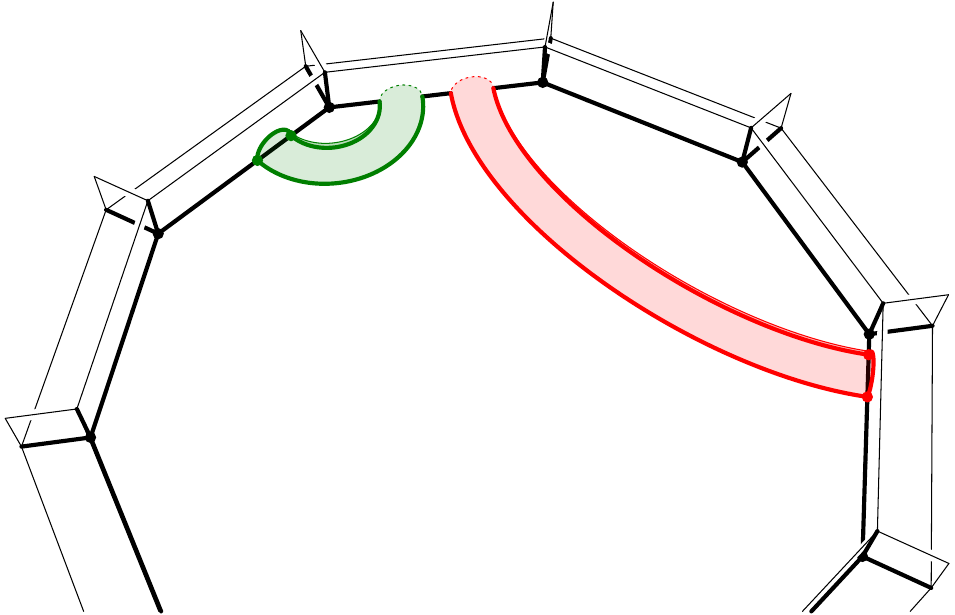}
\label{Fig:ManyEdges3}
}

\subfloat[]{
\includegraphics[width = 0.45\textwidth]{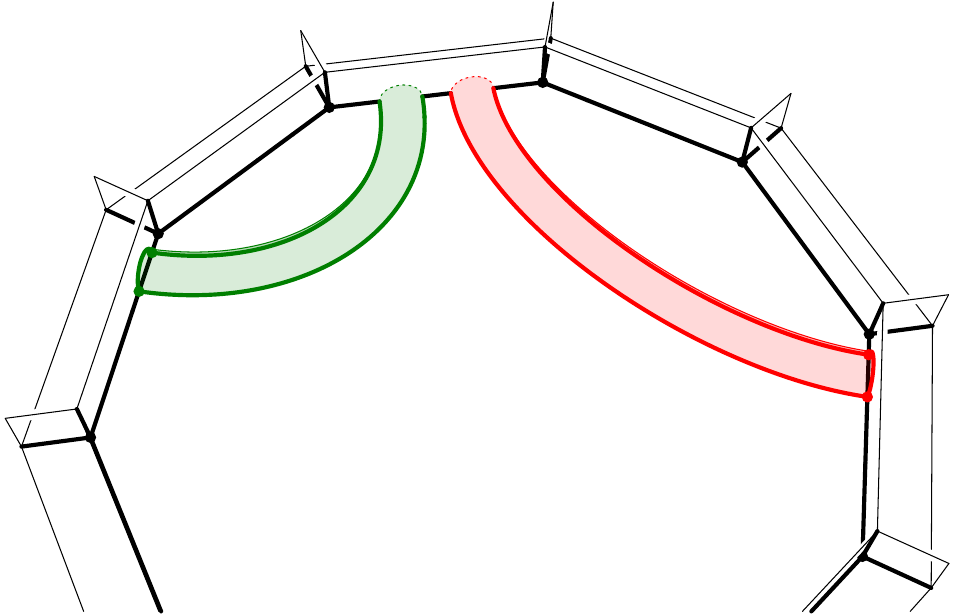}
\label{Fig:ManyEdges4}
}
\quad
\subfloat[]{
\includegraphics[width = 0.45\textwidth]{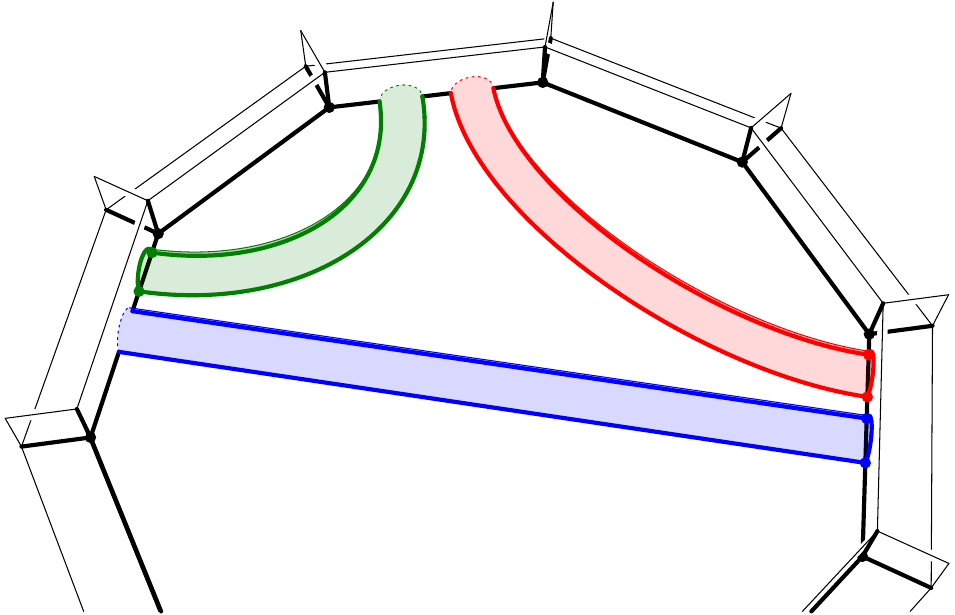}
\label{Fig:ManyEdges5}
}
\caption{Creating red and green snakelets in the proof of \reflem{Do0-2ManyEdges}.}
\label{Fig:ManyEdges}
\end{figure}

\begin{figure}[htbp] 
\subfloat[]{
\includegraphics[width = 0.35\textwidth]{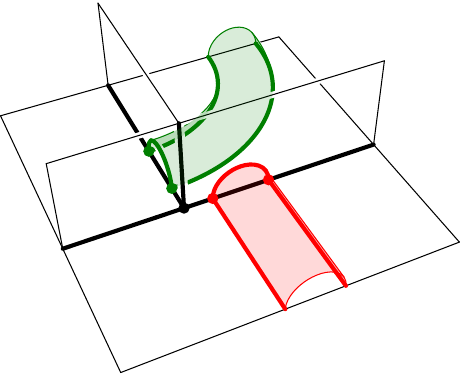}
\label{Fig:DodgeSnakeletHead0}
}
\quad
\subfloat[]{
\includegraphics[width = 0.35\textwidth]{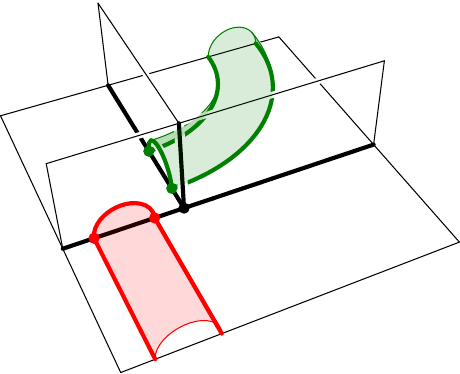}
\label{Fig:DodgeSnakeletHead1}
}

\subfloat[]{
\includegraphics[width = 0.35\textwidth]{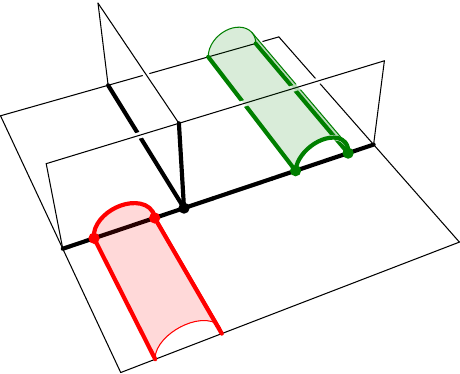}
\label{Fig:DodgeSnakeletHead2}
}
\quad
\subfloat[]{
\includegraphics[width = 0.35\textwidth]{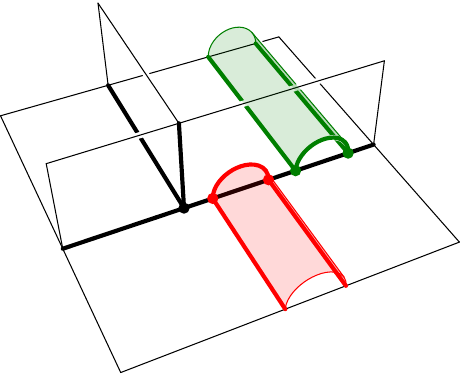}
\label{Fig:DodgeSnakeletHead3}
}
\caption{A regular neighbourhood of the vertex $v_{N-1}$. The red snakelet moves back one step to let the green snakelet past, then moves forward into place again.}
\label{Fig:DodgeSnakeletHead}
\end{figure}

\begin{proof}
Our goal is to provide a set of moves connecting the given foam $\calF$ to a foam with the hypotheses of  \reflem{Do0-2ThreeSides}, apply that lemma, and then provide a set of moves connecting the output of that lemma to the desired foam $\calF[\delta]$.

If $N = 3$ then we apply \reflem{Do0-2ThreeSides} and we are done.
Now suppose that $N > 3$.
Let $A = E_0$ and $B = E_N$.
See \reffig{ManyEdges0}.
Assume for now that $k$ is not equal to either $1$ or $N - 1$.
Using the hypotheses that $e_{k+1}$ is not cyclic, and that $L(E)$ appears once as the label of an outer region for $s$, by \reflem{ImplementVMove} we may apply a V-move, creating a \emph{red snakelet} as shown in \reffig{ManyEdges1}.
Repeatedly applying \reflem{SnakeletVertex} (and again using the hypothesis that $L(E)$ appears once as the label of an outer region for $s$), we move the head of the red snakelet just past $v_{N-1}$ and onto $e_N$.
See \reffig{ManyEdges2}.
Note that since $\phi_M(e_k)$ is distinct from $\phi_M(e_i)$ for $i \neq k$, when moving the head of the red snakelet we never need it to cross its own tail.

Next, using the hypothesis that $e_{k-1}$ is not cyclic, we apply a V-move to create a \emph{green snakelet}, as shown in \reffig{ManyEdges3}. 
We then move the head of the green snakelet just past $v_0$ and onto $e_0$, as shown in \reffig{ManyEdges4}.
These moves on the green snakelet are possible unless the head of the red snakelet blocks us.
If it does, we move the red snakelet back one step onto $e_{N-1}$, apply the desired move to the green snakelet, and then move the red snakelet forward to $e_N$ again.
This is illustrated in \reffig{DodgeSnakeletHead} for the case of moving the head of the green snakelet past a vertex.
We deal with the case of creating the green snakelet with a V-move (when the head of the red snakelet is in the way) in a similar fashion.

With the red and green snakelets in place, the side $s$ of $\delta$ has been reduced to three edges, with the middle edge being made up of a small segment of the original edge $e_k$, together with parts of the one-skeletons of the red and green snakelets.
See \reffig{ManyEdges4}.
This edge is not cyclic because its endpoints are distinct, being vertices on different snakelets.
One can check that the remaining hypotheses of \reflem{Do0-2ThreeSides} also hold, and thus we can do the 0-2 move, building the \emph{blue snakelet}.
The result is illustrated in \reffig{ManyEdges5} in the case that $\phi_M(e_0) \neq \phi_M(e_N)$.

With the blue snakelet in place, we must deconstruct the red and green snakelets. 
This is essentially the reverse of the process we used to create the red and green snakelets and move them into place.
The only difference is that our snakelet heads move around the face $f_s$ instead of $s$.
Note that it is possible for a 0-2 move to introduce a cyclic edge when $\phi_M(e_0) = \phi_M(e_N)$.
However, such a cyclic edge is based at one of the new vertices (on the blue snakelet) created by the 0-2 move, not any of the $v_i$.
In particular, the edges next to $E$ around $f_s$ are not cyclic, so \reflem{ImplementVMove} can be used in reverse to deconstruct the red and green snakelets.

The case that $k=1$ or $k=N-1$ is similar but simpler. 
We need build only one of the ``helper'' snakelets (red or green) in order to reach the hypotheses of \reflem{Do0-2ThreeSides}.
\end{proof}

\section{Distant labels}
\label{Sec:DistantLabels}

We now turn to the problem of implementing a 0-2 move in general. 

\subsection{Locally frozen configurations}
\label{Sec:LocallyFrozen}
As an example, suppose we want to perform a 0-2 move along an arc $\delta$ that connects opposite sides of a hexagonal face $f$. 
Suppose that the outer regions around $f$ alternate between two labels, $a$ and $b$ say. 
Then performing a 0-2 move along $\delta$ creates a new face between regions with labels $a$ and $b$, so does not create an $L$--inessential face.
However, none of the edges of $\bdy f$ are $L$--flippable.
Thus, there is no local 2-3 move that we can use in \reflem{ImplementVMove} to start building snakelets.
Depending on the combinatorics around $f$, there may be no 2-3 or 3-2 move that we can perform anywhere near $f$.
So we may need to start work very far away.
This is why  \refprop{Make0-2} includes the assumption that the image of $L$ is infinite.  

\subsection{Strategy}
\label{Sec:Strategy}

In the remainder of the paper we show how to use a (possibly distant) $L$--flippable edge to perform the 0-2 move by reducing to the case of \reflem{Do0-2ManyEdges}.
In order to do this, we will grow a complementary region $E$, having a ``distant'' label, through the foam to bring it into contact with the target face $f$.
We will use \emph{augmented 2-3 moves} to do this.
See \refsec{Augmented2-3}.
This done, we apply \reflem{Do0-2ManyEdges}.
Then we must undo all of the augmented 2-3 moves.
However, there is a potential obstacle to performing exactly the inverse moves in reverse order;
the 0-2 move along the arc $\phidelta$ alters the foam.
Said another way, our sequence of moves growing $E$ must commute with the 0-2 move along $\phidelta$.

To achieve this goal, we introduce two variants of a new \emph{nature reserve move}, described in \refsec{NatureReserve}. 
These ``protect'' the endpoints of the arc $\phidelta$ from being disturbed by augmented 2-3 moves.
Furthermore, they also protect the snakelet formed by performing a 0-2 move along $\phidelta$ from being disturbed by the same augmented 2-3 moves.

\subsection{Handle structures for foams}

We use the following notion of \emph{handle structures} to organise the argument.

Suppose that $M$ is a compact, connected three-manifold with boundary.
Suppose that $\calF$ is a foam in $M$.
For each vertex $v$ of $\calF$ we choose a regular neighbourhood $\eta(v) \subset M$ of $v$ which we call the \emph{zero-handle} for $v$.
We make the neighbourhoods small enough that the zero-handles are disjoint.
To agree with the notation below we also set $\overline{\eta}(v) = \eta(v)$.

For each edge $e$ of $\calF$ we choose a regular neighbourhood $\eta(e)$ of
\[
e - \bigcup_{v \in \bdy e} \overline{\eta}(v)
\qquad
\mbox{in} 
\qquad 
M - \bigcup_{v \in \bdy e} \overline{\eta}(v) 
\]
which we call the \emph{one-handle} for $e$.
We make the neighbourhoods small enough that the one-handles are disjoint.
We define
\[
\overline{\eta}(e) = \eta(e) \cup \bigcup_{v \in \bdy e} \overline{\eta}(v)
\]

For each face $f$ of $\calF$ we choose a regular neighbourhood $\eta(f)$ of
\[
f - \bigcup_{e \in \bdy{f}}\overline{\eta}(e)
\qquad
\mbox{in} 
\qquad 
M - \bigcup_{e \in \bdy{f}} \overline{\eta}(e)
\] 
which we call the \emph{two-handle} for $f$.
We make the neighbourhoods small enough that the two-handles are disjoint.
We define
\[
\overline{\eta}(f) = \eta(f) \cup \bigcup_{e \in \bdy{f}} \overline{\eta}(e) 
\]

We say that the collection of zero-, one-, and two-handles is a \emph{handle structure} $\eta(\calF)$ for $\calF$.
We say that a foam $\calG$ of $M$ is \emph{carried} by $\calF$ if there is a \emph{carrying function} $C$ from the vertices, edges, and faces of $\calG$ to the vertices, edges, and faces of $\calF$ so that for all cells $c$ of $\calG$ we have:
\begin{enumerate}
\item $c$ lies in $\overline{\eta}(C(c))$, 
\item $c$ does not lie in $\overline{\eta}(d)$ for any model facet $d$ of $C(c)$, and
\item the dimension of $C(c)$ is at most the dimension of $c$. \qedhere
\end{enumerate}

Note that when $\calG$ is carried by $\calF$, the carrying function is unique.

\subsection{Augmented 2-3 moves}
\label{Sec:Augmented2-3}

The main move we use is the following.

\def\mywidth{0.28}

\begin{figure}[htbp]
\subfloat[Before the augmented 2-3 move.]{
\labellist
\hair 2pt \small
\pinlabel $u$ [b] at 125 258
\pinlabel $e$ [r] at 126 184
\pinlabel $v$ [tr] at 126 82
\endlabellist
\includegraphics[width = \mywidth\textwidth]{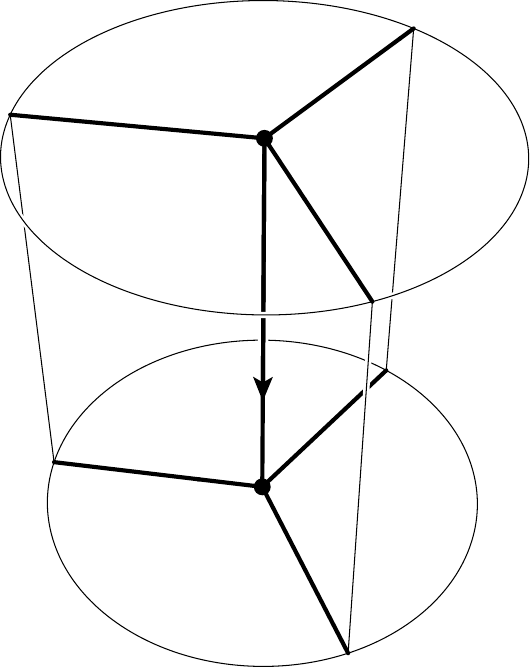}
\label{Fig:Augmented2-3Before}
}
\quad
\subfloat[After doing four V-moves.]{
\labellist
\hair 2pt \small
\pinlabel $f$ at 40 160
\endlabellist
\includegraphics[width = \mywidth\textwidth]{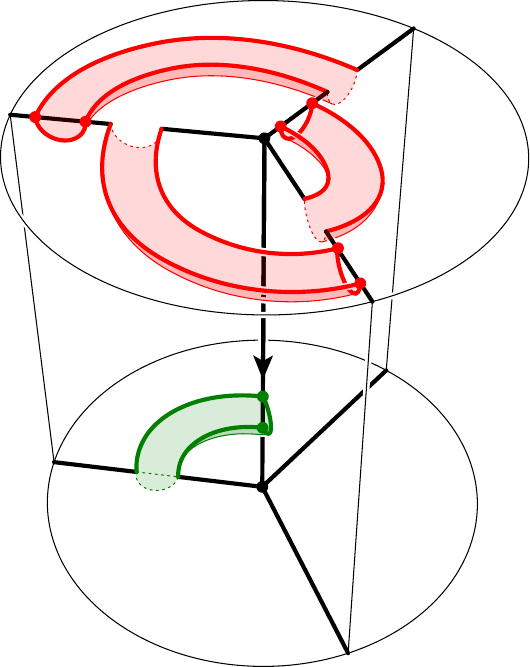}
\label{Fig:Augmented2-3AfterVMoves}
}
\quad
\subfloat[After moving the green snakelet into place.]{
\labellist
\hair 2pt \small
\pinlabel $f'$ at 225 245
\endlabellist
\includegraphics[width = \mywidth\textwidth]{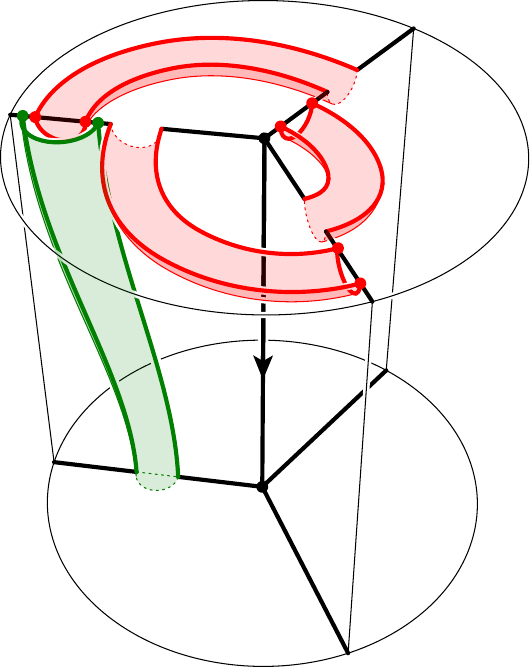}
\label{Fig:Augmented2-3HelperInPosition}
}

\subfloat[After sliding the innermost red snakelet out.]{
\includegraphics[width = \mywidth\textwidth]{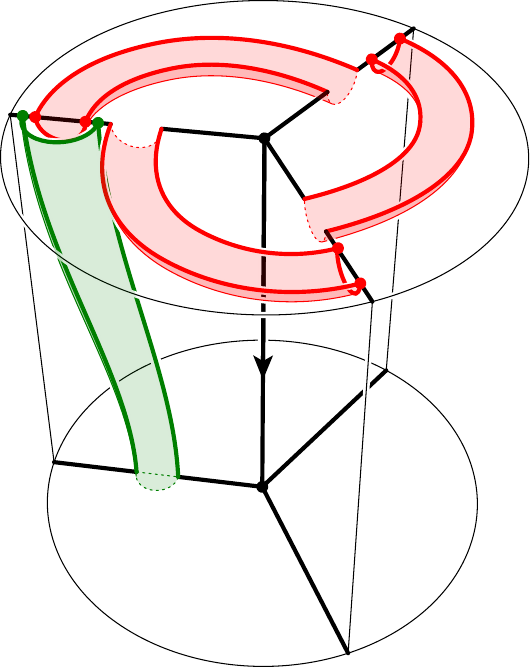}
\label{Fig:Augmented2-3AfterSlide}
}
\quad
\subfloat[After deconstructing the green snakelet.]{
\includegraphics[width = \mywidth\textwidth]{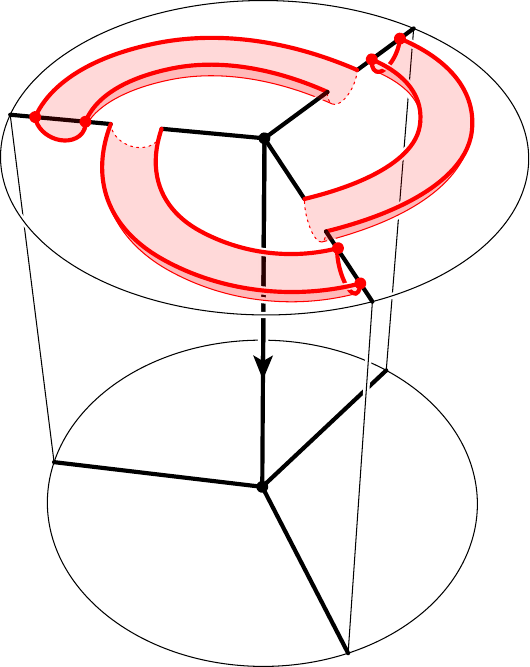}
\label{Fig:ImpossibleStaircase}
}
\quad
\subfloat[After the 2-3 move.]{
\includegraphics[width = \mywidth\textwidth]{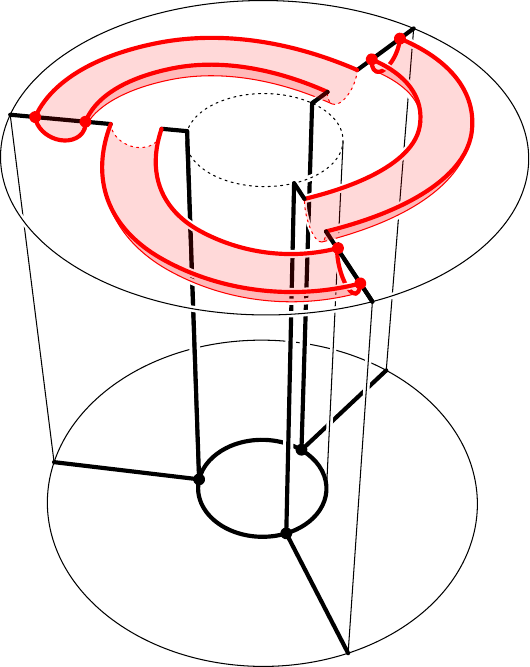}
\label{Fig:Augmented2-3Final}
}
\caption{An augmented 2-3 move.}
\label{Fig:Augmented2-3}
\end{figure}

\begin{definition}
\label{Def:Augmented2-3}
Suppose that $M$ is a compact, connected three-manifold with boundary.
Suppose that $L$ is a labelling of $\Delta_M$.
Suppose that $\calG$ is an $L$--essential foam in $M$.
Suppose that $e$ is an $L$--flippable edge of $\calG$.
Suppose that $e$ is equipped with an orientation, pointing from the vertex $u$ to the vertex $v$.
See \reffig{Augmented2-3Before}.
Note that $u \neq v$ because $e$ is not cyclic.
Fix small regular open neighbourhoods $\calN(e)$, $\calN(u)$, and $\calN(v)$ of $e$, $u$, and $v$ such that $\calN(u)$ and $\calN(v)$ lie within $\calN(e)$.
We perform an \emph{augmented 2-3 move} along $e$ to produce a foam $\calG_e$ as shown in \reffig{Augmented2-3Final}.

We require the following.
\begin{enumerate}
\item The augmented 2-3 move is supported within $\calN(e)$.
\item The three red snakelets shown in \reffig{Augmented2-3Final} are in $\calN(u)$.
\item All cells generated by the 2-3 move that do not have ancestors are in $\calN(v)$.
\item 
\label{Itm:Aug2-3Carried}
Suppose additionally that $\calG$ is carried by some foam $\calF$ with carrying function $C$.
In this case we also require that $\calN(c)$ lies within $\overline{\eta}(C(c))$ as $c$ ranges over $e$, $u$, and $v$. \qedhere
\end{enumerate} 
\end{definition}

\begin{lemma}
\label{Lem:Augmented2-3LEssential}
The augmented 2-3 move along $e$ can be realised by a sequence of 2-3 and 3-2 moves from $\calG$ to $\calG_e$ that pass through $L$--essential foams.
\end{lemma}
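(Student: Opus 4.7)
The plan is to realize the augmented 2-3 move by executing the sequence of configurations shown in \reffig{Augmented2-3}, verifying at each stage that the transition is achievable via 2-3 and 3-2 moves and that all intermediate foams remain $L$--essential.

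First, I would build the four snakelets depicted in \reffig{Augmented2-3AfterVMoves} by four successive V-moves. The initial V-move at $u$ is justified by \reflem{ImplementVMove}, using the $L$--flippable edge $e$ itself. After each V-move, the descendant of $e$ still has endpoints lying in the two outer regions of the original $e$; since these regions have distinct labels (because $e$ is $L$--flippable), the descendant is still $L$--flippable, and so Lemma \ref{Lem:ImplementVMove} justifies the next V-move. The same local argument shows that any other edge we might use as a ``pivot'' for a V-move at $u$ lies between two regions that meet along $e$ in $\calG$, and thus is $L$--flippable.

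Next, I would realize the snakelet motions in \reffig{Augmented2-3HelperInPosition} and \reffig{Augmented2-3AfterSlide} by repeated application of \reflem{SnakeletVertex}, and the deconstruction of the green snakelet in \reffig{ImpossibleStaircase} by inverse V-moves. The $L$--essentiality of each intermediate foam reduces to one key observation: because the augmented move is supported in $\calN(e)$, every region swept by a snakelet head descends from a region of $\calG$ already adjacent to $e$, and the two regions along any snakelet's belly are among the outer regions of $e$, which have distinct labels. Combined with $L$--essentiality of $\calG$ in neighbourhoods of $u$ and $v$, this ensures that no $L$--inessential face is ever created. Finally, I would perform the 2-3 move along the descendant of $e$ visible in \reffig{ImpossibleStaircase} to reach \reffig{Augmented2-3Final}. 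As before, this descendant inherits $L$--flippability from $e$, so the five new faces each lie between regions already adjacent through $e$.

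The main obstacle is the combinatorial bookkeeping: one must correctly identify which edges remain $L$--flippable after each snakelet is constructed or slid, and check that no snakelet head crosses between two regions sharing a label. The whole construction is supported in $\calN(e)$ and introduces no genuinely new pairs of regions in contact, so in principle these checks all reduce to $L$--flippability of $e$ together with $L$--essentiality of $\calG$ near $u$ and $v$; but the verification must still be carried out panel by panel through \reffig{Augmented2-3}, and this is where essentially all the work lies.
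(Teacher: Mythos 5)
Your proposal matches the paper's own proof: four applications of \reflem{ImplementVMove} to create the red and green snakelets, snakelet-head slides justified by \reflem{SnakeletVertex} (together with the extra 2-3/3-2 pair as between \reffig{ThreeEdges2b} and \reffig{ThreeEdges3b}), removal of the green snakelet by reversing its moves, and the final 2-3 move along $e$, with $L$--essentiality at every stage coming from the fact that all regions in play are the five distinctly-labelled regions incident to $e$. This is essentially the same argument, down to the panel-by-panel reliance on \reffig{Augmented2-3}.
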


\begin{proof}
First, we apply \reflem{ImplementVMove} four times, creating three \emph{red snakelets} and one \emph{green snakelet}.
See \reffig{Augmented2-3AfterVMoves}.
Next we slide the head of the green snakelet around the boundary of the face $f$, using \reflem{SnakeletVertex} three times. 
After a 2-3 and then a 3-2 move (similar to the moves between \reffig{ThreeEdges2b} and \reffig{ThreeEdges3b}), we reach \reffig{Augmented2-3HelperInPosition}, in which the belly of the green snakelet covers the head of one of the red snakelets.
This done, using \reflem{SnakeletVertex} we slide the innermost red snakelet around the face $f'$ to get to \reffig{Augmented2-3AfterSlide}.
The green snakelet ensures that we do not introduce an $L$--inessential face as we do this.
Next, we reverse the movement of the green snakelet by sliding it back around the boundary of $f$.
We undo the V-move, deconstructing the green snakelet.
This takes us to \reffig{ImpossibleStaircase}.
Finally we apply the 2-3 move along $e$, giving the foam $\calG_e$ shown in \reffig{Augmented2-3Final}.
\end{proof}

We collect several useful properties of the augmented 2-3 move.
In particular, augmented 2-3 moves do not create cyclic edges, and so can safely be used to destroy them.
The third property below refers to $\Nature$, a collection of \emph{nature reserve edges}.
See \refsec{ParallelSequences}.

\begin{lemma}
\label{Lem:AugmentedOmnibus}
Suppose that $\calG$ is carried by $\calF$.
Suppose that $\calG$ is $L$--essential.
Suppose that $e$ is an $L$--flippable oriented edge of $\calG$.
Suppose that $\calG_e$ is the result of applying an augmented 2-3 move along $e$.
Then we have the following.

\begin{enumerate}
\item
\label{Itm:AugmentedLEssential}   
The foam $\calG_e$ is $L$--essential.
\item
\label{Itm:AugmentedLFlippable}
The foam $\calG_e$ has an $L$--flippable edge.
\item
\label{Itm:CyclicEdges}
No cyclic edges of $\calG_e$ meet $\calN(e)$. 
\item
\label{Itm:Carried}
The foam $\calG_e$ is carried by $\calF$.
\item 
\label{Itm:MaintainConnectivity}
Suppose that $w$ is a vertex of $\calF$.
Suppose that $\Nature$ is a collection of edges in $\calG^{(1)}$.  
Suppose that $e$ is not in $\Nature$.
Let $\Nature_e$ be the collection of descendants of the edges of $\Nature$ under the augmented 2-3 move.
Suppose that $X = \calG^{(1)} \cap \eta(w) - \Nature$ is connected.
Then $X_e = \calG_e^{(1)} \cap \eta(w) - \Nature_e$ is connected.
\end{enumerate}
\end{lemma}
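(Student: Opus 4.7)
The plan is to verify each of the five conclusions in turn, leaning on the step-by-step construction of the augmented 2-3 move from \reflem{Augmented2-3LEssential} and on the neighbourhood conditions imposed by \refdef{Augmented2-3}. Conclusion \refitm{AugmentedLEssential} is immediate: \reflem{Augmented2-3LEssential} realises the augmented 2-3 move by a sequence of 2-3 and 3-2 moves passing through $L$--essential foams, and $\calG_e$ is the last term. For \refitm{AugmentedLFlippable}, I would focus on the final 2-3 move in that construction, taking \reffig{ImpossibleStaircase} to \reffig{Augmented2-3Final}. It produces a triangular face $T$ in $\calG_e$ whose bounding edges meet at new vertices inside $\calN(v)$. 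The 3-2 move at $T$ undoes this last step and returns to an $L$--essential foam, so it preserves $L$--essentiality; the remark after \refdef{LFlippable} then shows that every edge of $\calG_e$ adjacent to $T$ is $L$--flippable.

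For \refitm{CyclicEdges}, I would observe that every vertex of $\calG_e$ lying in $\calN(e)$ is new, created either by one of the four V-moves or by the final 2-3 move, and that inspection of \reffig{Augmented2-3Final} shows that these new vertices are pairwise distinct. An edge of $\calG_e$ lying entirely inside $\calN(e)$ therefore has distinct endpoints, while an edge meeting $\calN(e)$ but running outside has one new endpoint inside $\calN(e)$ and one old endpoint of $\calG$ outside; in neither case can the edge be cyclic. For \refitm{Carried}, I would extend the carrying function $C$ to $\calG_e$ by sending cells contained in $\calN(u)$ to $C(u)$, cells in $\calN(v)$ to $C(v)$, and cells in $\calN(e) - (\calN(u) \cup \calN(v))$ to $C(e)$. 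The containment $c \subset \overline{\eta}(C(c))$ follows from part \refitm{Aug2-3Carried} of \refdef{Augmented2-3}; the disjointness from facet neighbourhoods is arranged by choosing $\calN(u)$ and $\calN(v)$ to exhaust the parts of $\calN(e)$ lying in the neighbourhoods of the endpoints of $C(e)$; and the dimension bound holds because the only new zero-cells of $\calG_e$ inside $\calN(e)$ already lie in $\calN(u) \cup \calN(v)$, and so are carried by vertices of $\calF$.

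The main obstacle is \refitm{MaintainConnectivity}. The key reduction is that if $w$ is not an endpoint of the edge $C(e)$ of $\calF$, then $\eta(w) \cap \overline{\eta}(C(e)) = \emptyset$, so the augmented 2-3 move does not alter $\calG^{(1)} \cap \eta(w)$ and $X_e = X$ is connected. By symmetry the remaining case is $w = C(u)$, where the changes in $\eta(w)$ all occur inside $\calN(u)$. The plan is to check that the portion of $\calG_e^{(1)}$ lying inside $\calN(u)$ is a connected subgraph $K$ that meets the descendant of every edge of $\calG$ that was incident to $u$. Since $e \notin \Nature$, every edge of $\Nature_e$ meeting $\calN(u)$ is the descendant of an edge of $\Nature$ incident to $u$, and each such descendant still has an endpoint on $K$ that lies outside $\Nature_e$. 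Then $X_e$ is obtained from $X$ by replacing the portion inside $\calN(u)$ with $K$ and re-attaching the surviving descendants of non-$\Nature$ edges of $X$ at their new endpoints on $K$. Because $K$ is connected, and because every path in $X$ that passes through the old cluster at $u$ can be rerouted through $K$, the connectedness of $X$ lifts to connectedness of $X_e$.
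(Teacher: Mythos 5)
Your handling of \refitm{AugmentedLEssential}, \refitm{AugmentedLFlippable}, and \refitm{Carried} matches the paper's proof. The first gap is in \refitm{CyclicEdges}. Your dichotomy --- an edge of $\calG_e$ is either contained in $\calN(e)$, or has one new endpoint inside $\calN(e)$ and one old endpoint of $\calG$ outside --- omits edges that leave $\calN(e)$ and re-enter it. These occur exactly when $\calG$ has a cyclic edge based at $u$ or $v$, or a second edge joining $u$ to $v$; and this case cannot be dismissed, since in the loosening stage the augmented 2-3 move is applied precisely to foams that may have cyclic edges at $u$ or $v$ (indeed, destroying them is the point). The paper closes this case by observing in \reffig{Augmented2-3Final} that no new vertex has more than one edge-end leaving the figure, so an edge that exits and returns has two distinct new endpoints. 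Relatedly, pairwise distinctness of the nine new vertices does not by itself rule out an edge inside $\calN(e)$ having both ends at a single vertex; one also needs the figure inspection that none of the interior edges is a loop.

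The second gap is in \refitm{MaintainConnectivity}. Your reduction presumes that $C(e)$ is an edge of $\calF$ with distinct endpoints and that, for $w = C(u)$, all changes inside $\eta(w)$ are confined to $\calN(u)$. Neither holds in general: the carrying condition only gives $\dim C(e) \le 1$, so $C(e)$ may be a \emph{vertex} $w$ of $\calF$, in which case the entire move is supported inside $\eta(w)$ --- and this is the typical situation in the paper's application (tunnelling through a zero-handle, \refsec{Tunnelling}); moreover $C(e)$ may be a cyclic edge of $\calF$ or $C(u) = C(v)$, so that $\eta(w)$ contains $\calN(u) \sqcup \calN(v)$; and even in your generic case $\eta(w)$ may contain part of $\calN(e)$ beyond $\calN(u)$ in which the one-skeleton changes (the long edges of the staircase), so $X - \calN(u)$ and $X_e - \calN(u)$ need not agree. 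The paper instead lists the five possible positions of $\eta(w)$ relative to $\calN(u)$, $\calN(v)$, $\calN(e)$, checks that $\calG_e^{(1)} \cap \calN(c) - \Nature_e$ is connected for each $c \in \{u, v, e\}$, and applies the graph-swap observation (\reffac{GraphSwap}). Your $K$/rerouting argument is the same idea restricted to one case; to repair it you must run it with $\calN(e)$, or $\calN(u) \sqcup \calN(v)$, in place of $\calN(u)$, and you must also verify that deleting the $\Nature_e$ edge-ends leaves the local graph connected, not merely that the attachment points of surviving edges lie on $K$.
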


\begin{proof}
Property \refitm{AugmentedLEssential} follows from \reflem{VMoveL} (or \reflem{Augmented2-3LEssential}) and the fact that $e$ is $L$-flippable.
Each of the three edges created by the final 2-3 move along $e$ is $L$--flippable. 
This gives property \refitm{AugmentedLFlippable}. 

Since the augmented 2-3 move occurs within a ball, all nine vertices it generates are distinct.
By consulting \reffig{Augmented2-3Final} we see that there are no cyclic edges entirely contained within the figure, and no vertex has more than one edge-end leaving the figure.
This gives property \refitm{CyclicEdges}.

Suppose that $C$ is the carrying function for $\calG$ in $\calF$.
Suppose that $e$ was oriented from vertex $u$ to vertex $v$.
Suppose that $d$ is a cell of $\calG_e$.
There are three cases.
First suppose that $d$ is disjoint from $\calN(e)$.
Then $c = d$ is the ancestor of $d$ in $\calG$.
Thus $d$ is carried by the cell $C(c)$ in $\calF$.
Second, suppose that $d$ meets but is not contained in $\calN(e)$.
(There are six edge ends and nine pieces of faces of this type.)
Consulting \reffig{Augmented2-3Final} we see that $d$ is strictly contained in a cell $c$ of $\calG$.
Furthermore, $d$ is carried by $C(c)$.
Finally, suppose that $d$ is contained in $\calN(e)$.
(There are nine vertices, fifteen edges, and seven faces of this type.)
By \refdef{Augmented2-3}\refitm{Aug2-3Carried}, each of these is carried by one of $C(u)$, $C(v)$, or $C(e)$.
Thus we have established \refitm{Carried}. 

Let $c$ be any one of $u$, $v$, or $e$.
Consulting \reffig{Augmented2-3Before} (respectively \reffig{Augmented2-3Final}) we see that $\Nature$ ($\Nature_e$) meets $\calG^{(1)} \cap \calN(c)$ ($\calG_e^{(1)} \cap \calN(c)$) in at most only edge ends.
Again consulting the figures, we see that $\calG^{(1)} \cap \calN(c) - \Nature$ and $\calG_e^{(1)} \cap \calN(c) - \Nature_e$ are each connected.
We now recall the following. 

\begin{fact}
\label{Fac:GraphSwap}
Suppose that $A \cup B$ and $A \cup B'$ are graphs with $A$ being a subgraph of both $A \cup B$ and $A \cup B'$, while $B$, and $B'$ are subgraphs of $A \cup B$ and $A \cup B'$ respectively.
Suppose that $A \cap B = A \cap B'$.
Suppose that $B$ and $B'$ are both connected.
Suppose that $A \cup B$ is connected.
Then $A \cup B'$ is connected.
\end{fact}

The three-ball $\eta(w)$ either: 
\begin{itemize}
\item is disjoint from $\calN(e)$, 
\item contains $\calN(e)$,
\item contains $\calN(u)$ but not $\calN(v)$,
\item contains $\calN(v)$ but not $\calN(u)$, or
\item contains $\calN(u) \sqcup \calN(v)$ but not $\calN(e)$.
\end{itemize}

Thus the number of components of $\eta(w) \cap \calN(e)$ is either zero, one, or two.
If $\eta(w) \cap \calN(e)$ is empty then $X = X_e$ and we are done.
Suppose instead that $\eta(w)$ contains $\calN(e)$.
We are given that $X$ is connected.
By taking closures, $X$ and $X_e$ become graphs.
Take $A = X - \calN(e) = X_e - \calN(e)$, take $B = X \cap \calN(e) = \calG^{(1)} \cap \calN(e) - \Nature$, and take $B' = X_e \cap \calN(e) =  \calG_e^{(1)} \cap \calN(e) - \Nature_e$.
By the above argument, $B$ and $B'$ are both connected.
Then $A \cup B =X$, $A \cup B' = X_e$, and $A \cap B = A \cap B'$.
Applying \reffac{GraphSwap}, we get that $X_e$ is connected.

The argument is similar in the other cases, replacing $\calN(e)$ with $\calN(u)$, $\calN(v)$, or their disjoint union.
(In the latter case, we apply \reffac{GraphSwap} twice.)
Thus we obtain \refitm{MaintainConnectivity}.
\end{proof}

\begin{remark}
Carrying only places restrictions on the zero-, one-, and two-dimensional cells.
The three-dimensional complementary regions (components of $M - \calG_e$) are not similarly restricted. 
Indeed the main purpose of the augmented 2-3 move is to grow complementary regions throughout the universal cover.
\end{remark}

\subsection{Nature reserve moves}
\label{Sec:NatureReserve}

In order to define a nature reserve move, we make the following assumptions:
Suppose that $M$ is a compact, connected three-manifold with boundary.
Suppose that $L$ is a labelling of $\Delta_M$.
Suppose that $\calG$ is an $L$--essential foam in $M$.
Suppose that $e$ is an $L$--flippable edge of $\calG$.
Suppose that $p$ is one of the endpoints of $e$ (since $e$ is not cyclic it has distinct endpoints).
Let $\calN(p)$ be a small regular neighbourhood of $p$.
If $\calG$ is carried by some foam $\calF$ with carrying function $C$ then we further require that $\calN(p)$ lies within $\eta(C(p))$.

Suppose that $\phidelta$ is an arc in a face of $\calG$ with at least one endpoint on $e$.
Suppose that $\calG[\delta]$ is $L$--essential and has an $L$--flippable edge. 
There are two possibilities. 
Either the arc $\phidelta$ has \emph{exactly one} endpoint $d$ on $e$, or it has \emph{both} endpoints, $d$ and $d'$, on $e$.

\subsubsection{Singleton nature reserve move}
\label{Sec:SingletonNatureReserve}

\begin{figure}[htbp]
\subfloat[Part of the foam $\calG'$. The arc $\phidelta$ has an endpoint $d$ on $e$.]{
\labellist
\hair 2pt \small
\pinlabel $p$ [b] at 128 257
\pinlabel $d$ [r] at 126 234
\pinlabel $e$ [r] at 126 124
\pinlabel $\phidelta$ [t] at 165 256
\endlabellist
\includegraphics[width = 0.41\textwidth]{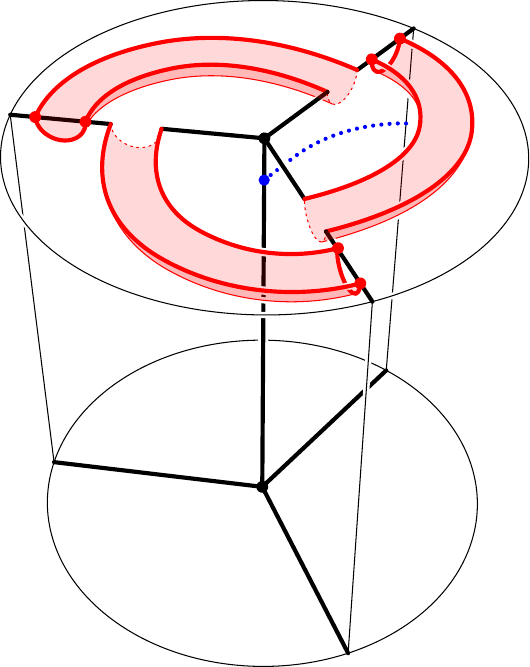}
\label{Fig:ImpossibleStaircaseDelta}
}
\quad
\subfloat[Part of the foam $\calG_p$. A nature reserve edge protects $d$.]{
\includegraphics[width = 0.41\textwidth]{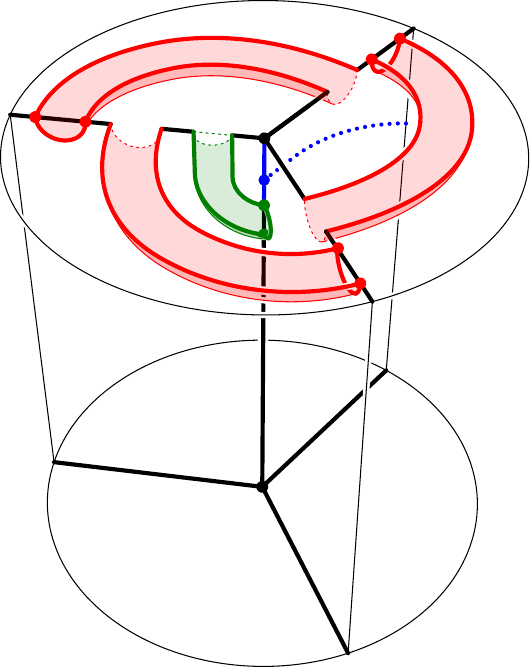}
\label{Fig:ImpossibleStaircaseNatureReserve}
}

\subfloat[Close up of \reffig{ImpossibleStaircaseDelta}.]{
\labellist
\hair 2pt \small
\pinlabel $p$ [tl] at 45 62
\pinlabel $\phidelta$ [r] at 124 80
\pinlabel $d$ [t] at 120 57
\pinlabel $e$ [b] at 185 57
\endlabellist
\includegraphics[width = 0.41\textwidth]{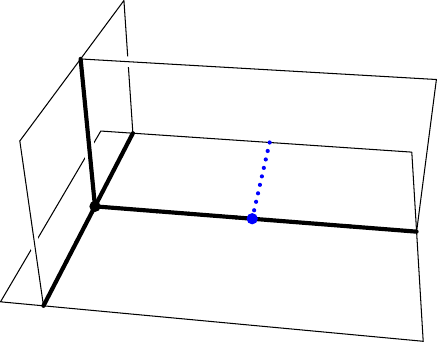}
\label{Fig:NatureReserveNoRed}
}
\quad
\subfloat[Close up of \reffig{ImpossibleStaircaseNatureReserve}.]{
\labellist
\hair 2pt \small
\pinlabel $e'$ [b] at 140 60
\pinlabel $e''$ [t] at 156 56
\pinlabel $e'''$ [b] at 186 57
\endlabellist
\includegraphics[width = 0.41\textwidth]{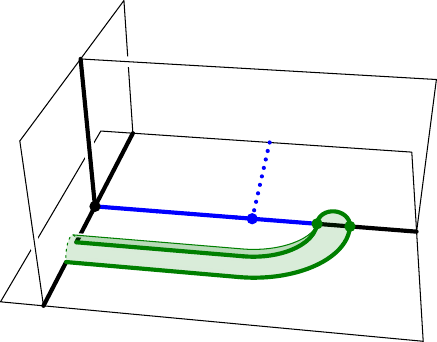}
\label{Fig:NatureReserveRed}
}

\subfloat[Part of the foam \mbox{$\calG[\delta]$}. ]{  
\labellist
\hair 2pt \small
\pinlabel $p$ [tl] at 45 62
\endlabellist
\includegraphics[width = 0.41\textwidth]{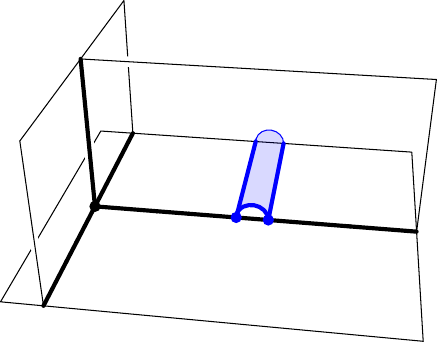}
\label{Fig:NatureReserveGreenNoRed}
}
\quad
\subfloat[Part of the foam \mbox{$\calG_p[\delta]$}.]{
\includegraphics[width = 0.41\textwidth]{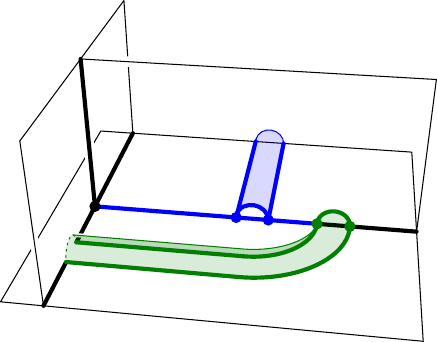}
\label{Fig:NatureReserveGreenRed}
}
\caption{Introducing a nature reserve edge $e'$. In \reffig{NatureReserveGreenNoRed}, we are so close to $p$ that this is also a correct picture of \mbox{$\calG'[\delta]$}.}
\label{Fig:NatureReserveEasy}
\end{figure}

Suppose that the arc $\phidelta$ has precisely one endpoint $d$ on $e$.

\begin{definition}
\label{Def:SingletonNatureReserveMove}
We perform \emph{singleton nature reserve move} at $p$ along $e$ to produce a foam $\calG_p$.
This foam is shown in \reffig{ImpossibleStaircaseNatureReserve}.
We properly isotope $\phidelta$ within $\phif$ to move $d$ along $e$ so that $d$ lies within $\calN(p)$.
We introduce three \emph{red snakelets} and one \emph{green snakelet}, all contained within $\calN(p)$.
The green snakelet lies on a (descendant of a) face $g$ of $\calG \cap \calN(p)$ so that 
\begin{itemize}
\item 
$g$ is incident to the interior of $e$,
\item 
$g$ does not intersect the interior of $\phidelta$, and
\item
along $e$, the endpoint $d$ lies between $p$ and the vertices on the green snakelet.
\end{itemize}
(There are two possibilities for $g$.)
See \reffig{NatureReserveRed}.
In $\calG_p$, the edge $e$ has been split into three segments.
The first of these is the \emph{nature reserve edge}, $e'$, which ``protects'' the endpoint $d$ of $\phidelta$.
The second, with interior incident to the green snakelet, is $e''$ say.
The third and last is the \emph{remainder}, $e'''$ say.
See \reffig{NatureReserveRed}.
\end{definition}

\begin{lemma}
\label{Lem:SingletonNatureReserveMove}
For either choice of vertex $p$, the singleton nature reserve move at $p$ along $e$ can be realised by a sequence of 2-3 and 3-2 moves from $\calG$ to $\calG_p$ with all foams being $L$--essential.
\end{lemma}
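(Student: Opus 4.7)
The plan is to mimic the construction from the proof of \reflem{Augmented2-3LEssential}, with two modifications: first, I isotope the planned 0-2 arc $\phidelta$ to move $d$ into $\calN(p)$; second, I do not perform the final 2-3 move along $e$, so that the green snakelet is retained in the final foam.

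The first step is to isotope $\phidelta$ within $\phif$ to slide $d$ along $e$ into $\calN(p)$. This isotopy changes only the arc itself, not the foam $\calG$, and avoids the vertices of $\calG$. Consequently both $\calG$ and $\calG[\delta]$ are unchanged up to combinatorial equivalence, and the standing hypotheses (including $L$-essentiality of $\calG[\delta]$ and the existence of an $L$-flippable edge in $\calG[\delta]$) carry over.

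Next I apply \reflem{ImplementVMove} four times to create three red snakelets and one green snakelet, each contained in $\calN(p)$, following the opening of the proof of \reflem{Augmented2-3LEssential}. The first V-move uses the $L$-flippability of $e$; each subsequent V-move is performed at a newly-created vertex along a descendant of $e$, which remains $L$-flippable because, by \reflem{VMoveL}, its two endpoint regions still carry distinct labels. No V-move introduces an $L$-inessential face since each new face separates regions that already shared a face in the prior foam.

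The principal subtlety will be verifying that the green snakelet can be arranged to satisfy the three positional conditions of \refdef{SingletonNatureReserveMove} simultaneously: incidence to the interior of $e$, disjointness from the interior of $\phidelta$, and the correct ordering of $d$ relative to the green snakelet's vertices along $e$. Because the preliminary isotopy has already pulled $d$ close to $p$, there is sufficient freedom in choosing the direction of the fourth V-move, and between the two candidate faces $g$, to guarantee all three conditions at once. By the symmetry of the construction in the two endpoints of $e$, the same argument applies to either choice of $p$.
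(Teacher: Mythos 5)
There is a genuine gap here, rooted in a misreading of the order of operations in the proof of \reflem{Augmented2-3LEssential}. In that construction the green snakelet is a \emph{temporary helper}: it is created, slid around the boundary of $f$, used to shield the slide of the innermost red snakelet around $f'$, slid back, and then deconstructed, all \emph{before} the final 2-3 move along $e$ (the deconstruction takes us to \reffig{ImpossibleStaircase}, and only then is the 2-3 move applied to reach \reffig{Augmented2-3Final}). So your plan of ``not performing the final 2-3 move so that the green snakelet is retained'' does not produce the green snakelet of \refdef{SingletonNatureReserveMove}. Conversely, if you stop right after the four V-moves (\reffig{Augmented2-3AfterVMoves}), the three red snakelets are not yet in the interlocked ``impossible staircase'' arrangement that defines $\calG_p$ (\reffig{ImpossibleStaircaseNatureReserve}); the steps you omit --- moving the helper green snakelet into position and sliding the innermost red snakelet around $f'$ under its protection --- are exactly the moves that produce that arrangement, and they are also where the $L$--essentiality bookkeeping has real content. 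The later parts of the paper (for instance \refrem{Remainder} and \reflem{NatureReserveOmnibus}) depend on this specific combinatorial output, so a foam consisting merely of four V-move snakelets is not $\calG_p$.

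The paper's proof instead runs the steps of \refsec{Augmented2-3} within $\calN(p)$, with $e$ oriented away from $p$, all the way to \reffig{ImpossibleStaircase} (so the helper green snakelet has come and gone and the staircase of red snakelets is in place), obtaining the foam $\calG'$; then, having isotoped $d$ into $\calN(p)$, it performs one further V-move at the vertex $p$ via \reflem{ImplementVMove} (legitimate because $e$ is $L$--flippable), and it is \emph{this} V-move that creates the protecting green snakelet and splits $e$ into $e'$, $e''$, and $e'''$ as in \reffig{NatureReserveRed}. Your ``principal subtlety'' about choosing the face $g$ and the direction of ``the fourth V-move'' conflates the helper green snakelet with this protecting one; once $d$ has been pulled into $\calN(p)$ the positioning of the protecting snakelet is the easy part, and the real work of the lemma is the staircase construction your proposal skips.
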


\begin{proof}
Since $e$ is $L$--flippable, we follow the steps of \refsec{Augmented2-3}, within $\calN(p)$, to reproduce the construction shown in Figures~\ref{Fig:Augmented2-3Before} to~\ref{Fig:ImpossibleStaircase}, (with $e$ oriented away from $p$).
We call the resulting foam $\calG'$.
See \reffig{ImpossibleStaircaseDelta}.

We perform a V-move at the vertex $p$ (shown in closeup in \reffig{NatureReserveNoRed}) to produce the foam $\calG_p$; 
the hypotheses of \reflem{ImplementVMove} are satisfied because $e$ is $L$--flippable.
\end{proof}

\begin{lemma}
\label{Lem:SingletonNatureReserveMoveDelta}
There is a choice of vertex $p$ so that there is a sequence of 2-3 and 3-2 moves from $\calG[\delta]$ to $\calG_p[\delta]$ with all foams being $L$--essential.
\end{lemma}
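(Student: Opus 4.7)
The plan is to carry out the same construction as in \reflem{SingletonNatureReserveMove}, but applied to $\calG[\delta]$ rather than to $\calG$. First, I choose $p$ to be an endpoint of $e$ and note that, since $\phidelta$ is only determined up to proper isotopy within $\phif$, I may choose a representative for $\phidelta$ with $d$ lying in $\calN(p) \cap e$. This adjustment does not change the combinatorial structure of $\calG[\delta]$; it merely fixes the target structure near $p$ for $\calG_p[\delta]$.

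Next, I perform the augmented 2-3 move of \refdef{Augmented2-3} along $e$ (oriented from $p$ outwards). By \reflem{Augmented2-3LEssential}, this is realized by a sequence of 2-3 and 3-2 moves preserving $L$-essentiality, provided the edge in question is $L$-flippable. In $\calG[\delta]$, the portion of $e$ adjacent to $p$ remains $L$-flippable: its five incident labels in $\calG$ are preserved, with perhaps one outer region cut open by the snakelet of $\delta$, but all five labels are still distinct. The augmented 2-3 construction takes place within $\calN(p)$ and produces a foam whose local structure near $p$ matches the configuration of \reffig{ImpossibleStaircaseDelta}. Finally, I apply the V-move at $p$, mirroring the second step of \reflem{SingletonNatureReserveMove}. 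This V-move is possible by \reflem{ImplementVMove} because the augmented 2-3 construction has produced an $L$-flippable edge incident to $p$; it introduces the nature reserve edge $e'$ protecting $d$, and the result is precisely $\calG_p[\delta]$.

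The main obstacle is verifying that $\delta$'s snakelet, whose head sits at $d$ inside $\calN(p)$, does not cause any intermediate foam in the augmented 2-3 sequence to fail $L$-essentiality. The verification runs parallel to the one in \reflem{Augmented2-3LEssential}: each new face produced without an ancestor appears between complementary regions that were already adjacent (via some other face) in $\calG[\delta]$, so essentiality is preserved through each step. A subsidiary check is that the snakelet of $\delta$ does not obstruct the V-moves or snakelet slides used inside the augmented 2-3 construction; this follows because those moves take place strictly on the side of $f$ opposite to where the snakelet of $\delta$ lies, which is available to us by our freedom in choosing which side of $f$ the augmented 2-3 construction bends into.
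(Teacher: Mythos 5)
Your overall plan (repeat the $\calG \to \calG_p$ construction inside $\calG[\delta]$, ending with the V-move at $p$) is the same as the paper's, but you have missed the one point that makes this lemma genuinely different from \reflem{SingletonNatureReserveMove}: the choice of $p$ cannot be arbitrary, which is exactly why the statement only asserts that \emph{some} endpoint of $e$ works. In $\calG[\delta]$ the 0-2 move along $\phidelta$ subdivides $e$ at a new vertex at $d$, and the segment of $e$ between $p$ and that vertex acquires a new outer region $R$ at its $d$-end, namely the region inside the snakelet generated by $\phidelta$ (see \reffig{NatureReserveGreenNoRed}). Your claim that ``the portion of $e$ adjacent to $p$ remains $L$-flippable because its five incident labels in $\calG$ are preserved'' is false in general: $L$--essentiality of $\calG[\delta]$ only forces $L(R)$ to differ from labels of regions sharing a face with $R$, and $L(R)$ may well equal the label of the outer region at $p$. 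In that case the segment between $p$ and the head of the blue snakelet is not $L$--flippable, and \reflem{ImplementVMove} cannot be used to start the V-moves at $p$. The paper's proof isolates exactly this: with $P$ and $Q$ the outer regions at the two ends of $e$, $L$--flippability of $e$ gives $L(P) \neq L(Q)$, so $L(R)$ differs from at least one of them, and $p$ is then \emph{chosen} to be the endpoint whose outer region does not have label $L(R)$.

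The same issue defeats your subsidiary check. The obstruction coming from $\phidelta$'s snakelet is not confined to one side of $f$: its head sits at the new vertex on $e$ itself, and in $\calG_p[\delta]$ the endpoint $d$ must lie between $p$ and the green snakelet, so the head of the green snakelet has to be built and then slid past the head of the blue snakelet along $e$ using \reflem{SnakeletVertex}; no choice of which side of $f$ the construction bends into avoids crossing that vertex. That slide again requires the label at $p$'s end to differ from $L(R)$, i.e.\ it requires the same choice of $p$. (A smaller slip: the nature reserve move produces the configuration of \reffig{ImpossibleStaircase} together with the persistent green snakelet, \emph{without} the final 2-3 move along $e$ --- that 2-3 move happens later, in the loosening move along the remainder edge --- so you should reproduce only the steps up to \reffig{ImpossibleStaircase}, not the full augmented 2-3 move of \refdef{Augmented2-3}.)
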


\begin{proof}
Suppose that $\cover{e}$ is a lift of $e$.
Let $P$ and $Q$ be the regions of $\cover{M} - \cover{\calG}$ that meet the endpoints of $\cover{e}$ but not the interior.
Since $e$ is $L$--flippable, $L(P) \neq L(Q)$. 
After performing the 0-2 move along $\phidelta$, \reffig{NatureReserveNoRed} becomes \reffig{NatureReserveGreenNoRed}.
Here we see an additional region $R$ of $\cover{M} - \cover{\calG}[\delta]$ that meets the interior of $\cover{e}$.
Within the figure, $R$ is inside the snakelet.
Since $L(P) \neq L(Q)$, the label $L(R)$ is different from at least one of $L(P)$ and $L(Q)$.
Let $\cover{p}$ be an endpoint of $\cover{e}$ incident to a region (either $P$ or $Q$) which does not have label $L(R)$.
Let $p = \phi_M(\cover{p})$ be the image under the covering map.

We deduce that the edge between $p$ and the head of the blue snakelet is $L$--flippable.
Thus following the steps of \refsec{Augmented2-3} as before, we can get from $\calG[\delta]$ to $\calG'[\delta]$.
Again by our choice of $p$ we can apply \reflem{ImplementVMove} and \reflem{SnakeletVertex} to build and then slide the head of the green snakelet past the head of the blue snakelet built along $\phidelta$.
This done, we have reached $\calG_p[\delta]$. 
Again, see \reffig{NatureReserveGreenRed}.
\end{proof}

\subsubsection{Pair nature reserve move}
\label{Sec:PairNatureReserve}

Suppose that the arc $\phidelta$ has both endpoints $d$ and $d'$ on $e$.

\begin{definition}
We perform a \emph{pair nature reserve move} at $p$ along $e$ to produce a foam $\calG_p$.
This foam is shown in \reffig{ImpossibleStaircasePairNatureReserve}.
We properly isotope $\phidelta$ within $\phif$ to move $d$ and $d'$ along $e$ so that both lie within $\calN(p)$.
Exactly as in \refdef{SingletonNatureReserveMove}, we introduce three \emph{red snakelets} and one \emph{green snakelet}, all contained within $\calN(p)$.
We arrange matters as in \refdef{SingletonNatureReserveMove}, except that now 
\begin{itemize}
\item
along $e$, the endpoints $d$ and $d'$ lie between $p$ and the vertices on the green snakelet.
\end{itemize}
(In \refdef{SingletonNatureReserveMove} there were two possibilities for $g$; now there is only one.)
See \reffig{NatureReserve2PathRed}.
As in \refdef{SingletonNatureReserveMove}, in $\calG_p$ the edge $e$ has been split into three segments. 
These are the \emph{nature reserve edge} $e'$ (containing $d$ and $d'$), the edge $e''$, whose interior is incident to the green snakelet, and the \emph{remainder} $e'''$. 
See \reffig{NatureReserve2PathRed}.
\end{definition}

The proof of the following lemma is identical to that of \reflem{SingletonNatureReserveMove} and we omit it.

\begin{lemma}
\label{Lem:PairNatureReserveMove}
For either choice of vertex $p$, the pair nature reserve move at $p$ along $e$ can be realised by a sequence of 2-3 and 3-2 moves from $\calG$ to $\calG_p$ with all foams being $L$--essential. \qed
\end{lemma}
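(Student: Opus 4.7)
The plan is to follow exactly the template of the proof of \reflem{SingletonNatureReserveMove}, since the pair case differs from the singleton case only in a geometric constraint on where the arc $\phidelta$ sits within $\phif$, and not in what moves must be performed to transform $\calG$ into $\calG_p$. Indeed, the construction of $\calG_p$ from $\calG$ does not mention $\phidelta$ at all: it builds three red snakelets and one green snakelet within $\calN(p)$ using only the hypothesis that $e$ is $L$--flippable.

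First, since $e$ is $L$--flippable and we have chosen $\calN(p)$ small enough that $e \cap \calN(p)$ meets only the one-handles and zero-handles of $e$ and its endpoints, we apply the augmented 2-3 move construction of \refsec{Augmented2-3} within $\calN(p)$, with $e$ oriented away from $p$. This produces the three red snakelets together with the configuration shown in \reffig{ImpossibleStaircase}. By \reflem{Augmented2-3LEssential}, the entire sequence of 2-3 and 3-2 moves implementing this construction passes through $L$--essential foams. Call the resulting foam $\calG'$.

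Next, we perform a V-move at the vertex $p$ to introduce the green snakelet on the face $g$. In the pair case the geometric constraint that \emph{both} $d$ and $d'$ lie between $p$ and the vertices of the green snakelet forces a unique choice of $g$ (as opposed to the two choices available in the singleton case), but this selects an allowed V-move rather than changing the local construction. The hypotheses of \reflem{ImplementVMove} are satisfied because the edge incident to $p$ along which we apply the V-move is a descendant of $e$ and inherits $L$--flippability from it. The result is the foam $\calG_p$, in which $e$ has been split into the nature reserve edge $e'$ containing $d$ and $d'$, the edge $e''$ incident to the green snakelet, and the remainder $e'''$.

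The only place where vigilance is required is in confirming that the forced choice of $g$ in the pair case does still permit the V-move: that is, that $g$ is genuinely a face of $\calG \cap \calN(p)$ incident to the interior of $e$. This follows since two faces of $\calG$ are incident to the interior of $e$ on either side of $\phidelta$, and the unique allowed $g$ is simply the one of these two faces on whichever side lets the green snakelet vertices lie further from $p$ along $e$ than both of $d,d'$. With $g$ in hand, the argument is word-for-word the same as that of \reflem{SingletonNatureReserveMove}, so I anticipate no additional obstacle.
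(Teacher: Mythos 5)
Your proposal is correct and matches the paper exactly: the paper omits the proof, stating it is identical to that of \reflem{SingletonNatureReserveMove}, which is precisely what you carry out (the augmented 2-3 construction of \refsec{Augmented2-3} inside $\calN(p)$ up to \reffig{ImpossibleStaircase}, followed by the V-move at $p$ via \reflem{ImplementVMove} using the $L$--flippability of $e$). Your side remark about the forced choice of the face $g$ is a feature of the definition rather than something the proof must establish, and it does not affect the argument.
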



\begin{figure}[htbp]
\subfloat[Part of the foam $\calG'$. The arc $\phidelta$ has both endpoints, $d$ and $d'$ on $e$.]{
\labellist
\hair 2pt \small
\pinlabel $p$ [b] at 128 257
\pinlabel $d$ [r] at 123 244
\pinlabel $d'$ [r] at 126 224
\pinlabel $e$ [r] at 126 124
\pinlabel $\phidelta$ [t] at 165 256
\pinlabel $\phidelta$ [tr] at 143 218
\endlabellist
\includegraphics[width = 0.41\textwidth]{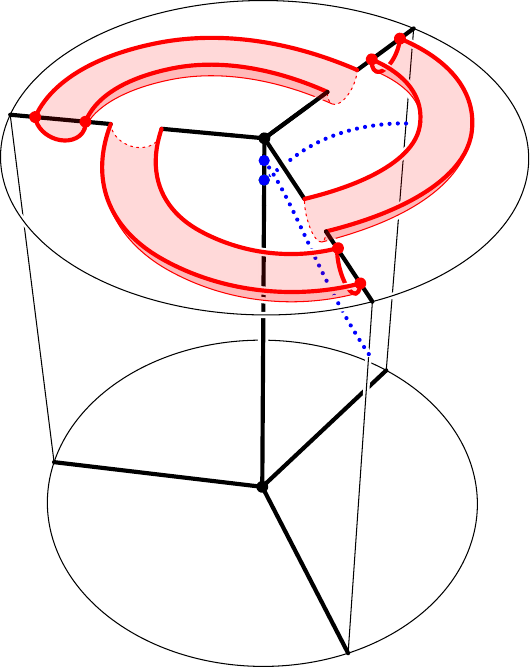}
\label{Fig:ImpossibleStaircasePairDelta}
}
\quad
\subfloat[Part of the foam $\calG_p$. A nature reserve edge protects $d$ and $d'$.]{
\includegraphics[width = 0.41\textwidth]{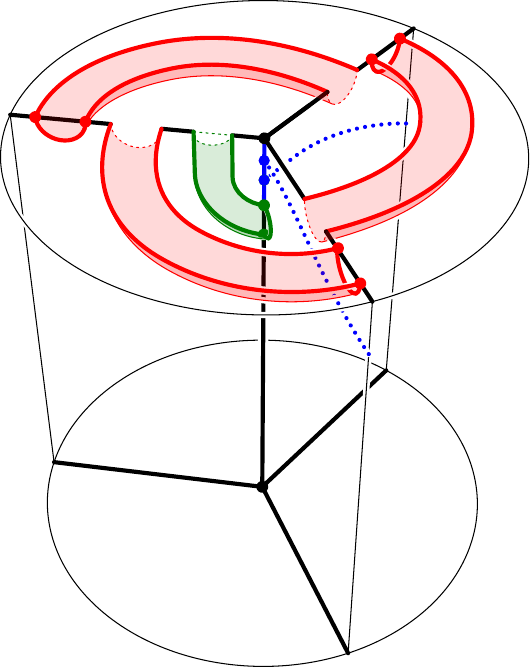}
\label{Fig:ImpossibleStaircasePairNatureReserve}
}

\subfloat[Close up of \reffig{ImpossibleStaircasePairDelta}.]{
\labellist
\hair 2pt \small
\pinlabel $p$ [tl] at 45 62
\pinlabel $\phidelta$ [r] at 124 80
\pinlabel $e$ [b] at 185 57
\pinlabel $\phidelta$ [l] at 84 117
\pinlabel $d$ [t] at 83 60
\pinlabel $d'$ [t] at 121 57
\endlabellist
\includegraphics[width = 0.41\textwidth]{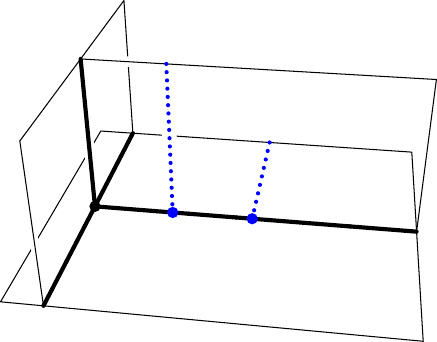}
\label{Fig:NatureReserve2PathNoRed} 
} 
\quad
\subfloat[Close up of \reffig{ImpossibleStaircasePairNatureReserve}.]{
\labellist
\hair 2pt \small
\pinlabel $e'$ [b] at 140 60
\pinlabel $e''$ [t] at 156 56
\pinlabel $e'''$ [b] at 186 57
\endlabellist
\includegraphics[width = 0.41\textwidth]{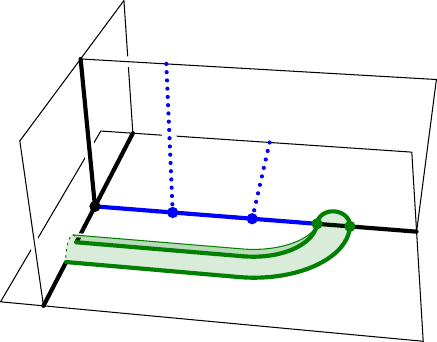}
\label{Fig:NatureReserve2PathRed}
}

\subfloat[\reffig{NatureReserve2PathNoRed} with the snakelet generated by the 0-2 move.
This is part of the foam {$\calG'[\delta]$}.]{
\labellist
\hair 2pt \small
\pinlabel $\epsilon_3$ [t] at 100 37
\endlabellist
\includegraphics[width = 0.41\textwidth]{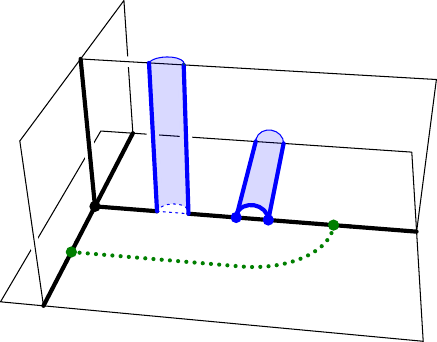}
\label{Fig:NatureReserve2GreenNoRed}
}
\quad
\subfloat[\reffig{NatureReserve2PathRed} with the snakelet generated by the 0-2 move. 
This is part of the foam {$\calG_p[\delta]$}.]{
\includegraphics[width = 0.41\textwidth]{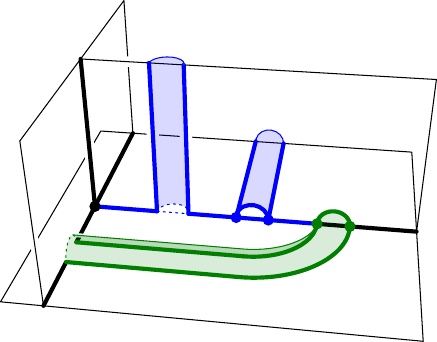}
\label{Fig:NatureReserve2GreenRed}
}
\caption{Introducing a nature reserve (the segment of $e$ drawn in blue) in the case that both ends of the arc $\phidelta$ lie on the same edge $e$.}
\label{Fig:NatureReserveHard}
\end{figure}

We do not yet have the tools to prove the following lemma.
We defer its proof to \refsec{BothEndpointsOneEdge}.

\begin{lemma}
\label{Lem:PairNatureReserveMoveDelta}
For either choice of $p$ the following holds.
Suppose that $\calG_p$ is the result of applying the pair nature reserve move at $p$ along $e$.
Then there is a sequence of 2-3 and 3-2 moves from $\calG[\delta]$ to $\calG_p[\delta]$ with all foams being $L$--essential.
\end{lemma}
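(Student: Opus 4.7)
The plan is to follow the template of \reflem{SingletonNatureReserveMoveDelta}, adapted to the pair configuration where both endpoints of $\phidelta$ lie on $e$. Starting from $\calG[\delta]$, I would first apply the augmented 2-3 construction of \refsec{Augmented2-3} within $\calN(p)$ to produce the pair-case analogue of $\calG'[\delta]$, and then perform one additional V-move at $p$ along the appropriate direction to build the green snakelet on a face $g$ adjacent to $e$, with the green snakelet's vertices lying on $e$ past both $d$ and $d'$.

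For the augmented 2-3 construction to begin, we need an $L$--flippable edge at $p$ in $\calG[\delta]$; the natural candidate is the initial segment $e_1$ of the subdivided edge $e$, running from $p$ to the nearer of $d,d'$. Granting $L$--flippability of $e_1$, the rest of the proof follows the singleton argument: \reflem{Augmented2-3LEssential} builds the three red snakelets in $\calN(p)$, and then \reflem{ImplementVMove} and \reflem{SnakeletVertex} build and slide the green snakelet into its final position past the blue snakelet from the 0-2 move. The argument at the other endpoint of $e$ is entirely symmetric, handling the alternative choice of $p$.

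The main obstacle is justifying $L$--flippability of $e_1$ for \emph{either} choice of $p$, which is the key feature distinguishing the pair case from the singleton case. Four of the five regions incident to $e_1$ are carried over unchanged from $\calG$ --- the axial region $P$ at $p$, together with the three slot regions along the interior of $e_1$ --- since the 0-2 move is supported away from $p$. The critical fifth region is the axial region at the new vertex $d_1$ for $e_1$. In the singleton case this axial region coincides with the newly created region $R$ inside the snakelet, and $L(R)$ may equal $L(P)$ or $L(Q)$ (but not both, since $L(P)\neq L(Q)$), which forces a specific choice of $p$. In the pair case, with both ends of $\phidelta$ on $e$, I expect the axial region at $d_1$ to be identified combinatorially with one of the \emph{original} slot regions of $e$ rather than with $R$. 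Combined with the $L$--flippability of $e$ in $\calG$, which forces the five regions around $e$ to carry pairwise distinct labels, this identification yields the required $L$--flippability of $e_1$.

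The hard part is thus carrying out this identification of the axial region at $d_1$, which requires inspecting the local combinatorics of the 0-2 move in the special configuration where $\phidelta$ returns to the same edge. After that, verifying $L$--essentiality at each intermediate 2-3 or 3-2 move is routine and parallels the step-by-step checks already used in the proof of \reflem{SingletonNatureReserveMoveDelta}.
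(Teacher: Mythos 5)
There is a genuine gap, and it sits exactly where you flag ``the hard part'': the claimed identification of the axial region at the new vertex $d_1$ with one of the original slot regions of $e$. That identification is only a coincidence in $M$, not in $\cover{M}$, and $L$--flippability is a statement about labels of regions of $\cover{M} - \cover{\calG}[\delta]$. Concretely, lift the picture: a lift $\cover{e}$ of $e$ carries lifts of both $d$ and $d'$, but the lift of $\phidelta$ ending at $\cover{d}$ has its other endpoint on a \emph{different} lift $e^{*}$ of $e$. The four regions at the new vertex over $d$ are the two regions incident to the lifted face, the outer region of $\cover{e}$, and the outer region of $e^{*}$ at the far end of the lifted arc; the last of these is the axial region of $e_1$ at $d_1$. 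Downstairs it projects to a region incident to $e$ (your ``slot region''), but upstairs it is an unrelated lift, and nothing forces its label to differ from $L(P)$, the label of the axial region at $p$. All other pairs of labels around $e_1$ are distinct automatically (from $L$--essentiality of $\calG$ at $p$ and of $\calG[\delta]$ at the new vertex), so $L$--flippability of $e_1$ reduces precisely to this one unconstrained comparison, which can fail. Worse, the analogous comparison at the other end of $e$ involves a \emph{different} region (the outer region at the far end of the other lift of $\phidelta$), so the singleton trick --- where the single problematic region $R$ cannot clash with both $L(P)$ and $L(Q)$, allowing a good choice of $p$ --- is unavailable: in the pair case both choices of $p$ can be blocked, and the lemma demands the conclusion for either choice. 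So the opening move of your construction (the first V-move/augmented 2-3 at $p$ inside $\calG[\delta]$) may simply not exist.

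This is why the paper does not imitate \reflem{SingletonNatureReserveMoveDelta} here. Instead it bootstraps: it first proves \refprop{Make0-2} under \refhyp{OneEndpoint} (endpoints of the arc on distinct edges), and then realises the passage from $\calG[\delta]$ to $\calG_p[\delta]$ as four further 0-2 moves along arcs $\epsilon_0, \ldots, \epsilon_3$ (the cores of the three red snakelets and the green snakelet of $\calG_p$), chosen and ordered so that each $\epsilon_j$ has its endpoints on distinct edges; each of these 0-2 moves is then converted into 2-3 and 3-2 moves by the already-proved special case, after checking that each intermediate foam $\calG_j$ is $L$--essential and has an $L$--flippable edge. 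If you want to salvage a direct argument you would need to supply, for at least the required choice of $p$, an $L$--flippable edge at $p$ in $\calG[\delta]$ without assuming the label coincidence above --- and the counterexample-shaped freedom in $L(X_1)$, $L(X_2)$ shows no such edge need exist among the edges incident to $p$.
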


\subsubsection{Properties of nature reserve moves}

We collect several useful properties of the singleton and pair nature reserve moves.

\begin{remark}
\label{Rem:Remainder}
The five regions meeting the closure of $e'''$ in either nature reserve move are the same as the five regions meeting the closure of $e$.
Also, one endpoint of $e'''$ lies on the head of the green snakelet while the other lies outside of $\calN(p)$ (because $e$ was not cyclic).
Thus $e'''$ is not cyclic.
Thus in both cases the remainder edge $e'''$ is $L$--flippable.
\end{remark}

The proof of the following lemma is very similar to the proof of \reflem{AugmentedOmnibus} and we omit it.

\begin{lemma}   
\label{Lem:NatureReserveOmnibus}
Suppose that $\calG$ is carried by $\calF$.
Suppose that $\calG$ is $L$--essential.
Suppose that $e$ is an $L$--flippable edge of $\calG$.
Fix an end $p$ of $e$.
Suppose that $\calG_p$ is the result of applying a (singleton or pair) nature reserve move at $p$ along $e$, producing a nature reserve edge $e'$.
Then we have the following.
\begin{enumerate}
\item
\label{Itm:NatureLEssential}   
The foam $\calG_p$ is $L$--essential.
\item
\label{Itm:NatureLFlippable}
The foam $\calG_p$ has an $L$--flippable edge.
\item
\label{Itm:NatureCyclicEdges}
No cyclic edges of $\calG_p$ meet $\calN(p)$. 
\item
\label{Itm:NatureCarried}
The foam $\calG_p$ is carried by $\calF$.
\item 
\label{Itm:NatureMaintainConnectivity}
Suppose that $w$ is a vertex of $\calF$.
Suppose that $\Nature$ is a collection of edges in $\calG^{(1)}$.
Suppose that $e$ is not in $\Nature$.
Let $\Nature_p$ be the collection of descendants of the edges of $\Nature$ under the singleton nature reserve move, union with $e'$.
Suppose that $\calG^{(1)} \cap \eta(w) - \Nature$  is connected.
Then $\calG_p^{(1)} \cap \eta(w) - \Nature_p$ is connected. \qed
\end{enumerate}
\end{lemma}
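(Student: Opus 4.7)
The plan is to prove each of the five items in parallel with the corresponding items in \reflem{AugmentedOmnibus}, exploiting the fact that both the singleton and pair nature reserve moves are supported in $\calN(p)$, a neighbourhood of the single vertex $p$. Item \refitm{NatureLEssential} is immediate from \reflem{SingletonNatureReserveMove} (or \reflem{PairNatureReserveMove}), which realise the move by a sequence of 2-3 and 3-2 moves through $L$-essential foams. Item \refitm{NatureLFlippable} follows immediately from \refrem{Remainder}: the remainder edge $e'''$ is $L$-flippable.

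For item \refitm{NatureCyclicEdges}, I would consult \reffig{NatureReserveRed} and \reffig{NatureReserve2PathRed}. The construction takes place inside $\calN(p)$ and introduces a small fixed number of new vertices, all of which are distinct. Inspection of the figures shows that every edge meeting $\calN(p)$ has distinct endpoints, so none is cyclic. For item \refitm{NatureCarried}, I would use the hypothesis from \refsec{NatureReserve} that $\calN(p) \subset \eta(C(p))$. A cell of $\calG_p$ disjoint from $\calN(p)$ coincides with a cell of $\calG$ and is carried as before; a cell meeting but not contained in $\calN(p)$ is strictly contained in an ancestor cell $c$ of $\calG$ and so carried by $C(c)$; a cell contained in $\calN(p)$ is carried by $C(p)$.

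For item \refitm{NatureMaintainConnectivity}, the argument mirrors that of \reflem{AugmentedOmnibus}\refitm{MaintainConnectivity}. The new collection $\Nature_p$ is the union of the descendants of $\Nature$ with the new nature reserve edge $e'$. Consulting \reffig{NatureReserveRed} and \reffig{NatureReserve2PathRed}, one checks that the intersection $\Nature_p \cap (\calG_p^{(1)} \cap \calN(p))$ consists only of $e'$ together with edge ends, and that $\calG_p^{(1)} \cap \calN(p) - \Nature_p$ is connected via the edges of the red and green snakelets. I would then split into the same five cases as in the augmented proof, depending on whether $\eta(w)$ is disjoint from, contains, or partially contains $\calN(p)$, and apply \reffac{GraphSwap} with $A = X - \calN(p)$, $B = \calG^{(1)} \cap \calN(p) - \Nature$, and $B' = \calG_p^{(1)} \cap \calN(p) - \Nature_p$.

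The main place to be careful is item \refitm{NatureMaintainConnectivity}, specifically verifying that removing $e'$ from the local 1-skeleton inside $\calN(p)$ does not disconnect it; this is a figure-inspection task and the key difference from the augmented case, where the flipped edge $e$ is not itself added to the nature reserve. Once that local connectivity is verified, the graph-swap argument carries over verbatim.
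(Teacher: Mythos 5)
Your proof is correct and takes essentially the same approach as the paper: the paper in fact omits this proof, saying only that it is very similar to that of \reflem{AugmentedOmnibus}, and your item-by-item adaptation (L-essentiality from Lemmas~\ref{Lem:SingletonNatureReserveMove} and~\ref{Lem:PairNatureReserveMove}, flippability of the remainder edge via \refrem{Remainder}, figure inspection for cyclic edges and carrying, and \reffac{GraphSwap} for connectivity) is exactly the intended argument. Your explicit flagging of the one genuine difference --- that $e'$ is added to $\Nature_p$, so one must check that its removal does not disconnect the local one-skeleton inside $\calN(p)$ --- is the right point to single out.
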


\section{Parallel sequences}
\label{Sec:ParallelSequences}

Suppose that $\calF$ is the given $L$--essential foam.
Suppose that $\phidelta$ is the given arc.
Performing a 0-2 move along $\phidelta$ produces the foam $\calF[\delta]$.
In this section, we give the proof of \refprop{Make0-2}.
That is, we give a sequence of 2-3 and 3-2 moves from $\calF$ to $\calF[\delta]$ with all foams being $L$--essential.

We deal with $\calF$ and $\calF[\delta]$ in parallel, working in from both ends of our eventual sequence of 2-3 and 3-2 moves.
That is, we produce a sequence $\calF = \calF_0, \calF_1, \ldots, \calF_K$, where $\calF_i$ and $\calF_{i+1}$ (respectively $\calF_i[\delta]$ and $\calF_{i+1}[\delta]$) are related by a sequence of 2-3 and 3-2 moves through $L$--essential foams.
Furthermore, $F_K$ and $F_K[\delta]$ are the input and output foams of \reflem{Do0-2ManyEdges}.

All foams $\calF_i$ are carried by $\calF$ with carrying function $C_i$, say.
To each foam $\calF_i$ we associate a list $\Visited_i$ of \emph{visited} vertices of $\calF$.
We also maintain a list $\Nature_i$ of zero, one, or two edges of $\calF_i$; these are the \emph{nature reserve edges}.
We abuse notation and refer to the descendant of $f$ in $\calF_i$ as $f$. 

We break the sequence of moves into two stages.
\begin{itemize}
\item
In the \emph{loosening stage}, taking us from $\calF_0$ to $\calF_J$, we
apply loosening moves along all edges of $\calF_0$, destroying any and all cyclic edges.
Furthermore, after the loosening stage the set $\Nature_J$ contains either one or two nature reserve edges, one for each edge containing an endpoint of $\phidelta$.
\item
In the \emph{contacting stage}, taking us from $\calF_J$ to $\calF_K$, we bring a ``distant'' region into contact with the face~$f$.
\end{itemize}

\subsection{Loosening move}
\label{Sec:LooseningMove}

\begin{definition}
Suppose that $\calF_i$, $C_i$, $\Visited_i$, and $\Nature_i$ are the given foam, with its carrying function, list of visited vertices, and nature reserve edges.
Suppose that $e_i$ is an oriented and $L$--flippable edge of $\calF_i$ which is not in $\Nature_i$.
Then the \emph{loosening move} along $e_i$ produces $\calF_{i+1}$,  $C_{i+1}$, $\Visited_{i+1}$, and $\Nature_{i+1}$, as follows.
\begin{itemize}
\item
Suppose that $e_i$ does not meet an endpoint of $\phidelta$.
Then we perform an augmented 2-3 move along $e_i$.
\item
Suppose instead that $e_i$ meets one or both endpoints of $\phidelta$.
First, we apply a nature reserve move (singleton or pair, as appropriate) to $\calF_i$ along $e_i$.
Let $e'''$ be the remainder edge produced by the nature reserve move. 
We orient $e'''$ in the same direction as $e_i$.
Second, we apply an augmented 2-3 move along the remainder edge $e'''$. 
(Note that the augmented 2-3 move along $e'''$ is possible because $e'''$ is $L$--flippable by \refrem{Remainder}.)
\end{itemize}

\noindent
The carrying function $C_{i+1}$ is given by \reflem{AugmentedOmnibus}\refitm{Carried} or \reflem{NatureReserveOmnibus}\refitm{NatureCarried}.

Let $v_i$ be the vertex of $\calF_i$ at the terminal point of $e_i$.
We set $\Visited_{i+1} = \Visited_i \cup \{C_i(v_i)\}$.

\begin{itemize}
\item Suppose that $e_i$ does not meet an endpoint of $\phidelta$.
Since $e_i$ is not in $\Nature_i$, every edge of $\Nature_i$ has a unique descendant under the augmented 2-3 move.
We place this unique descendant in $\Nature_{i+1}$.

\item Suppose that $e_i$ meets one or both endpoints of $\phidelta$.
Since $e_i$ is not in $\Nature_i$, every edge of $\Nature_i$ has a unique descendant under both the nature reserve move and the augmented 2-3 move.
We place this unique descendant in $\Nature_{i+1}$, together with the new nature reserve edge $e'$. \qedhere
\end{itemize}
\end{definition}

To either $\calF_i$ or $\calF_{i+1}$ we may apply the 0-2 move along $\phidelta$ to obtain $\calF_i[\delta]$ or $\calF_{i+1}[\delta]$ respectively.
We record this information in \refdia{CommutingDiagram}, together with a dashed arrow connecting $\calF_i[\delta]$ to $\calF_{i+1}[\delta]$.
Depending on the number of endpoints of $\phidelta$ on $e_i$, a subcollection of Lemmas \ref{Lem:Augmented2-3LEssential}, \ref{Lem:SingletonNatureReserveMove}, \ref{Lem:SingletonNatureReserveMoveDelta},
\ref{Lem:PairNatureReserveMove}, and
\ref{Lem:PairNatureReserveMoveDelta} proves the following.

\begin{corollary}
\label{Cor:CommutingDiagram}
For each horizontal arrow in \refdia{CommutingDiagram} there is a sequence of 2-3 and 3-2 moves with all foams being $L$--essential. \qed
\end{corollary}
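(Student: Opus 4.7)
The plan is a three-case analysis organised by the number of endpoints (zero, one, or two) of the arc $\phidelta$ that lie on $e_i$. In each case we have four horizontal arrows to realise: the two top arrows $\calF_i \to \calF_{i+1}$ and $\calF_i[\delta] \to \calF_{i+1}[\delta]$ coming from the loosening move, and no additional content from the dashed vertical arrows (which just record application of the 0-2 move along $\phidelta$). In every case, each horizontal arrow will be built out of the appropriate nature reserve lemma (when applicable) followed by one application of \reflem{Augmented2-3LEssential}.

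First, suppose $e_i$ meets no endpoint of $\phidelta$. Then the loosening move along $e_i$ is an augmented 2-3 move, which by \reflem{Augmented2-3LEssential} can be realised by 2-3 and 3-2 moves through $L$--essential foams, giving the top arrow. For the bottom arrow, since $e_i$ is disjoint from $\phidelta$ and the augmented 2-3 move is supported in a small regular neighbourhood $\calN(e_i)$ (chosen to lie in the appropriate handle by \refdef{Augmented2-3}\refitm{Aug2-3Carried}), we may choose $\calN(e_i)$ to be disjoint from $\phidelta$. The same sequence of 2-3 and 3-2 moves from \reflem{Augmented2-3LEssential} applies verbatim inside $\calF_i[\delta]$, each intermediate foam differing from the corresponding one upstairs only by the presence of the snakelet generated by $\phidelta$; $L$--essentiality is inherited.

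Second, suppose $e_i$ meets exactly one endpoint of $\phidelta$. The loosening move is a singleton nature reserve move at $p$ along $e_i$, followed by an augmented 2-3 move along the remainder edge $e'''$. For the top arrow, \reflem{SingletonNatureReserveMove} gives the first half and \reflem{Augmented2-3LEssential} gives the second half (applicable because $e'''$ is $L$--flippable by \refrem{Remainder}). For the bottom arrow, we use \reflem{SingletonNatureReserveMoveDelta} in place of \reflem{SingletonNatureReserveMove}, then \reflem{Augmented2-3LEssential} again; as in the first case, the augmented 2-3 move on $e'''$ is supported in a small neighbourhood disjoint from the snakelet generated by $\phidelta$ (this disjointness is guaranteed by the fact that the nature reserve edge $e'$ separates $d$ from $e'''$ inside $\calN(p)$), so the same sequence works downstairs.

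Third, suppose $e_i$ meets both endpoints of $\phidelta$. The argument is identical to the previous case, with the pair nature reserve move replacing the singleton nature reserve move: for the top arrow we apply \reflem{PairNatureReserveMove} and then \reflem{Augmented2-3LEssential}; for the bottom arrow we apply \reflem{PairNatureReserveMoveDelta} and then \reflem{Augmented2-3LEssential}. The only subtlety, and the one spot where the argument is not a pure concatenation, is verifying that in the bottom rows the augmented 2-3 move on the remainder edge $e'''$ really can be chosen to act disjointly from the snakelet along $\phidelta$. This reduces to checking that the nature reserve edge $e'$, together with the green snakelet introduced by the nature reserve move, separates the head of the 0-2 snakelet from $e'''$ inside $\calN(p)$; this is visible in Figures~\ref{Fig:NatureReserveGreenRed} and~\ref{Fig:NatureReserve2GreenRed}, and is precisely the role of the nature reserve construction. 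Granting this separation, the sequences for the upstairs and downstairs arrows are identical moves performed in slightly different ambient foams, and both run through $L$--essential foams. This exhausts the cases and completes the proof.
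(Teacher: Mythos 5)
Your proposal is correct and takes essentially the same route as the paper: the paper's proof is precisely this case analysis on the number of endpoints of $\phidelta$ lying on $e_i$, realising the top and bottom arrows by the appropriate subcollection of Lemmas~\ref{Lem:Augmented2-3LEssential}, \ref{Lem:SingletonNatureReserveMove}, \ref{Lem:SingletonNatureReserveMoveDelta}, \ref{Lem:PairNatureReserveMove}, and~\ref{Lem:PairNatureReserveMoveDelta}, with the augmented 2-3 move along the remainder edge commuting with the 0-2 move along $\phidelta$ because their supports are disjoint. The only small point left implicit (in your write-up as in the paper's) is that in the one-endpoint case the vertex $p$ used by the loosening move must be the particular endpoint supplied by Lemma~\ref{Lem:SingletonNatureReserveMoveDelta}, since that lemma guarantees the bottom arrow only for that choice of $p$.
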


\begin{equation}
\label{Dia:CommutingDiagram}
    \begin{tikzcd}
        \calF_i \arrow[maps to]{r} \arrow[maps to, "\mbox{\scriptsize0-2}"]{d} & \calF_{i+1} \arrow[maps to, "\mbox{\scriptsize0-2}"]{d}  \\
        \calF_i[\delta] \arrow[dashed, maps to]{r}  & \calF_{i+1}[\delta]
    \end{tikzcd}
\end{equation}

Thus the loosening move commutes with the 0-2 move along $\phidelta$.
Note that we have not yet established \reflem{PairNatureReserveMoveDelta} (the case in which $\phidelta$ has both endpoints on $e_i$).
The proof of \reflem{PairNatureReserveMoveDelta} in fact requires \refcor{CommutingDiagram} in the case in which $\phidelta$ has zero or one endpoint on $e_i$.

\subsection{Loosening stage}
\label{Sec:LooseningStage}

From here and until \refsec{BothEndpointsOneEdge}, we will assume the following. 

\begin{hypothesis}
\label{Hyp:OneEndpoint}
The endpoints of $\phidelta$ lie on distinct edges of $\calF$.
\end{hypothesis}

\noindent
(Under this hypothesis, as discussed in \refsec{NatureReserve}, we do not use the pair nature reserve move.
\refhyp{OneEndpoint} is used only when a loosening move is performed along an edge containing an endpoint of $\phidelta$.)
We now recursively choose the sequence of edges along which we apply loosening moves.

We first deal with the base case.
We set $\calF_0 = \calF$, and we take both $\Visited_0$ and $\Nature_0$ to be empty.
We now choose $e_0$ to be any $L$--flippable edge of $\calF_0$.
Such an edge exists by the hypotheses of \refprop{Make0-2}.
We arbitrarily orient $e_0$.
Let $u_0$ and $v_0$ be the initial and terminal vertices of $e_0$.
We apply a loosening move along $e_0$.
This takes us from $\calF_0$ to $\calF_1$.
The set of visited vertices becomes $\Visited_1 = \{v_0\}$.
If $e_0$ does not meet an endpoint of $\phidelta$ then $\Nature_1 = \Nature_0$ is empty.
If $e_0$ contains an endpoint of $\phidelta$ then $\Nature_1 = \{ e'_0 \}$, where $e'_0$ is the nature reserve edge generated by the nature reserve move.

Our next step is to choose $e_1$ to be one of the three edges of $\calF_1$ that intersect $\eta(e_0)$.
One endpoint of $e_1$ lies in $\eta(u_0)$ while the other lies in $\eta(v_0)$.
We orient $e_1$ from the latter to the former; we now apply a loosening move (necessarily an augmented 2-3 move) along $e_1$.
This takes us from $\calF_1$ to $\calF_2$, and $\Visited_2 = \{u_0, v_0\}$.
Here it is not possible that $e_1$ contains an endpoint of $\phidelta$ so we set $\Nature_2 = \Nature_1$.

We now deal with the inductive step.
We are given a foam $\calF_i$ with its list of visited vertices $\Visited_i$ and nature reserve edges $\Nature_i$.
The induction hypothesis tells us that $\calF_i$ is $L$--essential.
There are now two possibilities.
Either there is an edge of $\calF_i$ which is a descendant of an edge of $\calF_0$, or not.
In the latter case we set $J=i$ and we are done with the loosening stage.

Suppose instead that we have such an edge.
We choose this edge to be $e_i$.
(Thus $C_i(e_i)$ is an edge rather than a vertex.) 
Since the one-skeleton of $\calF$ is connected, we may assume (possibly by choosing a different such edge $e_i$) that $e_i$ has at least one endpoint, $u_i$ say, in a zero-handle $\eta(C_i(u_i))$, where $C_i(u_i) \in \Visited_i$.
We will now grow a nearby region so that $e_i$ becomes $L$--flippable.
We will then perform a loosening move along $e_i$.

Set $w = C_i(u_i)$.
Pick a lift $\cover{w}$ of $w$; let $\eta(\cover{w})$ be the corresponding lift of $\eta(w)$.

\begin{claim}
\label{Clm:FiveRegions}
The zero-handle $\eta(\cover{w})$ intersects regions of $\cover{M} - \cover{\calF}_i$ having at least five labels. 
Furthermore, for all vertices $u$ in $\calF_i \cap \eta(w)$, we have that $u$ is not incident to a cyclic edge in $\calF_i$.
\end{claim}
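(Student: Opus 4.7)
My plan is to verify both statements by tracing back to the step $j$ at which $w$ was first visited. Let $j$ be the smallest index with $w = C_j(v_j)$; at step $j$ a loosening move flipped some $L$-flippable edge $\tilde{e}$ (either $e_j$ itself, or the remainder edge $e'''$ produced by a preceding nature reserve move), with terminal endpoint $v_j \in \eta(w)$.

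For the five-labels assertion, the carrying condition \refdef{Augmented2-3}\refitm{Aug2-3Carried} and its nature reserve analogue force all cells generated by the underlying 2-3 flip to lie in $\calN(v_j) \subset \eta(w)$. The closure of $\tilde{e}$ is incident to exactly five complementary regions (three along its interior and two at its endpoints), and I will argue that these carry pairwise distinct labels: the three interior regions are pairwise distinct because each pair is separated by an $L$-essential face of $\calF_j$; each endpoint region differs from each interior region because the four regions incident to any foam vertex carry four distinct labels (every pair separated by an $L$-essential face of the foam, equivalently by an $L$-essential edge of the dual tetrahedron); and the two endpoint regions are distinctly labelled by $L$-flippability of $\tilde{e}$. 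After the flip, all five regions remain in contact with the new cells in $\calN(v_j)$. Selecting $\cover{w}$ to be the lift of $w$ that contains the particular lift of $v_j$ where the move was performed, $\pi_1(M)$-equivariance of $L$ translates this into five pairwise distinct labels in $\calL$ visible from $\eta(\cover{w})$ in $\cover{\calF}_{j+1}$.

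To propagate this to $\calF_i$, I will use that complementary regions and their labels are global data (attached to boundary components of $\cover{M}$) that no 2-3 or 3-2 move alters. Each subsequent loosening move at step $k > j$ is supported within $\overline{\eta}(C_k(e_k))$, and by the disjointness of zero-handles of distinct vertices of $\calF$, the portion of this support meeting $\eta(w)$ lies entirely inside $\eta(w)$ itself. Since the five regions each occupy positive volume and are incident to $2$-cells of $\calF_{j+1}$ in $\calN(v_j)$ whose descendants continue to meet $\eta(\cover{w})$ throughout subsequent local reshufflings, the five-label count persists up to step $i$. For the cyclic-edge assertion, every vertex $u \in \calF_i \cap \eta(w)$ satisfies $C_i(u) = w$ by the carrying property; then \reflem{AugmentedOmnibus}\refitm{CyclicEdges} and \reflem{NatureReserveOmnibus}\refitm{NatureCyclicEdges} applied at step $j$ (to rule out cyclic edges meeting $\calN(e_j) \supset \calN(v_j)$) and at each subsequent step $k > j$ whose move-support meets $\eta(w)$, together with the disjointness of zero-handles of $\calF$, show that $u$ is never incident to a cyclic edge.

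The main technical obstacle is precisely this book-keeping: controlling how later loosening moves affect the cells and regions inside $\eta(w)$, both for preserving the five-label count and for ruling out newly-introduced cyclic edges. The disjointness of zero-handles of $\calF$, combined with the carrying constraints \refdef{Augmented2-3}\refitm{Aug2-3Carried} and its nature reserve analogue, are the essential tools that confine any such disturbance to $\eta(w)$ and bring it under the purview of the cyclic-edge and label-preservation properties established in \reflem{AugmentedOmnibus} and \reflem{NatureReserveOmnibus}.
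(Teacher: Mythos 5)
Your proposal is correct and follows essentially the same route as the paper: identify the loosening move that first placed $w$ in the visited list, observe that its augmented 2-3 (or nature reserve plus augmented 2-3) move brings a fifth region into $\eta(\cover{w})$ with all five labels pairwise distinct by $L$--essentiality together with $L$--flippability of the flipped edge, and invoke \reflem{AugmentedOmnibus}\refitm{CyclicEdges} and \reflem{NatureReserveOmnibus}\refitm{NatureCyclicEdges} for the cyclic-edge statement. The extra persistence and carrying book-keeping you supply is detail the paper leaves implicit, not a different argument.
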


\begin{proof}
The loosening move that initially added $w$ to the visited list $\Visited_j$ ($j \leq i$) added a fifth region to $\eta(\cover{w}) - \cover{\calF}_j$.
See \reffig{Augmented2-3Final}.
These regions have distinct labels because $\calF_j$ is $L$--essential and the regions are pairwise adjacent.
The second statement follows by \reflem{AugmentedOmnibus}\refitm{CyclicEdges} and \reflem{NatureReserveOmnibus}\refitm{NatureCyclicEdges}.
\end{proof}

Thus $e_i$ is not cyclic.
Let $v_i$ be the other endpoint of $e_i$.
(It is possible that $C_i(u_i) = C_i(v_i)$.)
Let $\cover{e_i}$ be a lift of $e_i$ meeting $\eta(\cover{w})$.
Let $\cover{u_i}$ and $\cover{v_i}$ be the corresponding lifts of $u_i$ and $v_i$.

\subsection{Tunnelling through a zero-handle}
\label{Sec:Tunnelling}

Since $w = C_i(u_i)$ lies in $\Visited_i$, the above \refclm{FiveRegions} tells us that the zero-handle $\eta(C_i(\cover{u_i}))$ of $\cover{\calF}$ intersects regions having at least five labels. 
In particular, it contains a region $E$ with label different from the labels of the four regions incident to $\cover{v_i}$.
By \reflem{AugmentedOmnibus}\refitm{MaintainConnectivity}, \reflem{NatureReserveOmnibus}\refitm{NatureMaintainConnectivity}, and induction, there is a path in the one-skeleton of $\cover{\calF}_i \cap \eta(C_i(\cover{u_i}))$ from $E$ to $\cover{u_i}$ that does not pass through any edge of $\Nature_i$.
Take a minimal such path $\gamma$ and orient it towards $\cover{u_i}$.
Orient $e_i$ from $u_i$ to $v_i$ and form $\gamma'$ by adding $\cover{e}_i$ to the end of $\gamma$.
Note that the edges containing endpoints of $\phidelta$ are either in $\Nature_i$ or they are descendants of edges of $\calF$, in which case they are carried by edges of $\calF$.
In either case, no endpoint of $\phidelta$ lies on $\gamma$.

We repeatedly apply loosening moves along $\gamma'$, creating foams $\calF_{i+1}$, $\calF_{i+2}$, and so on, until we perform a loosening move along (a descendant of) $e_i$.
Strictly speaking, this requires another recursive construction with the following hypotheses.
\begin{itemize}
\item Each remaining edge of $\gamma'$ has a descendant in each foam ($\gamma$ lies in the ball $\eta(C_i(\cover{u}))$ and by the choice of $e_i$).
\item The edges of $\gamma'$ are not cyclic (Lemmas~\ref{Lem:AugmentedOmnibus}\refitm{CyclicEdges} and~\ref{Lem:NatureReserveOmnibus}\refitm{NatureCyclicEdges}).
\item The label $L(E)$ appears as exactly one of the five labels incident to each edge (by the minimality of $\gamma$ and the definition of the loosening move).
\end{itemize}

This completes the recursive step and so reduces the number of edges which are descendants of edges in $\calF$.

When the loosening stage completes, the resulting foam $\calF_J$ contains (by \refhyp{OneEndpoint}) two nature reserve edges.
Furthermore, no edge of $\calF_J$ is cyclic.
This follows since the induction halts when no edge of $\calF_J$ is a descendant of an edge of $\calF_0$.
Thus every such edge at some point is destroyed by either an augmented 2-3 move or a nature reserve move. 
By \reflem{AugmentedOmnibus}\refitm{CyclicEdges} and \reflem{NatureReserveOmnibus}\refitm{NatureCyclicEdges}, these moves destroy all cyclic edges.

\subsection{Contacting stage}
\label{Sec:BringDistant}
The face $f$ (that is, its descendants) survives all loosening moves.
The same holds for the arc $\delta$.
Let $s$ and $s'$ be the two sides of $\delta$ (see \refdef{Sides}).
Consider the set of complementary regions in $\cover{\calF}_J$ that are incident to $f$, unioned with the sets of complementary regions in $\cover{\calF}_J[\delta]$ that are incident to $f_s$ and $f_{s'}$ (see \refdef{FaceFromSide}).
The resulting union is finite.
Since $L$ has infinite image, there is a label $\ell$ different from the labels of all regions in that union.
We choose, in the one-skeleton of $\cover{\calF}_J$ minus $\cover\Nature_J$, a minimal path $\gamma$ connecting a vertex of a region $E$, with $L(E) = \ell$, to a vertex of $f$. 
Our goal is to bring $E$ into contact with $f$.
To do this we extend our sequence of foams by performing further augmented 2-3 moves.

Fix a handle structure $\eta'$ for $\calF_J$.
The foams $\calF_{J+1}, \ldots, \calF_K$ will be carried by $\calF_J$.
Let $v_0, v_{1}, \ldots, v_m$ be the vertices of $\gamma$ in $\cover{\calF}_J$.
Let $e_1, e_{2}, \ldots, e_{m}$ be the edges of $\gamma$ in $\cover{\calF}_J$.
We begin by performing an augmented 2-3 move along $\phi_M(e_1)$, from $\phi_M(v_0)$ to $\phi_M(v_{1})$.
The edge $e_1$ is $L$--flippable by the minimality of $\gamma$ and because in the loosening stage we destroyed all cyclic edges. 

We now proceed recursively. 
Assume that the previous augmented 2-3 move, producing the foam $\calG$ (one of the $\calF_i$), was the first to introduce the region $E$ (and the label $L(E)$) to the zero-handle $\eta'(v_j)$.
We now tunnel through $\eta'(v_j)$ in a process very similar to that described in \refsec{Tunnelling}.
Here are some of the details.
Let $\beta_j$ be a minimal path in the one-skeleton of $\eta'(v_j) \cap \cover{\calG}$ connecting a vertex incident to $E$ with an endpoint of an edge $e'_{j+1}$ that meets $\eta'(e_{j+1})$.
Form $\beta'_j$ by adding $e'_{j+1}$ to the end of $\beta_j$.
We perform augmented 2-3 moves along (the image under the covering map $\phi$ of) each edge of $\beta'_j$ in turn.
The last augmented 2-3 move in our sequence, along $\phi_M(e'_{j+1})$, is the first to introduce the region $E$ (and the label $L(E)$) to the zero-handle $\eta'(v_{j+1})$.

This completes the recursive step.
Once we have processed all edges of $\gamma$, we have produced a foam $\calH$.
The last augmented 2-3 move introduced, for the first time, the region $E$ (and the label $L(E)$) to the zero-handle $\eta'(v_m)$, which also contains at least one vertex of $f$.
Let $\beta_m$ be a minimal path in $\eta'(v_m) \cap \calH$ connecting a vertex incident to $E$ with $f$.
We perform augmented 2-3 moves along each edge of $\beta_m$ in turn.
Again this process is very similar to that described in \refsec{Tunnelling}.

\subsection{Unique contact}
\label{Sec:UniqueContact}

Having processed through all of $\beta_m$ we reach a foam in which the region $E$ is now in contact with $f$.
There are three possible positions for $f$ in \reffig{Augmented2-3Final}.
These are the three sectors at the bottom of the figure.
Suppose that more than one of these is $f$.
In this case, before performing the last augmented 2-3 move we perform some extra V-moves to block off all but one of the sectors from $E$.
That is, we perform V-moves at the terminal end of the edge and then perform the augmented 2-3 move to produce the final foam $\calF_K$.
\reffig{Augmented2-3ProtectE} shows the result when all three sectors are $f$.
The effect is that the region $E$ now meets $f$ along precisely one edge.
We denote this edge by $e_E$.

\begin{figure}[htbp]
\labellist
\hair 2pt \small
\pinlabel $E$ at 127 255
\pinlabel $f$ at 68 125
\pinlabel $e_E$ [t] at 118 105
\endlabellist
\includegraphics[width = 0.53\textwidth]{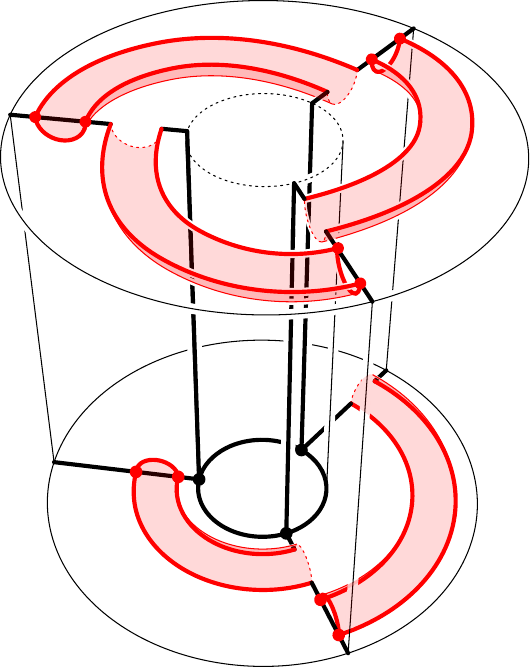}
\caption{We apply extra V-moves to limit the number of edges where $E$ meets $f$ to one, namely the edge $e_E$.}
\label{Fig:Augmented2-3ProtectE}
\end{figure}

\subsection{Verifying hypotheses}

Following the loosening and contacting stages builds us a sequence of 2-3 and 3-2 moves taking us from $\calF_0$ to $\calF_K$ via $L$--essential foams.
Applying \refcor{CommutingDiagram} at each step builds us a sequence of 2-3 and 3-2 moves taking us from $\calF_0[\delta]$ to $\calF_K[\delta]$ via $L$--essential foams.
What remains is to check that we can apply \reflem{Do0-2ManyEdges} to connect $\calF_K$ to $\calF_K[\delta]$.

\begin{lemma}
The foam $\calF_K$ satisfies the hypotheses of \reflem{Do0-2ManyEdges}, taking $E_k$ to be $E$.
\end{lemma}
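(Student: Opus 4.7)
The plan is to check, clause by clause, each hypothesis in \reflem{Do0-2ManyEdges} (and in the ambient \refhyp{MovingSnakelets}) for $\calF_K$, with $k$ chosen so that $e_k$ is the edge $e_E$ produced in \refsec{UniqueContact}. This edge lies on one of the two sides of $\phidelta$ in $\bdy f$; call that side $s$. First I would verify $L$-essentiality of $\calF_K$ and $\calF_K[\delta]$ by inducting along the sequence $\calF_0, \ldots, \calF_K$ via \refcor{CommutingDiagram}. The face $f$ and arc $\phidelta$ persist throughout, because the two edges of $\calF$ carrying the endpoints of $\phidelta$ have been replaced by nature reserve edges at the end of the loosening stage (by \refhyp{OneEndpoint} they lie in $\Nature_J$), and no augmented 2-3 move during either stage is ever applied along a nature reserve edge.

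Next I would identify $k$ and verify the combinatorial hypotheses. The side $s$ of $\calF$ already met at least two model edges of $\bdy f$ (else $\calF[\delta]$ would have an $L$-inessential face), and the final augmented 2-3 move of the contacting stage inserts $e_E = e_k$ into $s$; since the nature reserves block any move from disturbing the extremal edges $e_0$ and $e_N$ of $s$, the edge $e_E$ lies strictly between them, giving $N \geq 3$ with $0 < k < N$ so that $e_{k-1}, e_k, e_{k+1}$ are well-defined. Both label-count hypotheses reduce to the defining property of $\ell = L(E)$ chosen at the start of \refsec{BringDistant}: $\ell$ was picked to avoid every label of an outer region at $f$ in $\calF_J$ and at $f_s, f_{s'}$ in $\calF_J[\delta]$. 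Because every subsequent augmented 2-3 move is carried by $\calF_J$ (\reflem{AugmentedOmnibus}\refitm{Carried}), no other edge of $s$ or $f_s$ can acquire an outer region with label $\ell$, while the V-moves of \refsec{UniqueContact} guarantee that $E$ itself contributes exactly one incidence, along $e_E$, on each of $s$ and $f_s$.

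Finally, the non-cyclicity of $e_{k-1}, e_k, e_{k+1}$ follows from \reflem{AugmentedOmnibus}\refitm{CyclicEdges} applied to the final augmented 2-3 move (whose support contains those three edges with distinct endpoints) together with the observation at the end of \refsec{LooseningStage} that $\calF_J$ has no cyclic edges and that no contacting-stage move introduces any. For $\phi_M(e_k) \neq \phi_M(e_i)$ with $i \neq k$ on $s$: the multiset of labels of outer regions along $e_k$ contains $\ell$, whereas those along the other $e_i$ do not, and deck transformations preserve labels by $\pi_1(M)$-equivariance of $L$. The step I expect to be the most delicate is the uniqueness claim of \refsec{UniqueContact}: one must confirm that the extra V-moves really do confine $E$ to meeting $f$ along the single edge $e_E$, so that $k$ is unambiguously defined and the ``precisely once'' label counts are exact rather than merely lower bounds; this is verified by direct inspection of \reffig{Augmented2-3ProtectE}.
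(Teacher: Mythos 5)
Your overall outline (checking the hypotheses of \reflem{Do0-2ManyEdges} one by one, with $N\geq 3$ coming from the two nature reserve edges flanking $e_E$, and non-cyclicity coming from the loosening stage plus the fact that augmented 2-3 moves and the extra V-moves create no cyclic edges) matches the paper. However, your justification of the two ``precisely once'' hypotheses --- the heart of the lemma --- has a genuine gap. You claim that because every move after $\calF_J$ is carried by $\calF_J$ (\reflem{AugmentedOmnibus}\refitm{Carried}), ``no other edge of $s$ or $f_s$ can acquire an outer region with label $\ell$.'' Carrying constrains only the zero-, one-, and two-dimensional cells, not the complementary regions; the paper explicitly remarks this, and indeed the whole purpose of the contacting stage is to grow the region $E$ far beyond its original handles, so carriedness alone cannot prevent an $\ell$-labelled region from reaching $f$ along several edges. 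What the paper actually uses is carriedness of $f$ \emph{together with} the minimality of the tunnelling paths $\gamma$ and $\beta_j$ (so that no move touches $f$, and no region labelled $\ell$ becomes incident to $f$, before the final move), plus the V-moves of \refsec{UniqueContact} for the count of exactly one. Moreover, for $f_s$ you never address the outer regions of $f_s$ that are \emph{not} outer regions of $f$: by \refrem{FaceFromSide}, $f_s$ can run along the snakelet, and the paper handles these extra regions by observing that they meet $f_s$ along edges contained in the nature reserve edges, which survive untouched from $\calF_J$ onward, so they are descendants of regions incident to $f_s$ in $\cover{\calF}_J[\delta]$, whose labels were excluded when $\ell$ was chosen. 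Your proposal is silent on this.

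Your argument for $\phi_M(e_k)\neq\phi_M(e_i)$ also differs from the paper's and is shaky as stated: a deck transformation taking $e_k$ to $e_i$ need not take the outer region of $e_k$ (which is defined relative to $f$) to the outer region of $e_i$, since it need not preserve $f$; to run a label-comparison argument you must also rule out $\ell$ among the side regions of $f$ and treat the degenerate case in which two model edges of $f$ map to the same edge of $\cover{\calF}_K$, where no nontrivial deck transformation is involved. The paper avoids all of this with a structural observation: the other two faces incident to $\phi_M(e_E)$ were created by the final augmented 2-3 move, hence have no ancestors and cannot equal $\phi_M(f)$. I recommend adopting that argument, and repairing the label-count step with the minimality-of-paths and nature-reserve-edge reasoning described above.
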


\begin{proof}
Recall that $s$ is the side of $\delta$ containing $e_E$.
Thus $\phi_M(s)$ contains $\phi_M(e_E)$ and the two nature reserve edges containing the ends of $\phidelta$.
Since $\phi_M(e_E)$ is not a nature reserve edge there are at least three edges on the side $s$.

By \reflem{AugmentedOmnibus}\refitm{Carried} and \reflem{NatureReserveOmnibus}\refitm{NatureCarried}, 
the face $f$ is always carried by its ancestor in $\cover\calF_J$.
This and the minimality of $\gamma$ (\refsec{BringDistant}) ensure that $f$ does not become incident to $E$ (or any other region with the same label as $E$) before the last move of our sequence.
Thus the label $L(E)$ appears exactly once as the label of an outer region for $f$.

Suppose that $D$ is an outer region for the face $f_s$ but not an outer region for $f$. 
Let $e_D$ be the edge of $f_s$ in $\cover{\calF}_K[\delta]$ along which $D$ meets $f_s$.
Thus $\phi_M(e_D)$ is contained in a nature reserve edge $e'$ in $\calF_K$.
(See Figures~\ref{Fig:NatureReserveGreenRed} and~\ref{Fig:ThreeEdges9b} as well as \refrem{FaceFromSide}.)
Since $e'$ is a nature reserve edge, augmented 2-3 moves taking us from $\calF_J$ to $\calF_K$ do not destroy it.
Thus $D$ is (a descendant of) one of the regions incident to the face $f_s$ as it sat in $\cover{\calF}_J[\delta]$.
Our choice of $E$ (made at the start of \refsec{BringDistant}) then implies that $L(D) \neq L(E)$. 
Thus the label $L(E)$ appears exactly once as the label of an outer region for $f_s$.

The edge $e_E$ and the two adjacent edges around $f$ are not cyclic edges since we destroyed all cyclic edges in the loosening step and augmented 2-3 moves do not introduce them. 
(Also, the extra V-moves of \refsec{UniqueContact} do not introduce any cyclic edges.)
Finally, $\phi_M(e_E)$ is distinct from all other edges of $\phi_M(\bdy f)$ because the other two faces of $\calF_K$ incident to $\phi_M(e_E)$ are not equal to $\phi_M(f)$.
This is because these two faces were created in the final augmented 2-3 move; thus they have no ancestors.
\end{proof}

We now apply \reflem{Do0-2ManyEdges} to $\calF_K$.
This gives the desired sequence of moves connecting $\calF_K$ to $\calF_K[\delta]$.

\begin{remark}
\label{Rem:ProvedDistinctEdges}
This completes the proof of \refprop{Make0-2} assuming \refhyp{OneEndpoint}:
that is, that the endpoints of $\phidelta$ lie on distinct edges of $\calF$.
\end{remark}

\subsection{Both endpoints of $\phidelta$ lie on one edge}
\label{Sec:BothEndpointsOneEdge}

We now replace \refhyp{OneEndpoint} by the following.

\begin{hypothesis}
\label{Hyp:TwoEndpoints}
The endpoints of $\phidelta$ lie on the same edge of $\calF$.
\end{hypothesis}

The proof of \refprop{Make0-2} proceeds as before except that when a loosening move is applied along the edge containing both endpoints of $\phidelta$
in the loosening stage we use the pair nature reserve move.
The only piece of the proof remaining is the following.

\begin{proof}[Proof of \reflem{PairNatureReserveMoveDelta}]

We now describe moves to get from $\calG[\delta]$ to $\calG_p[\delta]$ with all foams being $L$--essential.
Here we do not use V-moves to create the three red snakelets and the green snakelet (as we did in the proof of \reflem{SingletonNatureReserveMoveDelta}). 
Instead we apply the special case of \refprop{Make0-2} that we proved under \refhyp{OneEndpoint}; see \refrem{ProvedDistinctEdges}.
That is, we produce a sequence of five $L$--essential foams 
\[
\calG[\delta] = \calG_0,  \quad \calG_1,  \quad \calG_2,  \quad \calG_3 = \calG'[\delta], \quad \calG_4 =  \calG_p[\delta]
\]
with the following properties.
\begin{itemize}
\item
$\calG'$ is as in \reffig{ImpossibleStaircasePairDelta}.
\item
Each foam has an $L$--flippable edge.
\item
$\calG_j[\epsilon_j] = \calG_{j+1}$. 
That is, $\calG_{j+1}$ is obtained from $\calG_j$ by performing a 0-2 move along an arc $\epsilon_j$.
\item
The endpoints of $\epsilon_j$ lie on distinct edges of $\calG_j$.
(This ensures that \refhyp{OneEndpoint} is satisfied.)
\end{itemize}

See \reffig{ImpossibleStaircasePairNatureReserve}.
The arcs $\epsilon_0$ through $\epsilon_2$ are the cores of the red snakelets of $\calG_p$, while $\epsilon_3$ is the core of the green snakelet.
Let $a$, $b$, and $c$ be the three edge-ends in $\calG$ incident to $p$ that are not part of $e$.
We assume that $a$ is not part of the same edge as either $b$ or $c$.
(It is possible that $b$ and $c$ are part of the same cyclic edge.)
We choose the order of $\epsilon_0$ through $\epsilon_2$ as follows.

\begin{enumerate}
\item
\label{Itm:Epsilon0}
Let $\epsilon_0$ be the arc connecting $a$ to $b$.
\item
Let $\epsilon_1$ be the arc connecting $a$ to $c$ (with the end on $a$ between $p$ and the end of $\epsilon_0$).
\item
Finally, let $\epsilon_2$ be the arc connecting $b$ to $c$ (with the end on $c$ between $p$ and the end of $\epsilon_1$, and the end on $b$ not in the segment between $p$ and the end of $\epsilon_0$).
\end{enumerate}

We now check the hypotheses of \refprop{Make0-2} (including \refhyp{OneEndpoint}) for each arc $\epsilon_j$ and for each foam $\calG_j$.

\begin{claim}
\label{Clm:G_0}
 $\calG_0 = \calG[\delta]$ is $L$--essential and has an $L$--flippable edge.
 \end{claim}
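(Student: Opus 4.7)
The plan is to observe that the conclusion of this claim is, in fact, a direct restatement of hypotheses already in force. Recall that the standing assumptions at the start of \refsec{NatureReserve} for any nature reserve move, and in particular for \reflem{PairNatureReserveMoveDelta}, include that $\calG$ is an $L$--essential foam, that $e$ is an $L$--flippable edge of $\calG$, and crucially that $\calG[\delta]$ is $L$--essential and has an $L$--flippable edge. Since we defined $\calG_0 \defeq \calG[\delta]$, both statements in the claim are immediate.

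So the plan is simply to cite these hypotheses. The only ``work'' is a brief sanity check that nothing has happened between the statement of \reflem{PairNatureReserveMoveDelta} and the beginning of its proof that could invalidate these assumptions: no moves have been applied to $\calG[\delta]$, and no additional structure (carrying function, $\Nature$-list, etc.) has been imposed that would change what it means to be $L$--essential or $L$--flippable in $\calG_0$.

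There is no real obstacle here. The purpose of this claim is to serve as the base case of an inductive verification that each $\calG_j$ (for $j = 0, 1, 2, 3, 4$) is $L$--essential and has an $L$--flippable edge, which is the hypothesis needed to invoke the already-proved special case of \refprop{Make0-2} under \refhyp{OneEndpoint} (see \refrem{ProvedDistinctEdges}) at each step $\calG_j \leadsto \calG_{j+1} = \calG_j[\epsilon_j]$. The more substantive claims to come will verify the same properties for $\calG_1, \calG_2, \calG_3$, and $\calG_4$; those will require actually analysing the foams produced by each intermediate 0-2 move along $\epsilon_j$, as well as checking that $\epsilon_j$ has its endpoints on distinct edges of $\calG_j$. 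The current claim simply records the starting point.
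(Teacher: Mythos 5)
Your reading of the standing assumptions in \refsec{NatureReserve} is accurate as far as the literal statement goes, but the paper's own proof of \refclm{G_0} deliberately does \emph{not} take this route, and the reason is substantive rather than stylistic. \reflem{PairNatureReserveMoveDelta} is invoked (via \refcor{CommutingDiagram}) during the loosening stage with $\calG = \calF_i$, and at that invocation site nothing in \refsec{LooseningStage} verifies that $\calF_i[\delta]$ is $L$--essential and has an $L$--flippable edge; the hypotheses of \refprop{Make0-2} only give this for $\calF_0[\delta] = \calF[\delta]$. The claim is precisely the place where the paper discharges that obligation: it recalls that $\calG = \calF_i$ arises from the loosening stage, notes that under \refhyp{TwoEndpoints} all loosening moves before the single pair nature reserve move are augmented 2-3 moves, and then argues in two cases --- for $i = 0$ the properties of $\calF[\delta]$ are among the hypotheses of \refprop{Make0-2}, and for $i > 0$ they are propagated to $\calF_i[\delta]$ by \reflem{AugmentedOmnibus}\refitm{AugmentedLEssential} and \refitm{AugmentedLFlippable}.

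So while your one-line citation is formally consistent with the lemma as abstractly stated, it is circular relative to how the lemma is actually used: the ``hypothesis'' you cite is exactly the fact that must be established for the foams $\calF_i$ to which the lemma is applied, and if your proof were spliced in, that fact would be established nowhere in the paper. Your sanity-check paragraph (``no moves have been applied to $\calG[\delta]$'') misses that $\calG$ itself is the product of earlier loosening moves, so the content of \refclm{G_0} is an induction over those moves, not a restatement of an assumption. The fix is to replace the citation with the paper's two-case argument (base case from the hypotheses of \refprop{Make0-2}; inductive case from \reflem{AugmentedOmnibus}).
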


\begin{proof} 
Recall that $\calG = \calF_i$ where the latter is a foam created during the loosening stage (\refsec{LooseningStage}).
Under \refhyp{TwoEndpoints} we apply only one pair nature reserve move, which is part of a loosening move taking us from $\calF_i$ to $\calF_{i+1}$.
There are two cases. 
\begin{itemize}
\item Suppose that $i=0$.  
That is, this is the very first loosening move.
Then by the hypotheses of \refprop{Make0-2}, the foam $\calF_0[\delta] = \calF[\delta]$ is $L$--essential and has an $L$--flippable edge.
\item Suppose that $i > 0$.
Then all previous loosening moves were augmented 2-3 moves (since there is only one pair nature reserve move). 
Thus $\calF_i[\delta]$ is $L$--essential and has an $L$--flippable edge by \reflem{AugmentedOmnibus}\refitm{AugmentedLEssential} and \refitm{AugmentedLFlippable}.
\end{itemize}
This proves \refclm{G_0}.
\end{proof}
 
It follows from \refclm{G_0} that $\calG_j$ is $L$--essential for $j > 0$; 
each snakelet creates a new bigon face connecting two regions that were already in contact in $\calG_0$.
The 0-2 moves along $\epsilon_0$ and $\epsilon_1$ are in fact V-moves, so by two applications of \reflem{VMoveL}, the foams $\calG_1$ and $\calG_2$ each have an $L$-flippable edge.
A similar analysis shows that for each of the remaining two 0-2 moves, if an edge not entirely contained in $\calN(p)$ is $L$-flippable then its descendant (that is also not entirely contained in $\calN(p)$) is also $L$-flippable.
(The existence of an $L$--flippable edge on $\calG_3 = \calG_p[\delta]$ also follows from \refrem{Remainder}.)

\refhyp{OneEndpoint} holds for $\epsilon_0$ by construction (see \refitm{Epsilon0} above). 
For both $\epsilon_1$ and $\epsilon_2$, one endpoint is on an edge contained within $\calN(p)$ while the other is not contained within $\calN(p)$.
For $\epsilon_3$, one endpoint is on an edge incident to $p$, while the other is not.

Thus the hypotheses of \refprop{Make0-2} and \refhyp{OneEndpoint} hold for each arc.
\end{proof}

This completes the proof of \refprop{Make0-2}. \qed

\renewcommand{\UrlFont}{\tiny\ttfamily}
\renewcommand\hrefdefaultfont{\tiny\ttfamily}
\bibliographystyle{plainurl}
\bibliography{connecting_essential_triangulations_with_2-3_moves.bib}
\end{document}